\newtheorem{theorem}{Theorem}[section]
\newtheorem{lemma}[theorem]{Lemma}
\newtheorem{proposition}[theorem]{Proposition}
\newtheorem{corollary}[theorem]{Corollary}
\theoremstyle{definition}
\newtheorem{definition}[theorem]{Definition}
\theoremstyle{remark}
\newtheorem{remark}[theorem]{Remark}
\numberwithin{equation}{section}
\newcommand{\abs}[1]{\left\lvert#1\right\rvert}
\newcommand{\norm}[1]{\abs{\abs{#1}}}
\newcommand{\co}{\colon}
\newcommand{\defeq}{\vcentcolon=}
\DeclarePairedDelimiterX{\inp}[2]{\langle}{\rangle}{#1, #2}
\newcommand\boldbox[1]{\mbox{\boldmath{\boxed{\ensuremath #1}} \unboldmath}}
\DeclareMathOperator{\st}{{\bf st}}
\DeclareMathOperator{\Prob}{{\bf Prob}}
\newcommand{\starint}{{\prescript{\ast}{}\int}}
\newcommand{\mbp}{{\mathbb P}}
\newcommand{\mbx}{{\mathbf X}}
\newtheorem*{rep@theorem}{\rep@title}
\newcommand{\newreptheorem}[2]{%
\newenvironment{rep#1}[1]{%
 \def\rep@title{#2 \ref{##1}}%
 \begin{rep@theorem}}%
 {\end{rep@theorem}}}
\newcommand\mystrut{\rule{0pt}{15pt}}
\newcommand\mystrutt{\rule{0pt}{20pt}}
\begin{document}

\title{Limiting Probability Measures}


\author{Irfan Alam}
\address{Department of Mathematics, Louisiana State University, Baton Rouge, LA, USA.}
\curraddr{}
\email{irfanalamisi@gmail.com}
\thanks{}


\subjclass[2010]{Primary 28E05, Secondary 28C20, 28C15, 03H05, 26E35}
\keywords{Spherical integrals, Gaussian measures, Nonstandard analysis}

\date{}

\dedicatory{}

\commby{}

\begin{abstract}
The coordinates along any fixed direction(s), of points on the sphere $S^{n-1}(\sqrt{n})$, roughly follow a standard Gaussian distribution as $n$ approaches infinity. We revisit this classical result from a nonstandard analysis perspective. With that goal, we first set up a nonstandard theory for the asymptotic behavior of integrals over varying domains in general. We obtain a new proof of the Riemann-Lebesgue lemma as a by-product of this theory. We then define an appropriate surface area measure on a hyperfinite-dimensional sphere and show that for any function $f: \mathbb{R}^k \rightarrow \mathbb{R}$ with finite Gaussian moment of an order larger than one, its expectation is given by an integral over this sphere. Some useful inequalities between high-dimensional spherical means of $f$ and its Gaussian mean are obtained in order to complete the above proof. A review of the requisite nonstandard analysis is provided. 
\end{abstract}

\maketitle



\begin{section}{Introduction}
Gaussian measures have been mathematically connected with the uniform surface area measures on high-dimensional spheres since at least the time of Poincar\'e, who observed in \cite{Poincare} that if $n$ real numbers are randomly chosen under the constraint that their sum of squares equals $n$ (this is equivalent to choosing a random vector on $S^{n-1}(\sqrt{n})$, the sphere in $\mathbb{R}^n$ centered at the origin, of radius $\sqrt{n}$), then as $n \rightarrow \infty$, the probability distribution of the first number converges to that of a standard Gaussian random variable (that is, with zero mean and covariance equaling one). Considering works on the kinetic theory of gases in physics, this connection goes back another century (we briefly outline this connection with Physics in Appendix \ref{appendix}).

For any sphere $S$ centered at the origin in a Euclidean space, there is a unique orthogonal transformation invariant probability measure $\bar{\sigma}_S$ (we will omit the subscript when the sphere under consideration is clear from context). We refer to Matilla \cite[Chapter 3]{spherical_measures} for basic properties of spherical surface area measures. For each $k \in \mathbb{N}$ and $n \in \mathbb{N}_{\geq k}$, let $\pi^{(n)}_k : \mathbb{R}^n \rightarrow \mathbb{R}^k$ denote the projection on to the first $k$ coordinates under the standard basis (we will omit the superscript when the dimension is clear from context). For a Borel set $B \subseteq \mathbb{R}^k$, we write $$\bar{\sigma}_{S^{n-1}(\sqrt{n})}(B) \defeq \bar{\sigma}_{S^{n-1}(\sqrt{n})}[S^{n-1}(\sqrt{n}) \cap (\pi^{(n)}_k)^{-1}(B)].$$ 

In the same spirit, we identify each measurable function $f: \mathbb{R}^k \rightarrow \mathbb{R}$ with a function on $\mathbb{R}^n$ by composing it with the projection $\pi^{(n)}_k$. 

We let $\mu_{(k)}$ denote the standard Gaussian measure on $\mathbb{R}^k$ (again, omitting the subscript when the dimension is clear). With these conventions, we may write Poincar\'e's observation succinctly as
\begin{align}\label{Poincare's first limit}
    \lim_{n \rightarrow \infty} \bar{\sigma}_{S^{n-1}(\sqrt{n})}(B) = \mu(B) \text{ for all Borel sets } B \subseteq \mathbb{R}.
\end{align}

By standard measure theory, it is not difficult to see that the above can be rephrased in a more general form as follows (as discussed above, the integral on the left side will be understood as that of the function $f \circ \pi^{(n)}_k$ for all $n \in \mathbb{N}_{>k}$).

\begin{theorem}[Poincar\'e, \cite{Poincare}]\label{Poincare's theorem}
For all bounded measurable functions $f: \mathbb{R}^k \rightarrow \mathbb{R}$, we have 
\begin{align}\label{Poincare limit}
\lim_{n \rightarrow \infty} \int_{S^{n-1}(\sqrt{n})} f d\bar{\sigma} = \int_{\mathbb{R}^k} f d\mu.    
\end{align}
\end{theorem}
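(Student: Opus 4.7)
The plan is to first upgrade \eqref{Poincare's first limit} to a stronger statement: the pushforward measures $(\pi^{(n)}_k)_* \bar{\sigma}_{S^{n-1}(\sqrt{n})}$ on $\mathbb{R}^k$ converge in total variation to $\mu$. Once this is established, convergence of integrals of bounded measurable $f$ follows immediately from a total variation estimate, so the bulk of the work lies in the first step.

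For the first step, I would compute the density of the pushforward $(\pi^{(n)}_k)_* \bar{\sigma}_{S^{n-1}(\sqrt{n})}$ on $\mathbb{R}^k$. By the rotational invariance of the sphere, integrating out the last $n - k$ coordinates shows that for $n > k + 2$ this pushforward is absolutely continuous with density
\[
p_{k,n}(x) = \frac{\Gamma(n/2)}{\Gamma((n-k)/2) \, (\pi n)^{k/2}} \left( 1 - \frac{\|x\|^2}{n} \right)^{(n-k-2)/2}
\]
for $\|x\|^2 < n$, and zero otherwise. A direct calculation using $(1 - a/n)^n \to e^{-a}$ together with the Gamma asymptotic $\Gamma(n/2)/\Gamma((n-k)/2) \sim (n/2)^{k/2}$ yields the pointwise limit $p_{k,n}(x) \to (2\pi)^{-k/2} e^{-\|x\|^2/2} \eqdef p(x)$ on $\mathbb{R}^k$. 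Since the $p_{k,n}$ are probability densities converging pointwise to the probability density $p$, Scheff\'e's lemma upgrades this to $L^1$ convergence of densities, which is exactly total variation convergence of the corresponding probability measures on $\mathbb{R}^k$.

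For the second step, given a bounded measurable $f \co \mathbb{R}^k \to \mathbb{R}$ with $\|f\|_\infty \leq M$,
\[
\left| \int_{S^{n-1}(\sqrt{n})} f \, d\bar{\sigma} - \int_{\mathbb{R}^k} f \, d\mu \right| = \left| \int_{\mathbb{R}^k} f (p_{k,n} - p) \, dx \right| \leq M \int_{\mathbb{R}^k} |p_{k,n} - p| \, dx,
\]
and the right-hand side tends to $0$ by the first step. This establishes \eqref{Poincare limit}.

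The main obstacle I anticipate is the bookkeeping in the first step: one has to carefully track the Gamma function ratio to verify that it supplies the correct normalizing constant $(2\pi)^{-k/2}$ in the limit, and derive the density formula for $p_{k,n}$ cleanly using the polar slicing of the sphere. Everything else, including Scheff\'e's lemma and the total-variation estimate giving convergence for bounded measurable $f$, is essentially automatic once the pointwise convergence of densities has been secured.
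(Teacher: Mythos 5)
Your proof is correct. It is essentially the same computational route as the paper's classical proof in Section \ref{computational proof} (which the paper presents as a secondary approach alongside its primary nonstandard proof in Section \ref{nonstandard Poincare}): both rely on the explicit density of the projection $(\pi^{(n)}_k)_* \bar{\sigma}_{S^{n-1}(\sqrt{n})}$, obtained there via Sengupta's disintegration formula, and both establish pointwise convergence of that density to the standard Gaussian density using $(1 - a/n)^{n/2} \to e^{-a/2}$ and Stirling-type asymptotics for $\Gamma(n/2)/\Gamma((n-k)/2)$. The one genuine difference is the endgame: the paper invokes the dominated convergence theorem, bounding the integrand by a multiple of $\abs{f(x)} e^{-\norm{x}^2/4}$, while you invoke Scheff\'e's lemma to upgrade pointwise convergence of the probability densities to $L^1$ convergence. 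Your route buys something slightly stronger for free -- total variation convergence of the pushforward measures, not merely convergence of integrals against bounded test functions -- and it avoids having to exhibit a dominating function. The paper's route with DCT is no harder here, but (as Remark \ref{remark on dct not working} notes) it does not extend to unbounded $f$ without more work, which is what motivates the paper's Section \ref{section 3}. Be aware that the paper's \emph{featured} proof (Section \ref{nonstandard Poincare}) avoids all density calculations entirely by realizing $\bar\sigma$ as a pushforward of the Gaussian measure through $x \mapsto \sqrt{N}x/\norm{x}$ and appealing to the weak law of large numbers; that nonstandard argument is genuinely different in spirit from yours.
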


Similar ideas were later used by L\'evy \cite{Levy} to do infinite dimensional analysis, and then by Wiener \cite{Wiener} to construct Brownian motion. McKean \cite{McKean} surveyed most of the relevant work from that period. Cutland and Ng explored these themes using nonstandard analysis (which provides the language of hyperfinite dimensional spheres) in \cite{Cutland-Ng}. They gave a new construction of the Wiener measure using the nonstandard machinery. 

The current paper may be considered a sequel to \cite{Cutland-Ng} in some sense. Indeed one of our aims is to view the above classical result (Theorem \ref{Poincare's theorem}) as a statement about Loeb integrals on hyperfinite dimensional spheres, and obtain the same result for a larger class of functions. Toward that end, we give a new nonstandard proof of Poincar\'e's theorem in Section \ref{nonstandard Poincare}. A novel feature of this proof is that it does not require any explicit integral calculations -- it follows from straightforward applications of the weak law of large numbers and the definition of the uniform surface area measure on a sphere as a pushforward of a Gaussian measure. In Section \ref{section 3}, we also establish a nonstandard approach of extending such results from bounded measurable functions to other classes of functions. The general framework described in Sections \ref{section 2} and \ref{section 3} may be thought of as an invitation to apply nonstandard analysis to other asymptotic problems in probability and measure theory. One such application is carried out in Alam \cite{Alam2} to generalize recent works of Sengupta \cite{Sengupta} and Peterson--Sengupta \cite{Sengupta-Peterson} that connect Gaussian Radon transforms with limiting spherical integrals. This generalization is the topic of one of the author's ongoing projects. 

We also give a classical standard proof of Theorem \ref{Poincare's theorem} in Section \ref{computational proof} -- it follows by dominated convergence theorem once the integral over the sphere is ``disintegrated'' properly (for example, using Sengupta \cite[Proposition 4.1]{Sengupta}). As pointed out in Remark \ref{remark on dct not working}, this proof of Theorem \ref{Poincare's theorem} does not immediately generalize to work for an arbitrary $\mu$--integrable function. The nonstandard framework of Section 3 allows one to get conditions (see Theorems \ref{sufficient condition for integrability} and \ref{main equivalence}) under which a result of the type of Theorem \ref{Poincare's theorem} for bounded measurable functions (over general domains) can be extended to unbounded functions. Though we do not use this terminology, the framework in Section \ref{section 3} is similar to the framework of \textit{graded probability spaces}, as in Hoover \cite{Hoover-IAS} and Keisler \cite{Keisler-hyperfinite}.

Aside from its application to spherical integrals, the approach of Section \ref{section 4} is potentially useful in many other situations in which limits of integrals may be studied. A new proof of the Riemann-Lebesgue Lemma is provided (see Theorem \ref{Riemann-Lebesgue}) as an example of its use. Finally, in order to verify the sufficient conditions from Section 4 in the case of spherical integrals, we also prove some inequalities between spherical means and $L^p(\mathbb{R}^k, \mu)$ norms of functions on $\mathbb{R}^k$ (see Theorem \ref{p>1} and Corollary \ref{p>1 part 2}). Thus, the main results of this paper can be divided into three types:
\begin{itemize}
    \item Results viewing the limiting behavior of integrals over varying abstract domains as a single integral over a nonstandard domain. 
    
    \item Inequalities between spherical integrals and Gaussian integrals.
    \item Applications of the results of the above types to systematically generalize Theorem \ref{Poincare's theorem} on limiting spherical integrals to a bigger class of functions.
\end{itemize}

\subsection{Summary and motivation of our key results} Recall that for a Borel measurable function $f: \mathbb{R}^k \to \mathbb{R}$, we are interested in $$\lim_{n \rightarrow \infty} \int_{S^{n-1}(\sqrt{n})} f(x_1, \ldots, x_k) d\bar{\sigma}(x_1, \ldots, x_n),$$ where we view $f$ as a function on $\mathbb{R}^n$ by first projecting the input into the first $k$ coordinates. Assuming Theorem \ref{Poincare's theorem}, if $f$ is bounded, then we know from \eqref{Poincare limit} that this limit is equal to the expected value of $f$ with respect to the standard Gaussian measure $\mu$ on $\mathbb{R}^k$. Since we are assuming the limiting result $\eqref{Poincare limit}$ for bounded functions, we have (using $\mathbbm{1}_B$ to denote the indicator function of a set $B$) the following for a possibly unbounded Borel measurable function $f: \mathbb{R}^k \to \mathbb{R}$.
\begin{align}\label{double limits}
     \lim_{m \rightarrow \infty} \lim_{n \rightarrow \infty} \int_{S^{n-1}(\sqrt{n})} f\mathbbm{1}_{\abs{f} \leq m} d\bar{\sigma} &= \lim_{m \rightarrow \infty} \int_{\mathbb{R}^k} f\mathbbm{1}_{\abs{f} \leq m} d\mu =  \int_{\mathbb{R}^k} f d\mu.
\end{align}

However, we wanted to find $\lim_{n \rightarrow \infty} \int_{S^{n-1}(\sqrt{n})} f d\bar{\sigma}$, which (assuming that $f$ is integrable over $S^{n-1}(\sqrt{n})$ for large $n \in \mathbb{N}$) is the same as the following: $$\lim_{n \rightarrow \infty} \lim_{m \rightarrow \infty} \int_{S^{n-1}(\sqrt{n})} f\mathbbm{1}_{\abs{f} \leq m} d\bar{\sigma}.$$

Thus, in order to go from a result on bounded functions to a result on more general functions, we want to be able to switch the order of limits in \eqref{double limits}. However, there is no general theory of switching double limits.

From the point of view of nonstandard analysis (which we shall review in the next section), the situation is simpler since the large-$n$ behavior of any sequence is captured in the values attained by the nonstandard extension of that sequence at hyperfinite indices. For a hyperfinite $N > \mathbb{N}$, the sphere $S^{N-1}(\sqrt{N})$ inherits a finitely additive internal probability measure from the sequence $(S^{n-1}(\sqrt{n}), \bar{\sigma}_{S^{n-1}(\sqrt{n})})_{n \in \mathbb{N}}$. The $N^{\text{th}}$ term in the nonstandard extension of the sequence $\left( \int_{S^{n-1}(\sqrt{n})} f d\bar{\sigma}\right)_{n \in \mathbb{N}}$ is then the ${^*}$--integral of ${^*}f$ with respect to this internal measure. It turns out that the limiting integral for a general measurable function $f\co \mathbb{R}^k \to \mathbb{R}$ exists (knowing that it exists and is equal to the Gaussian mean for bounded measurable functions) if ${^*}f$ is $S$--integrable over $S^{N-1}(\sqrt{N})$. In a more abstract setting, Theorem \ref{sufficient condition for integrability} essentially tells us that we can switch these limits if the tail double-limit $\lim_{m \rightarrow \infty} \lim_{n \rightarrow \infty} \int_{S^{n-1}(\sqrt{n})} \abs{f} \mathbbm{1}_{\abs{f} > m} d\bar{\sigma}$ is zero. This condition of the tail double-limit being zero is just a standard reformulation of one of the equivalent conditions that ensure the $S$-integrability of ${^*}f$ over $S^{N-1}(\sqrt{N})$ (see \ref{S2} of Theorem \ref{S-integrable TFAE}). 

A partial converse of the above result holds for nonnegative functions, which is covered in Theorem \ref{main equivalence}. Thus the set of all nonnegative functions for which the limit of spherical integrals is equal to the Gaussian integral is precisely the set of nonnegative functions for which the above tail double-limit is zero. While Theorems \ref{sufficient condition for integrability} and \ref{main equivalence} come out of nonstandard measure theoretic considerations, we paraphrase a standard version for convenience as follows:

\begin{theorem}\label{Summary of main equivalence theorem}
Let $(E, \mathcal{E})$ be a measure space. Let $k \in \mathbb{N}$ and for each $n \in \mathbb{N}_{> k}$, suppose $\Omega_n \subseteq E^{n'}$ for some $n' \in \mathbb{N}_{> k}$. Suppose that $\mathcal{F}_n$, the given sigma-algebra on $\Omega_n$, is induced by the product sigma-algebra $\mathcal{E}_{n'}$ on $E^{n'}$. Let $(\Omega_n, \mathcal{F}_n, \nu_n)$ be a sequence of Borel probability spaces. Let $\mathbb{P}$ be a probability measure on $(E^k, \mathcal{E}_k)$ such that
$\lim_{n \rightarrow \infty} \nu_n (B) = \mathbb{P}(B) \text{ for any } B \in \mathcal{E}_k$. Then \eqref{good double limit intro} implies \eqref{good limit inro} below:
\begin{enumerate}
      \item\label{good double limit intro} A function $f$ is integrable on $(\Omega_n, \nu_n)$ for all large $n \in \mathbb{N}$, and furthermore:
    $$ \lim_{m \rightarrow \infty} \lim_{n \rightarrow \infty} \int_{\Omega_n \cap \{\abs{f} \geq m\}} \abs{f} d{\nu}_n = 0.$$
    
      \item\label{good limit inro} The function $f\co E^k \rightarrow \mathbb{R}$ is $\mathbb{P}$-integrable and $\lim_{n \rightarrow \infty} \int_{\Omega_n} f d\nu_n = \int_{E^k} f d\mathbb{P}$.
\end{enumerate}

Furthermore, if $f$ is assumed to be nonnegative, then the above conditions \eqref{good double limit intro} and \eqref{good limit inro} are equivalent. 
\end{theorem}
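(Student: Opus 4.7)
The plan is to reduce to the bounded case via truncations $f_m \defeq f\mathbbm{1}_{\{\abs{f}\leq m\}}$ and then to control the tails carefully. The enabling observation is that the assumed setwise convergence $\nu_n(B) \to \mathbb{P}(B)$ for $B \in \mathcal{E}_k$ automatically upgrades to $\lim_n \int g\, d\nu_n = \int g\, d\mathbb{P}$ for every bounded $\mathcal{E}_k$-measurable $g\co E^k \to \mathbb{R}$. This is a standard fact: by linearity the conclusion holds for simple functions, and a uniform approximation of a bounded measurable $g$ by simple functions (using that the total masses are all $1$) closes the gap between $\int g\, d\nu_n$ and $\int g\, d\mathbb{P}$ up to an arbitrarily small error that is independent of $n$.

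For \eqref{good double limit intro} $\Rightarrow$ \eqref{good limit inro}, fix $\varepsilon > 0$ and use \eqref{good double limit intro} to pick $m$ so large that $\limsup_{n} \int \abs{f}\mathbbm{1}_{\{\abs{f}>m\}}\, d\nu_n < \varepsilon$. Decomposing
$$\int f\, d\nu_n \;=\; \int f_m\, d\nu_n \;+\; \int f\mathbbm{1}_{\{\abs{f}>m\}}\, d\nu_n,$$
the first summand tends to $\int f_m\, d\mathbb{P}$ by the bounded-function lemma above, while the absolute value of the second is eventually smaller than $\varepsilon$. Integrability of $f$ under $\mathbb{P}$ drops out as a by-product: the estimate $\limsup_n \int \abs{f}\, d\nu_n \leq m+\varepsilon$ combined with $\int \abs{f_m}\, d\nu_n \to \int \abs{f_m}\, d\mathbb{P}$ yields $\int \abs{f_m}\, d\mathbb{P} \leq m+\varepsilon$, and monotone convergence in $m$ then gives $\int \abs{f}\, d\mathbb{P} < \infty$. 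Once $f$ is $\mathbb{P}$-integrable, dominated convergence also yields $\int \abs{f}\mathbbm{1}_{\{\abs{f}>m\}}\, d\mathbb{P} \to 0$, and a standard triangle-inequality argument delivers $\lim_n \int f\, d\nu_n = \int f\, d\mathbb{P}$.

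For the converse, assume $f \geq 0$. For every fixed $m$ the bounded-function lemma yields $\int f\mathbbm{1}_{\{f\leq m\}}\, d\nu_n \to \int f\mathbbm{1}_{\{f\leq m\}}\, d\mathbb{P}$, while the hypothesis \eqref{good limit inro} gives $\int f\, d\nu_n \to \int f\, d\mathbb{P}$. Subtracting the two limits produces $\lim_n \int f\mathbbm{1}_{\{f>m\}}\, d\nu_n = \int f\mathbbm{1}_{\{f>m\}}\, d\mathbb{P}$; letting $m\to\infty$, this vanishes by dominated convergence on $(E^k,\mathbb{P})$, which is precisely \eqref{good double limit intro}.

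I expect the main obstacle to be not any single estimate but rather bookkeeping: producing $\mathbb{P}$-integrability of $f$ from \eqref{good double limit intro} without circular reasoning, and recognizing exactly where nonnegativity is essential for the converse. In the signed case the subtraction above only controls $\lim_n \int f\mathbbm{1}_{\{\abs{f}>m\}}\, d\nu_n$ and not $\lim_n \int \abs{f}\mathbbm{1}_{\{\abs{f}>m\}}\, d\nu_n$, and these can differ drastically because of cancellations on $\{f>m\}$ versus $\{f<-m\}$; this is the gap the paper's nonstandard $S$-integrability framework is designed to bridge, and one should not expect to close it by a purely standard reshuffling.
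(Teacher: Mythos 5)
Your proof is correct in substance but takes a genuinely different route from the paper. The paper derives this theorem (which it explicitly calls a ``standard paraphrase'') from its nonstandard Theorems \ref{sufficient condition for integrability} and \ref{main equivalence}, whose proofs rest on the equivalent characterizations of $S$-integrability in Theorem \ref{S-integrable TFAE}: the direction \eqref{good double limit intro} $\Rightarrow$ \eqref{good limit inro} is obtained by transferring the double-limit hypothesis into $S$-integrability of ${^*}f$ on the hyperfinite-indexed space $\Omega_N$, and the converse for nonnegative $f$ uses the nonstandard characterization of $\limsup$ together with Loeb integrability. You avoid all of that: you reduce to the bounded case via a uniform-approximation lemma (setwise convergence of probability measures plus total mass one gives convergence of integrals of bounded measurable functions), then run a truncation-and-triangle-inequality argument for the forward implication and a subtraction-of-limits argument for the converse. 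Your route is self-contained and elementary; the paper's route buys the additional identity $\st(\alpha_{f,N}) = \int_{\Omega_N} \st({^*}f)\, dL\nu_N$ used elsewhere. Your closing diagnosis of why nonnegativity is essential for the converse --- the subtraction only controls $\int f\mathbbm{1}_{\{\abs{f}>m\}}\,d\nu_n$, not $\int \abs{f}\mathbbm{1}_{\{\abs{f}>m\}}\,d\nu_n$, and cancellation between $\{f>m\}$ and $\{f<-m\}$ can hide a large tail --- is exactly right and matches the role that $f=\abs{f}$ plays in the paper's use of Theorem \ref{S-integrable TFAE}\ref{S4}.

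One step needs tightening. To obtain $\mathbb{P}$-integrability of $f$ you assert ``$\int\abs{f_m}\,d\mathbb{P} \leq m+\varepsilon$'' and then invoke ``monotone convergence in $m$.'' As written this is vacuous: $\abs{f_m}\leq m$ makes the bound trivial, and $m+\varepsilon\to\infty$, so monotone convergence yields nothing. The repair is to decouple the truncation index from the index that produces a \emph{fixed} finite bound. Fix $m_0$ with $\limsup_n \int\abs{f}\mathbbm{1}_{\{\abs{f}>m_0\}}\,d\nu_n < 1$; this gives $L := \limsup_n\int\abs{f}\,d\nu_n \leq m_0+1 < \infty$. Then for every truncation level $M$, monotonicity gives $\int\abs{f_M}\,d\nu_n \leq \int\abs{f}\,d\nu_n$, so letting $n\to\infty$ (bounded-function lemma on the left, limsup bound on the right) yields $\int\abs{f_M}\,d\mathbb{P} \leq L$ uniformly in $M$; now monotone convergence as $M\to\infty$ gives $\int\abs{f}\,d\mathbb{P}\leq L$. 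This is clearly what you intended, but keeping the two indices distinct is essential.
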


The above theorem can also be interpreted more classically as a statement involving uniform integrability. While we do not focus on this aspect, it is interesting to emphasize  that the nonstandard arguments using $S$--integrability thus encompass standard uniform integrability techniques.   

In the case when $\Omega_n$ are the spheres $S^{n-1}(\sqrt{n})$, we verify the above double limit condition for all functions on $\mathbb{R}^k$ with a finite $(1 + \epsilon)$--Gaussian moment, where $\epsilon$ is any positive real number. This allows us to extend the result in Theorem \ref{Poincare's theorem} to all such functions (see Theorem \ref{most general}). The main step in this verification is an inequality (see Theorem \ref{p>1} and Corollary \ref{p>1 part 2}) between sufficiently high-dimensional spherical means and $L^p(\mathbb{R}^k, \mu)$ norms of functions on $\mathbb{R}^k$, which we summarize as follows:

\begin{theorem}\label{p>1 introduction}
For each $p \in \mathbb{R}_{>1}$, there is a constant $C_p \in \mathbb{R}_{>0}$ such that the following holds.
\begin{align}
    \int_{S^{n-1}(\sqrt{n})} \abs{g} d\bar{\sigma}_n \leq C_p[\mathbb{E}_\mu(\abs{g}^p)]^{\frac{1}{p}} \text{ for all } g \in L^p(\mathbb{R}^k, \mu) \text{ and } n \in \mathbb{N}_{> 4(k+1)q},
\end{align}
where $q \in \mathbb{R}_{>0}$ is such that $\frac{1}{p} + \frac{1}{q} = 1$.

Furthermore, we may replace the constant $C_p$ in the above inequality by a real number as close to $1$ as desired if $n$ is taken large enough (this large $n$ depends only on $p \in \mathbb{R}_{>1}$ and the desired distance of the constant from $1$).
\end{theorem}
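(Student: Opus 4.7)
The strategy is to realize the spherical integral as a weighted Gaussian integral and then reduce the inequality to a single estimate via H\"older. When $(X_1,\ldots,X_n)$ is drawn from $\bar\sigma_{S^{n-1}(\sqrt{n})}$, the projection onto the first $k$ coordinates has density on $\mathbb{R}^k$ given by
$$\rho_{n,k}(x) = \frac{\Gamma(n/2)}{\Gamma((n-k)/2)(n\pi)^{k/2}} \left(1 - \frac{|x|^2}{n}\right)^{(n-k-2)/2} \mathbbm{1}_{\{|x|^2 < n\}},$$
via the classical disintegration of uniform spherical measure (the proposition of Sengupta cited in the excerpt). Writing $\phi$ for the standard Gaussian density on $\mathbb{R}^k$ and $w_{n,k} := \rho_{n,k}/\phi$, the spherical integral becomes $\int_{\mathbb{R}^k} |g|\, w_{n,k}\, d\mu$, and H\"older's inequality with conjugate exponents $p,q$ yields
$$\int_{S^{n-1}(\sqrt{n})} |g|\, d\bar\sigma_n \leq \|g\|_{L^p(\mathbb{R}^k,\mu)} \cdot \left(\int_{\mathbb{R}^k} w_{n,k}^q\, d\mu\right)^{1/q}.$$
The whole theorem, including its ``furthermore'' refinement, thus reduces to showing that this last $L^q(\mu)$-norm is bounded by a constant $C_p$ whenever $n > 4(k+1)q$, with limit $1$ as $n \to \infty$.

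To estimate $\int w_{n,k}^q\, d\mu$, I would pass to polar coordinates and apply the elementary bound $(1-r^2/n) \leq e^{-r^2/n}$, valid on $r^2 < n$. The algebraic factor is then dominated by $\exp(-q(n-k-2)r^2/(2n))$, and once combined with $e^{(q-1)r^2/2}$ coming from $\phi^{1-q}$ this gives a net Gaussian weight $\exp(-\gamma_n r^2)$ with $\gamma_n = (n - q(k+2))/(2n)$. The hypothesis $n > 4(k+1)q$ makes $\gamma_n$ bounded below by a positive constant (approaching $1/2$ from below as $n \to \infty$), so the radial integral $\omega_k \int_0^{\sqrt{n}} r^{k-1} e^{-\gamma_n r^2}\, dr$ is uniformly controlled. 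Combining this with the prefactor $[\Gamma(n/2)/\Gamma((n-k)/2)]^q (n\pi)^{-kq/2} (2\pi)^{k(q-1)/2}$, which stays bounded by the monotonicity of log-$\Gamma$ and Stirling, yields a bound $B_n$ on $\int w_{n,k}^q\, d\mu$ that depends only on $p$ (and $k$) for $n > 4(k+1)q$, establishing the existence of the claimed $C_p$.

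For the ``furthermore'' claim, I would show $B_n \to 1$ explicitly as $n \to \infty$. The pointwise convergence $(1-|x|^2/n)^{(n-k-2)/2} \to e^{-|x|^2/2}$ and the Stirling asymptotic $\Gamma(n/2)/\Gamma((n-k)/2) \sim (n/2)^{k/2}$ together force $\gamma_n \to 1/2$ and $[\Gamma(n/2)/\Gamma((n-k)/2)]^q (n\pi)^{-kq/2} (2\pi)^{k(q-1)/2} \to (2\pi)^{-k/2}$, so after using $\omega_k \int_0^\infty r^{k-1} e^{-r^2/2}\, dr = (2\pi)^{k/2}$ the bound $B_n$ converges to $(2\pi)^{-k/2} \cdot (2\pi)^{k/2} = 1$. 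The main obstacle is executing this limit cleanly: one needs either dominated convergence on the radial integral (tracking how the upper limit $\sqrt{n}$ interacts with the varying exponent $\gamma_n$) or explicit Stirling bounds with error terms, since $\|w_{n,k}\|_{L^q(\mu)}$ must approach exactly $1$ rather than some larger constant. A sharper $(1-x)^m$ estimate, such as $(1-x)^m \leq e^{-mx - mx^2/2}$ on a suitable range, may be required to obtain tight enough control near $|x| \approx \sqrt{n}$ for the second claim, and one may also wish to split the radial integral into a Gaussian bulk and a thin shell whose contribution vanishes in the limit.
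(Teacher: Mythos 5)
Your approach is correct and genuinely more streamlined than the paper's. Both proofs start from Sengupta's disintegration formula, which identifies the pushforward of $\bar\sigma_n$ to $\mathbb{R}^k$ with the density $\rho_{n,k}$ you wrote. From there, however, the paths diverge. The paper splits the ball $B_k(\sqrt{n})$ into a core $\{\|x\|^2\le n/t\}$ and a thin shell $\{n/t<\|x\|^2<n\}$; the core is handled with Jensen's inequality after bounding the factor $(1-\|x\|^2/n)^{-(k+2)/2}$ by $(t/(t-1))^{(k+2)/2}$, the shell is handled with H\"older against Lebesgue measure, and the shell contribution is sent to zero via a separate lemma (Corollary~\ref{equator 2}). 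You instead factor out the Gaussian density once, write $d\bar\sigma_n=w_{n,k}\,d\mu$, and apply H\"older globally against $\mu$, reducing everything to bounding the single quantity $\|w_{n,k}\|_{L^q(\mu)}$. Your route avoids the domain split entirely, and it also makes the ``furthermore'' claim transparent, since the whole statement becomes $\|w_{n,k}\|_{L^q(\mu)}\to 1$. (The paper's footnote records a referee's suggestion close in spirit to yours, though still restricted to the shell region; your version is cleaner still.)

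One further remark: the worry in your final paragraph is unfounded. The elementary bound $(1-\|x\|^2/n)^{(n-k-2)/2}\le e^{-(n-k-2)\|x\|^2/(2n)}$, combined with extending the radial domain to all of $\mathbb{R}^k$, yields the completely explicit upper bound
$$\int_{\mathbb{R}^k} w_{n,k}^q\,d\mu \;\le\; A_n^q\,(2\pi)^{-k/2}\bigl(\pi/\gamma_n\bigr)^{k/2},\qquad A_n:=\frac{\Gamma(n/2)\,2^{k/2}}{\Gamma((n-k)/2)\,n^{k/2}},$$
with no truncation error to track. Since $A_n\to 1$ (Stirling) and $\gamma_n=\frac{1}{2}-\frac{q(k+2)}{2n}\to\frac{1}{2}$, the right side converges to $(2\pi)^{-k/2}(2\pi)^{k/2}=1$; no sharper estimate of $(1-x)^m$, no shell-versus-bulk decomposition, and no dominated convergence argument on a varying domain are needed. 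The condition $n>4(k+1)q$ just ensures $\gamma_n\ge\frac{1}{2}-\frac{k+2}{8(k+1)}\ge\frac{1}{4}$, giving uniform boundedness of the constant over the stated range. So you can state your proof with confidence; it works exactly as sketched.
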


\subsection{Structure of the paper}
The next section presents the background from nonstandard analysis that we will use. We attempt to present the material in a way that requires very little formal training in Logic. The interested reader is directed toward the numerous good books on nonstandard analysis for more background (see \cite{Albeverio}, \cite{Cutland}, and \cite{Robinson}). 

Section \ref{section 2} contains a nonstandard proof of Theorem \ref{Poincare's theorem} (carried out in Section \ref{nonstandard Poincare}), which is prefaced by some basic nonstandard measure theory that we will use and a discussion on spherical measures (alongwith their nonstandard counterparts). 

Section \ref{section 3} continues the theme by studying sequences of abstract measure spaces for which a result of the type of Poincar\'e is known. It gives conditions under which such results hold for more general functions, allowing us to express the limiting behavior of certain integrals by a Loeb integral on a single limiting measure space. An application that yields a new proof of the Riemann-Lebesgue lemma is carried out in Theorem \ref{Riemann-Lebesgue}.

In Section \ref{section 4}, we apply the results of Section 3 to the case of high-dimensional spheres, and obtain a generalization of the classical result on limits of spherical integrals to a large class of Gaussian integrable functions (see Theorem \ref{most general}). Toward that end, we also obtain some useful inequalities between high-dimensional spherical means and Gaussian means (see Theorem \ref{p>1} and Corollary \ref{p>1 part 2}). 
\end{section}

\begin{section}{A non-logical crash course in nonstandard analysis}
\subsection{Basic nonstandard analysis}
There are many approaches to nonstandard analysis, eight of which were described in Benci--Forti--di Nasso \cite{eightfold}. We follow the superstructure approach, as done in Albeverio et al. \cite{Albeverio}. Roughly, a nonstandard extension of a set $\mathfrak{X}$ (consisting of atoms or urelements; that is, we view each element of $X$ as an ``individual'' without any structure, set-theoretic or otherwise) is a superset ${^*}\mathfrak{X}$ that preserves the ``first-order'' properties of $\mathfrak{X}$. That is, a property which is expressible using finitely many symbols without quantifying over any collections of subsets of $\mathfrak{X}$ is true if and only if the same property is true of ${^*}\mathfrak{X}$. This is called the \textbf{transfer principle} (or just \textit{transfer} for brevity). The set ${^*}\mathfrak{X}$ should contain, as a subset, ${^*}\mathfrak{Y}$ for each $\mathfrak{Y} \subseteq X$. 

Like subsets, other mathematical objects defined on $\mathfrak{X}$ also have extensions. So, a function $f: \mathfrak{X} \rightarrow \mathfrak{Y}$ extends to a map ${^*}f: {^*}\mathfrak{X} \rightarrow {^*}\mathfrak{Y}$, and relations on $\mathfrak{X}$ extend to relations on ${^*}\mathfrak{X}$. Hence there is a binary relation ${^*}<$ on ${^*}\mathbb{R}$, which we still denote by $<$ (an abuse of notation that we will frequently make), and which is the same as the usual order when restricted to $\mathbb{R}$. Thus, ${^*}\mathbb{R}$ is an ordered field of characteristic $0$. Indeed all (the infinitely many) axioms for ordered fields of characteristic $0$ hold for it by transfer. The symbols in a sentence such as $``\forall x > 0 ~\exists y (x = y^2)"$ (which is expressing the proposition that each positive number has a square root) have new meanings in the nonstandard universe: by $``<"$, we are now interpreting the extension of the order on $\mathbb{R}$. Yet the sentence is true in ${^*}\mathbb{R}$ by transfer! 

A non-first-order property of $\mathfrak{X}$ may not transfer to ${^*}\mathfrak{X}$. For instance, we shall see (cf. Proposition \ref{existence of infinites}) that any ``non-trivial'' extension of $\mathbb{R}$ contains \textit{infinite} elements (that is, those that are larger than all real numbers in absolute value), as well as \textit{infinitesimal} elements (that is, those that are smaller than all positive real numbers in absolute value). Thus, the Archimedean property of $\mathbb{R}$ does not transfer. The set of finite nonstandard real numbers, denoted by ${^*}\mathbb{R}_{\text{fin}}$, is a subring of the non-Archimedean field ${^*}\mathbb{R}$. To see what went wrong, note that the following sentences formally express the Archimedean property for $\mathbb{R}$ and its transfer, respectively:
\begin{align}
\label{Archimedean} \forall x \in \mathbb{R} \quad \quad ~&\exists \ n \in \mathbb{N} ~(n > x). \\
\label{*-Archimedean}\forall x \in {^*}\mathbb{R} \quad \quad ~&\exists\  n \in {^*}\mathbb{N} ~(n > x).
\end{align}

The transferred sentence (\ref{*-Archimedean}) no longer expresses the Archimedean property (though it still expresses an interesting fact about ${^*}\mathbb{R}$). The issue is that we are only able to quantify over ${^*}\mathbb{N}$ (and not on $\mathbb{N}$) after transfer. To keep quantifying over $\mathbb{N}$, we would have to transfer an ``infinite statement'' (saying that for every $x$, either $1>x$, or $2>x$, or $3>x$, or $\ldots$), which is not a valid first-order sentence. 

Another non-example is the least upper bound principle: the set $\mathbb{N}$, viewed as a subset of ${^*}\mathbb{R}$, is bounded (by any positive infinite element), yet has no least upper bound (as any upper bound minus one is also an upper bound). The issue here is that the least upper bound property for $\mathbb{R}$ is expressed via the second-order statement:
\begin{align*}
    &\forall A \subseteq \mathbb{R} \\
    &\langle ~{\color{violet}[\exists x \in \mathbb{R}} {\color{OliveGreen}(\forall y \in \mathbb{R}} \{{\color{red}(y \in A) \rightarrow (y \leq x)} \} {\color{OliveGreen})} {\color{violet}]} \color{black}\rightarrow \\
&{\exists z \in \mathbb{R}} \\
&\{{\color{OliveGreen}(\forall y \in \mathbb{R}} ~ [ {\color{red}(y \in A) \rightarrow (y \leq z)} ]{\color{OliveGreen})} \\
&\hspace{3cm} \land {{\color{blue}[\forall w \in \mathbb{R} {\color{OliveGreen} ( \forall y \in \mathbb{R}} {\color{black} \{ [} {\color{Magenta}(y \in A) \rightarrow (y \leq w)} {\color{black}]}} \rightarrow ({\color{brown}z \leq w})\}} {\color{OliveGreen} )} {\color{blue}]}\} ~\rangle. 
\end{align*}

One way to express this as a first-order statement is to quantify over the powerset, $\mathcal{P}(\mathbb{R})$, of $\mathbb{R}$. If our nonstandard map ${^*}$ was able to extend sets of subsets of $X$ as well, then the above would transfer to the following \textit{*-least upper bound property}:

\begin{align*}
    &\forall A \in {^*}\mathcal{P}(\mathbb{R}) \\
    &\langle ~{\color{violet}[\exists x \in {^*}\mathbb{R}} {\color{OliveGreen}(\forall y \in {^*}\mathbb{R}} \{{\color{red}(y \in A) \rightarrow (y \leq x)} \} {\color{OliveGreen})} {\color{violet}]} \color{black}\rightarrow \\
&{\exists z \in {^*}\mathbb{R}} \\
&\{{\color{OliveGreen}(\forall y \in {^*}\mathbb{R}} ~ [ {\color{red}(y \in A) \rightarrow (y \leq z)} ]{\color{OliveGreen})} \\
&\hspace{2.7cm} \land {{\color{blue}[\forall w \in {^*}\mathbb{R} {\color{OliveGreen}( \forall y \in {^*}\mathbb{R}} {\color{black} \{ [} {\color{Magenta}(y \in A) \rightarrow (y \leq w)} {\color{black}]}} \rightarrow ({\color{brown}z \leq w})\}} {\color{OliveGreen} )} {\color{blue}]}\} ~\rangle. 
\end{align*}

Notice that any quantification over a standard set was ``transferred'' to a quantification over the corresponding nonstandard extension of that set. The nonquantified occurrences of $\in$ in the original sentence were as relation symbols (that is, `$a \in b$' is true just in case $a$ is an element of $b$). Strictly speaking, an occurence of a relation (or function) symbol must be transferred to the nonstandard extension of that relation (function) symbol. Thus, the second line of the transferred sentence must technically be `${\color{violet}[\exists x \in {^*}\mathbb{R}} {\color{OliveGreen}(\forall y \in {^*}\mathbb{R}} \{{\color{red}(y ~{^*}\in A) \rightarrow (y ~{^*}\leq x)} \} {\color{OliveGreen})} {\color{violet}]}$'. However, as before, we suppress the ${^*}$ on the transferred relation symbols for better readability. 

In practice, we often write informal logic sentences as long as it is clear that they can me made formal. For instance, instead of writing $$ {\color{OliveGreen}(\forall y \in \mathbb{R}} \{{\color{red}(y \in A) \rightarrow (y \leq x)} \} {\color{OliveGreen})},$$ one would often write `$\forall y \in A {\color{red}(y \leq x)}$'.

The above discussion implies that $\mathbb{N}$ would not be an element of ${^*}\mathcal{P}(\mathbb{R})$ (as it does not satisfy the ${^*}$-least upper bound property), whatever the latter object is (the object ${^*}\mathcal{P}(\mathbb{R})$ would in fact be a subset of $\mathcal{P}({^*}\mathbb{R})$ due to the transfer of the sentence $``\forall A \in \mathcal{P}(\mathbb{R}) ~\forall x \in A ~(x \in \mathbb{R})"$). As we shall see, we do indeed extend $\mathcal{P}(\mathfrak{X})$. An element of ${^*}\mathcal{P}(\mathfrak{X})$ is called an \textit{internal subset} of ${^*}\mathfrak{X}$. The previous example leads to the observation that ${^*}\mathcal{P}(\mathbb{R})$ is not a superset of $\mathcal{P}(\mathbb{R})$ in the literal sense. It does, however, contain as an element the extension ${^*}A$ for any $A \in \mathcal{P}(\mathbb{R})$.

In general, we fix a set $\mathfrak{X}$ consisting of atoms, and extend what is called the superstructure $V(\mathfrak{X})$ of $\mathfrak{X}$, which is defined inductively as follows:

\begin{equation}
    \label{superstructure} 
    \begin{array}{rcl} 
     V_0(\mathfrak{X}) &\defeq &  \mathfrak{X}, \\
     V_{n}(\mathfrak{X}) &\defeq & \mathcal{P}(V_{n-1}(\mathfrak{X})) \text{ for all } n \in \mathbb{N}, \\
     V(\mathfrak{X}) &\defeq &  \bigcup_{n \in \mathbb{N} \cup \{0\}} V_n(\mathfrak{X}).
     \end{array}
\end{equation}

By choosing $\mathfrak{X}$ suitably, the superstructure $V(\mathfrak{X})$ can be made to contain all mathematical objects relevant for a given theory. For example, if $\mathbb{R} \subseteq \mathfrak{X}$, then all collections of subsets of $\mathbb{R}$, including all topologies on $\mathbb{R}$, all sigma-algebras on $\mathbb{R}$, etc., live as objects in $V_2(\mathfrak{X}) \subseteq V(\mathfrak{\mathfrak{X}})$. For a finite subset consisting of $k$ objects from $V_m(\mathfrak{X})$, the ordered $k$-tuple of those objects is an element of $V_n(\mathfrak{X})$ for some larger $n$ (and hence the set of all $k$-tuples of objects in $V_m(\mathfrak{X})$ lies as an object in $V_{n+1}(\mathfrak{X})$). For example, if $x, y \in V_m(\mathfrak{X})$, then the ordered pair $(x, y)$ is just the set $\{\{x\}, \{x,y\}\} \in V_{m+2}(\mathfrak{X})$. Identifying functions and relations with their graphs, $V{(\mathfrak{X})}$ also contains, if $\mathbb{R} \subseteq \mathfrak{X}$, all functions from $\mathbb{R}^n$ to $\mathbb{R}$, all relations on $\mathbb{R}^n$, etc., for all $n \in \mathbb{N}$. 

We extend the superstructure $V(\mathfrak{X})$ via a \textit{nonstandard map},
$${^*}: V(\mathfrak{X}) \rightarrow V({^*}\mathfrak{X}),$$
which, by definition, is any map satisfying the following axioms:
\begin{enumerate}
    \item[(NS1)]\label{NS1} The transfer principle holds.
    \item[(NS2)]\label{NS2} ${^*}\alpha = \alpha$ for all $\alpha \in \mathfrak{X}$.
    \item[(NS3)]\label{NS3} $\{{^*}a: a \in A\} \subsetneq {^*}A$ for any infinite set $A \in V(\mathfrak{X})$.
\end{enumerate}

We refer to Albeverio et al. \cite[Chapter 1]{Albeverio} for discussions on superstructures and the nuances of the two different ways of using the symbol $\in$ (for quantifying over a set, and as a membership relation symbol for a superstructure). A nonstandard map may not be unique. In practice, however, we fix a standard universe $V(\mathfrak{X})$ and a nonstandard map ${^*}$. The reader is referred to Chang--Keisler \cite[Theorem 4.4.5, p. 268]{Model_Theory} or Albeverio et al. \cite[Chapter 1]{Albeverio} for a proof of the existence of a nonstandard map.

An object that belongs to ${^*}A$ for some $A \in V(\mathfrak{X})$ is called \textit{internal}. We have already seen several examples of internal sets and functions: ${^*}\mathbb{N}$, ${^*}\mathbb{R}$, ${^*}f$ (for any standard function $f$), etc. Unlike these examples, (NS3) guarantees the existence of internal objects that are not ${^*}\alpha$ for any $\alpha \in V(\mathfrak{X})$. For instance, for any infinite element $N$ of ${^*}\mathbb{N}$ (such elements exist because of Proposition \ref{infinite natural} which we will study below), the set $\{1, \ldots, N\}$ of the ``first $N$ nonstandard natural numbers'' is internal, yet it does not equal the nonstandard extension of any standard set. The fact that it is internal follows from the transfer of the following standard sentence:

\begin{align*}
    \forall n \in \mathbb{N} ~\exists! A \in \mathcal{P}(\mathbb{N}) ~[\forall x \in \mathbb{N} (x \in A \leftrightarrow x \leq n)].
\end{align*}

Internal objects are those that inherit properties from their standard counterparts by transfer. Thus, for example, the transfer of Archimedean property (see (\ref{*-Archimedean})) says that ${^*}\mathbb{N}$ does not have an upper bound. Note that, by transfer, the class of internal sets is closed under Boolean operations such as finite unions, finite intersections, etc.

\begin{definition}
For a cardinal number $\kappa$, a nonstandard extension is called \textit{$\kappa$-saturated} if any collection of internal sets that has cardinality less than $\kappa$ and that has the finite intersection property has a non-empty intersection.
\end{definition}

 We will henceforth assume that the nonstandard extension we work with is sufficiently saturated (cf. Chang--Keisler \cite[Lemma 5.1.4, p. 294 and Exercise 5.1.21, p. 305]{Model_Theory}). 
 
An element in ${^*}\mathbb{R}$ will be called \textit{infinite} if it is larger than all elements in $\mathbb{R}$. Similarly an element in ${^*}\mathbb{R}$ will be called an \textit{infinitesimal} if its absolute value (that is, its image under the extension of the absolute value map) is smaller than all nonzero elements in $\mathbb{R}$.

\begin{proposition}\label{existence of infinites}
${^*}\mathbb{R}$ contains infinite as well as infinitesimal elements.
\end{proposition}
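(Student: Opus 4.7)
My plan is to first extract an infinite element of ${^*}\mathbb{N}$ using axiom (NS3), and then take its reciprocal to obtain an infinitesimal. Before invoking (NS3), I would note that via the transfer of the basic arithmetic and order operations on $\mathbb{N}$, one identifies each $n \in \mathbb{N}$ with its image ${^*}n \in {^*}\mathbb{N}$ (inductively: ${^*}1$ is the multiplicative identity in ${^*}\mathbb{N}$, and ${^*}(n+1) = {^*}n + {^*}1$). Under this identification, $\{{^*}n : n \in \mathbb{N}\}$ is precisely the ``standard copy'' of $\mathbb{N}$ inside ${^*}\mathbb{N}$.

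Applying (NS3) to the infinite set $\mathbb{N} \in V(\mathfrak{X})$, the inclusion $\{{^*}n : n \in \mathbb{N}\} \subsetneq {^*}\mathbb{N}$ is strict, so there exists $N \in {^*}\mathbb{N}$ with $N \neq n$ for every $n \in \mathbb{N}$. I claim $N > n$ for every $n \in \mathbb{N}$, which will make $N$ infinite in ${^*}\mathbb{R}$. For this, fix any $n \in \mathbb{N}$ and consider the first-order sentence
\[
\forall m \in \mathbb{N} \ \bigl( m \leq n \ \rightarrow \ (m = 1 \lor m = 2 \lor \cdots \lor m = n)\bigr),
\]
which is a finite disjunction using only the constants $1, 2, \ldots, n$. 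Its transfer states that any $m \in {^*}\mathbb{N}$ with $m \leq n$ equals one of $1, \ldots, n$. Since $N$ equals none of these, we must have $N > n$, as required. Hence $N$ is an infinite element of ${^*}\mathbb{R}$ (using ${^*}\mathbb{N} \subseteq {^*}\mathbb{R}$, which follows by transferring $\mathbb{N} \subseteq \mathbb{R}$).

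For the infinitesimal, observe that $N > 0$ in ${^*}\mathbb{R}$ (by transfer of $\forall n \in \mathbb{N}\,(n > 0)$), so $1/N$ is well-defined in the ordered field ${^*}\mathbb{R}$. For each positive $r \in \mathbb{R}$, choose $n \in \mathbb{N}$ with $n > 1/r$; then $N > n > 1/r$, whence $0 < 1/N < r$. Thus $1/N$ is a positive infinitesimal.

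The one subtle point — which I would regard as the main bookkeeping obstacle rather than a deep issue — is justifying the identification of standard naturals with their ${^*}$-images, since (NS2) only guarantees ${^*}\alpha = \alpha$ for atoms in $\mathfrak{X}$, not for elements of $\mathbb{N}$ per se. This is handled once and for all by the inductive argument above, which uses only the transfer of finitely many equations like ${^*}(n+1) = {^*}n + {^*}1$. Once this identification is in place, both the infinite element $N$ and the infinitesimal $1/N$ fall out immediately from (NS3) applied to $\mathbb{N}$.
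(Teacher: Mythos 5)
Your proof is correct but takes a genuinely different route from the paper's. The paper appeals to saturation: the family $\{\{x \in {^*}\mathbb{R} : x > n\}\}_{n \in \mathbb{N}}$ is a countable collection of internal sets with the finite intersection property, so by the standing saturation hypothesis its intersection is non-empty, and any element there is infinite; the reciprocal of any such element is then an infinitesimal. You instead work directly from axiom (NS3) applied to the infinite set $\mathbb{N}$, combined with the transfer of the finitary sentence asserting that the initial segment of $\mathbb{N}$ below $n$ equals $\{1,\dots,n\}$. Your route has the merit of requiring only the three defining axioms (NS1)--(NS3) and nothing more --- in particular, no saturation --- so it proves the existence of infinites and infinitesimals under strictly weaker hypotheses than the paper's argument; the paper's route is shorter but leans on an assumption that logically does not become necessary until much later in the development. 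One minor observation: the ``subtle point'' you flag concerning the identification of $n$ with ${^*}n$ is a non-issue in this paper's setting. As noted in Remark \ref{superstructure remark}, $\mathfrak{X}$ is taken to contain a copy of $\mathbb{R}$ among its urelements, so each $n \in \mathbb{N} \subseteq \mathbb{R} \subseteq \mathfrak{X}$ is an atom and ${^*}n = n$ is immediate from (NS2). The inductive bookkeeping you carry out would be needed if $\mathbb{N}$ were modeled set-theoretically inside the superstructure rather than as atoms, so it is not wrong, merely unnecessary here.
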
 
\begin{proof}
Any element in the non-empty intersection $\cap_{n \in \mathbb{N}} \{x \in {^*}\mathbb{R}: x>n\}$ must be infinite. The multiplicative inverse of any infinite element is infinitesimal.
\end{proof}

The next result holds since elements of ${^*}\mathbb{N}$ are at least one unit apart (due to transfer) and any finite element of ${^*}\mathbb{N}$ has a least natural number larger than it.

\begin{proposition}\label{infinite natural}
Any $N \in {^*}\mathbb{N} \backslash \mathbb{N}$ is hyperfinite. We express this by writing $N > \mathbb{N}$.
\end{proposition}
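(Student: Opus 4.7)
The plan is to argue by contrapositive: show that any $N \in {^*}\mathbb{N}$ failing $N > \mathbb{N}$ must already lie in $\mathbb{N}$. The strategy has two ingredients, corresponding to the two remarks in the hint preceding the statement.

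First, I would establish by transfer that no hypernatural sits strictly between two consecutive standard naturals. For each fixed $k \in \mathbb{N}$, the first-order sentence
\[
\forall x \in \mathbb{N}\, \neg(k < x \wedge x < k+1),
\]
with $k$ appearing as a constant parameter, transfers to the same sentence over ${^*}\mathbb{N}$. Equivalently, transferring $\forall x, y \in \mathbb{N}\,(x < y \to x+1 \leq y)$ records that elements of ${^*}\mathbb{N}$ remain at least one unit apart in ${^*}\mathbb{R}$.

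Second, suppose towards a contradiction that $N \in {^*}\mathbb{N} \setminus \mathbb{N}$ satisfies $N \leq m$ for some $m \in \mathbb{N}$; this is the precise negation of $N > \mathbb{N}$, since $N \geq 0$ by transfer. Then the set $\{n \in \mathbb{N} : n \geq N\}$ is a nonempty subset of $\mathbb{N}$ (it contains $m$), so by well-ordering of $\mathbb{N}$ it has a least element $n_0 \in \mathbb{N}$. If $n_0 = 0$ then $N = 0 \in \mathbb{N}$. Otherwise $n_0 - 1 \in \mathbb{N}$ and minimality of $n_0$ forces $n_0 - 1 < N \leq n_0$; the first ingredient rules out $n_0 - 1 < N < n_0$, so $N = n_0 \in \mathbb{N}$. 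Either branch contradicts $N \notin \mathbb{N}$, hence $N > m$ for every $m \in \mathbb{N}$, i.e., $N > \mathbb{N}$.

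The only delicate point is the transfer in the first ingredient: one must emphasize that $k$ appears as a fixed \emph{standard} parameter, so that after transfer we still have a legitimate first-order sentence quantifying only over the internal variable $x \in {^*}\mathbb{N}$ — we are \emph{not} trying to transfer an infinite conjunction over all $k$ at once. Beyond this subtlety, the argument is a clean combination of transfer with well-ordering of $\mathbb{N}$.
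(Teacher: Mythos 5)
Your proposal is correct and matches the paper's (one-line) argument exactly: the paper's justification is precisely that elements of ${^*}\mathbb{N}$ are spaced at least one unit apart (by transfer) and that a finite hypernatural admits a least standard natural bound above it, which you flesh out via well-ordering of $\mathbb{N}$. The contrapositive framing and the care you take about transferring a sentence with a fixed standard parameter (rather than an infinite conjunction) are both sound and in the spirit of the paper's discussion surrounding the Archimedean property.
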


The following result is a consequence of the fact that $\mathbb{N}$ is not internal. See also Albeverio et al. \cite[Proposition 1.2.7, p.21]{Albeverio}.

\begin{proposition}\label{Over and under} Let $A$ be an internal set.
\begin{enumerate}[(i)]
    \item\label{overflow}[\textbf{Overflow}] If $\mathbb{N} \subseteq A$, then there is an $N > \mathbb{N}$ such that $$\{n \in {^*}\mathbb{N}: n \leq N\} \subseteq A.$$ 
    \item\label{underflow}[\textbf{Underflow}] If $A$ contains all hyperfinite natural numbers, then there is an $n_0 \in \mathbb{N}$ such that ${^*}\mathbb{N}_{\geq n_0} \defeq \{n \in {^*}\mathbb{N}: n \geq n_0\} \subseteq A$.
\end{enumerate}
\end{proposition}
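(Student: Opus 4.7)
The plan is to leverage the key fact that $\mathbb{N}$ is not internal, which follows from the transfer of the well-ordering principle: if $\mathbb{N}$ were internal, then the nonempty internal set ${^*}\mathbb{N} \setminus \mathbb{N}$ would have a least element $M$, but then $M - 1$ would be a finite natural number whose successor is infinite, a contradiction. I will use this fact in combination with the closure properties of internal sets (Boolean operations preserve internality, and sets defined by internal conditions via transfer are internal).

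For \eqref{overflow} (Overflow), I would introduce the auxiliary set
\begin{equation*}
B \defeq \{M \in {^*}\mathbb{N} : \forall n \in {^*}\mathbb{N}\, (n \leq M \rightarrow n \in A)\}.
\end{equation*}
Since $A$ is internal, so is $B$ (it is defined from $A$ by a bounded first-order formula, a construction that transfer guarantees produces an internal set). The hypothesis $\mathbb{N} \subseteq A$ immediately gives $\mathbb{N} \subseteq B$, because for any standard $M \in \mathbb{N}$ the initial segment $\{1, \ldots, M\}$ lies inside $\mathbb{N}$ and hence inside $A$. If $B$ contained no hyperfinite element, then $B = \mathbb{N}$ would be internal, contradicting the fact above. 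So there exists $N > \mathbb{N}$ with $N \in B$, which is exactly the conclusion.

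For \eqref{underflow} (Underflow), I would apply the Overflow statement to the internal set $A' \defeq {^*}\mathbb{N} \setminus A$. The hypothesis on $A$ tells us that $A' \subseteq \mathbb{N}$. Suppose, toward a contradiction, that no $n_0 \in \mathbb{N}$ works; then for every $n_0 \in \mathbb{N}$ there is some $m \geq n_0$ with $m \in A'$, which means $A'$ is unbounded in $\mathbb{N}$. Equivalently, the internal set $C \defeq \{n \in {^*}\mathbb{N} : \exists m \in A'\, (m \geq n)\}$ contains all of $\mathbb{N}$. Applying Overflow to $C$ produces some $N > \mathbb{N}$ with $\{1, \ldots, N\} \subseteq C$, i.e.\ there exists $m \in A'$ with $m \geq N > \mathbb{N}$; but $A' \subseteq \mathbb{N}$, a contradiction. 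Hence some $n_0 \in \mathbb{N}$ satisfies ${^*}\mathbb{N}_{\geq n_0} \cap A' = \emptyset$, which is the desired conclusion.

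The main conceptual obstacle is not any calculation but the bookkeeping of internality: one must be confident that the sets $B$ and $C$ defined by bounded quantification over internal objects are themselves internal. This is precisely where transfer (applied to the analogous standard definitions producing subsets of $\mathbb{N}$) does the work, and once that is accepted, both parts reduce to the slogan ``an internal subset of ${^*}\mathbb{N}$ cannot be exactly $\mathbb{N}$.''
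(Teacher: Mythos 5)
Your proof is correct and follows exactly the line the paper indicates: the paper does not give a detailed argument but states that the proposition ``is a consequence of the fact that $\mathbb{N}$ is not internal,'' and you have faithfully unpacked that remark, including a correct proof via transfer of well-ordering that $\mathbb{N}$ is not internal. Both the overflow argument (the auxiliary internal set $B$ of common upper bounds, plus the observation that $B$ cannot equal $\mathbb{N}$) and the underflow argument (reducing to overflow via $A' = {^*}\mathbb{N}\setminus A$) are sound and are the standard way of making the paper's one-line hint rigorous.
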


The next result says that one can think of a finite nonstandard real number $z$ as having a real part, and an infinitesimal part (in fact, this real part is just $\sup\{y \in \mathbb{R}: y \leq z\}$). See Cutland \cite[Theorem 2.10, p. 55]{Cutland_NATO} for a proof.
\begin{proposition}\label{standard part map}
For all $z \in {^*}\mathbb{R}_{\text{fin}}$, there is a unique $x \in \mathbb{R}$ (called the \textit{standard part} of $z$) such that $(z - x)$ is infinitesimal. We write $\st(z) = x$ (or $z \approx x$; ${\degree}z = x$, etc). 
\end{proposition}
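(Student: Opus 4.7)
The plan is to use the Dedekind completeness of $\mathbb{R}$ to construct the standard part explicitly, and then to leverage the fact that the only real infinitesimal is zero in order to get uniqueness. Concretely, given $z \in {^*}\mathbb{R}_{\text{fin}}$, I would set
\[
A \defeq \{y \in \mathbb{R} : y \leq z\},
\]
where of course the inequality on the right is the extension of $\leq$ to ${^*}\mathbb{R}$. By definition of $z$ being finite, there exist real numbers $M_1 < M_2$ with $M_1 \leq z \leq M_2$, so $A$ is nonempty (it contains $M_1$) and bounded above in $\mathbb{R}$ (by $M_2$). I would then define $x \defeq \sup A$, which exists in $\mathbb{R}$ by the least upper bound property.

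Next I would show that $z - x$ is infinitesimal. Suppose toward a contradiction that there is a standard $\epsilon \in \mathbb{R}_{>0}$ with $|z - x| > \epsilon$. If $z - x > \epsilon$, then $x + \epsilon$ is a real number with $x + \epsilon \leq z$, so $x + \epsilon \in A$; this contradicts $x = \sup A$ since $x + \epsilon > x$. If instead $z - x < -\epsilon$, then $z < x - \epsilon$, so every $y \in A$ satisfies $y \leq z < x - \epsilon$, which makes $x - \epsilon$ a real upper bound of $A$ strictly smaller than the supremum, again a contradiction. Hence no such $\epsilon$ exists and $z - x$ is infinitesimal.

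For uniqueness, I would note that if $x, x' \in \mathbb{R}$ both satisfy $z - x \approx 0$ and $z - x' \approx 0$, then $x - x' = (z - x') - (z - x)$ is the difference of two infinitesimals, hence infinitesimal. But $x - x'$ is a real number, and the only real infinitesimal is $0$, so $x = x'$.

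I expect no real obstacle here: the only subtlety is ensuring that the set $A$ is legitimately defined in the standard universe (it is, since it is an externally described subset of $\mathbb{R}$, not of ${^*}\mathbb{R}$, and the least upper bound property applies to $\mathbb{R}$ as a standard fact rather than requiring the (failing) ${^*}$-least upper bound property discussed earlier in the excerpt). The finiteness hypothesis on $z$ is precisely what is needed to make $A$ both nonempty and bounded; without it one of the two sides of the squeeze argument above would fail.
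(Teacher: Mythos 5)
Your proof is correct and takes precisely the approach the paper indicates: the text immediately preceding the proposition remarks that ``this real part is just $\sup\{y \in \mathbb{R}: y \leq z\}$,'' and the paper defers the details to Cutland's reference, which is the same Dedekind-completeness argument you give. Your handling of both directions of the squeeze, the uniqueness via ``the only real infinitesimal is zero,'' and the explicit note that $A$ is an external subset of $\mathbb{R}$ (so the standard least upper bound principle applies, not the failing ${^*}$-version) are all exactly right.
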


Note that, more generally, one can define the notion of standard parts for elements in the nonstandard extension of any Hausdorff space (in general, we will need a point to be \textit{nearstandard}, instead of finite, for it to have a standard part). See Albeverio et al. \cite[p.48]{Albeverio} for that discussion. While we omit such a general setting in this paper, it is useful to know that the notions of finite points and standard parts have natural generalizations to finite-dimensional Euclidean spaces. Thus for $k \in \mathbb{N}$, an element $x \in {^*}\mathbb{R}^k$ is \textit{finite} if and only if $\norm{x} \in {^*}\mathbb{R}_{\text{fin}}$, and for any finite $x \in ({^*}\mathbb{R}^k)$, there is a unique $y \in \mathbb{R}^k$ satisfying $\norm{x - y} \approx 0$, which we call the \textit{standard part} of $x$.  

Using the notion of standard parts, we have the following useful characterization of continuity and uniform continuity (see, for example, Albeverio et al. \cite[Proposition 1.3.3, p.27]{Albeverio} for the one-dimensional case, with the higher dimensional case following a similar argument):

\begin{proposition}
Let $k , \ell \in \mathbb{N}$ and $f: \mathbb{R}^k \rightarrow \mathbb{R}^\ell$ be a function. Then:
\begin{enumerate}
    \item $f$ is continuous at $x \in \mathbb{R}^k$ if and only if ${^*}f(\st^{-1}(x)) \subseteq \st^{-1}(f(x))$.
    \item $f$ is uniformly continuous if and only if for any $x, y \in {^*}\mathbb{R}^k$ with $\norm{x - y} \approx 0$, we have $\norm{{^*}f(x) - {^*}f(y)} \approx 0$ as well.
\end{enumerate}
\end{proposition}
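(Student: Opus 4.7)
The plan is to prove both parts by transferring the standard $\epsilon$--$\delta$ definitions and then exploiting the characterization of infinitesimals as quantities smaller than every positive standard real. Both parts have essentially the same structure (forward direction by direct transfer, reverse direction by contradiction using saturation/overflow), with the only difference being whether the points live in a fixed monad or in the full nonstandard extension.

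For the forward direction of (1), I would assume continuity at $x$, fix $y \in {^*}\mathbb{R}^k$ with $\st(y) = x$, and aim to show $\st({^*}f(y)) = f(x)$. The statement $``\forall \epsilon>0 \, \exists \delta>0 \, \forall z\in\mathbb{R}^k \,(\norm{z-x}<\delta \rightarrow \norm{f(z)-f(x)}<\epsilon)"$ transfers to the same statement with $\mathbb{R}^k$ replaced by ${^*}\mathbb{R}^k$ and $f$ by ${^*}f$. Given any standard $\epsilon > 0$, fix the corresponding $\delta > 0$; since $\norm{y-x}$ is infinitesimal it is in particular smaller than $\delta$, so $\norm{{^*}f(y)-f(x)} < \epsilon$. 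Because this holds for every standard $\epsilon > 0$, $\norm{{^*}f(y)-f(x)}$ is infinitesimal, i.e.\ ${^*}f(y) \in \st^{-1}(f(x))$.

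For the reverse direction of (1), I would argue by contradiction. If $f$ fails to be continuous at $x$, there is a standard $\epsilon_0 > 0$ such that for every $n \in \mathbb{N}$ the set $A_n \defeq \{z \in \mathbb{R}^k : \norm{z-x} < 1/n \text{ and } \norm{f(z) - f(x)} \geq \epsilon_0\}$ is non-empty. By transfer, each ${^*}A_n$ is internal and non-empty, and the family $\{{^*}A_n\}_{n \in \mathbb{N}}$ has the finite intersection property (since the $A_n$ are nested). Saturation then provides a point $z \in \bigcap_{n \in \mathbb{N}} {^*}A_n$, which satisfies $\norm{z-x} < 1/n$ for every standard $n$ (hence $z \in \st^{-1}(x)$) while $\norm{{^*}f(z) - f(x)} \geq \epsilon_0$, contradicting the hypothesis ${^*}f(\st^{-1}(x)) \subseteq \st^{-1}(f(x))$. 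An equivalent argument uses overflow on the internal set of $n \in {^*}\mathbb{N}$ for which ${^*}A_n \neq \emptyset$.

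Part (2) follows an entirely parallel argument, with the sole change being that the quantification over $x$ and $\delta$ now appears on the outside: transfer $``\forall\epsilon>0\,\exists\delta>0\,\forall x,y\in\mathbb{R}^k\,(\norm{x-y}<\delta\rightarrow\norm{f(x)-f(y)}<\epsilon)"$ for the easy direction, and for the converse pick a witnessing $\epsilon_0$ against uniform continuity, form the internal non-empty sets $B_n$ of pairs $(x,y) \in ({^*}\mathbb{R}^k)^2$ with $\norm{x-y} < 1/n$ and $\norm{{^*}f(x)-{^*}f(y)} \geq \epsilon_0$, and invoke saturation. The main (minor) point to be careful about is that in part (2) the points $x,y$ are allowed to be arbitrary elements of ${^*}\mathbb{R}^k$ (including infinite ones), not just nearstandard points, and the transferred statement handles this uniformly; no further work is needed.
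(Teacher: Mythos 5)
The paper itself gives no proof of this proposition; it cites Albeverio et al.\ \cite{Albeverio} and moves on, so there is no in-text argument to compare against. Your proof is the standard one found in such references and is substantively correct: transfer for the easy direction, and a family of witness sets with small bases in the hard direction.

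One point of care in the forward directions: as written, you first transfer the full sentence $\forall\epsilon>0\,\exists\delta>0\,(\cdots)$ and then ``fix the corresponding $\delta$''. If the $\delta$ is drawn from the \emph{transferred} existential, it lives in ${^*}\mathbb{R}_{>0}$ and could be infinitesimal, in which case $\norm{y-x}\approx 0$ does \emph{not} yield $\norm{y-x}<\delta$. The correct order is to fix a standard $\epsilon$, obtain a standard $\delta$ from the ordinary continuity of $f$ (before transfer), and only then transfer the inner $\forall z$-clause with $\epsilon$ and $\delta$ held fixed as standard parameters. Your conclusion is right, but as stated the quantifier handling is slightly out of order. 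Finally, in the reverse direction saturation is more than you need: once $(A_n)_{n\in\mathbb{N}}$ is defined as a standard sequence of sets, the sentence $\forall n\in\mathbb{N}\,(A_n\neq\emptyset)$ transfers to $\forall n\in{^*}\mathbb{N}\,({^*}A_n\neq\emptyset)$, so choosing a single $N>\mathbb{N}$ and any $z\in{^*}A_N$ already produces a point of $\st^{-1}(x)$ with $\norm{{^*}f(z)-f(x)}\geq\epsilon_0$, and similarly for part (2). The saturation and overflow variants you describe also work, but are not required.
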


The next result gives a nice characterization of limit points of sequences (see Cutland \cite[Theorems 3.1 and 3.3]{Cutland_NATO} for proofs of the two statements):

\begin{proposition}\label{limits of sequences}
For a sequence of real numbers $\{a_n\}_{n \in \mathbb{N}}$, there is an extended sequence $\{a_n\}_{n \in {^*}\mathbb{N}}$ (by viewing the original sequence as a function on $\mathbb{N}$). 
A real number $L$ is an accumulation point of the sequence $\{a_n\}_{n \in \mathbb{N}} \iff \text{there is an } N > \mathbb{N}$ such that $\st(a_N) = L$. Thus $\lim a_n = L \iff \st(a_N) = L$ for all $N > \mathbb{N}$. 
\end{proposition}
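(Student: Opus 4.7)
The plan is to prove the accumulation-point characterization first and then deduce the limit statement from it, treating the $\lim a_n = L$ case as saying that $L$ is the unique accumulation point.

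For the forward direction of the accumulation-point statement, suppose $L$ is an accumulation point of $\{a_n\}_{n \in \mathbb{N}}$. For each $k \in \mathbb{N}$ I will consider the internal set
\[
A_k \defeq \{n \in {^*}\mathbb{N} : n \geq k \text{ and } \abs{a_n - L} < 1/k\}.
\]
Each $A_k$ is internal because it is definable by a bounded formula from the internal sequence and the standard parameter $k$. The fact that $L$ is an accumulation point guarantees that each $A_k$ contains a standard natural number, and since $A_{\max(k_1, \ldots, k_m)} \subseteq A_{k_1} \cap \cdots \cap A_{k_m}$, this family has the finite intersection property. Invoking the $\aleph_1$-saturation of the nonstandard extension gives an $N \in \bigcap_{k \in \mathbb{N}} A_k$. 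Then $N \geq k$ for every standard $k$, so $N > \mathbb{N}$, and $\abs{a_N - L} < 1/k$ for all $k \in \mathbb{N}$ forces $a_N \approx L$, i.e., $\st(a_N) = L$.

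For the reverse direction, suppose some hyperfinite $N$ satisfies $\st(a_N) = L$. Given any standard $\epsilon > 0$ and any $k \in \mathbb{N}$, the number $N$ itself witnesses the internal statement $\exists n \in {^*}\mathbb{N} \, (n \geq k \wedge \abs{a_n - L} < \epsilon)$. By the transfer principle, the corresponding standard statement $\exists n \in \mathbb{N} \, (n \geq k \wedge \abs{a_n - L} < \epsilon)$ holds, and this is precisely the condition that $L$ be an accumulation point. Next, to obtain the limit statement, if $\lim a_n = L$, then for any standard $\epsilon > 0$ we can choose $n_0 \in \mathbb{N}$ with $\abs{a_n - L} < \epsilon$ for all standard $n \geq n_0$; transferring this universal statement gives $\abs{a_N - L} < \epsilon$ for every $N \in {^*}\mathbb{N}$ with $N \geq n_0$, in particular every $N > \mathbb{N}$, and letting $\epsilon \downarrow 0$ yields $\st(a_N) = L$. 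Conversely, if $\lim a_n \neq L$, there is an $\epsilon > 0$ such that the set $S \defeq \{n \in \mathbb{N} : \abs{a_n - L} \geq \epsilon\}$ is cofinal in $\mathbb{N}$; then ${^*}S$ is internal and contains arbitrarily large standard naturals, so by the overflow principle (Proposition \ref{Over and under}\eqref{overflow}) applied to $\{n \in \mathbb{N} : \mathbb{N}_{\leq n} \subseteq \mathbb{N} \setminus {^*}S \text{ fails}\}$, or more directly, $\,{^*}S$ contains some $N > \mathbb{N}$, contradicting $\st(a_N) = L$.

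The main obstacle I anticipate is being precise about the use of saturation in the first direction: one must confirm that each $A_k$ is indeed internal (which follows from internal definability using standard parameters) and that the family is small enough that $\aleph_1$-saturation suffices. A secondary subtlety is that the reverse implication for the limit statement requires the implicit assumption that $a_N$ is finite for every hyperfinite $N$, which is forced by the hypothesis $\st(a_N) = L$ but must be acknowledged. Apart from these points, both directions are essentially bookkeeping with transfer and the basic properties of the standard part map.
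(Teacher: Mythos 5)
Your argument is correct in all four directions. Note, however, that the paper itself does not prove this proposition; it cites Cutland's notes (Theorems 3.1 and 3.3 there) for the two statements, so there is no internal proof to compare against. Your use of $\aleph_1$-saturation for the ``accumulation point $\Rightarrow$ hyperfinite witness'' direction is sound (the family $\{A_k\}_{k\in\mathbb{N}}$ is countable and has the finite intersection property because $A_{\max(k_1,\dots,k_m)}\subseteq A_{k_1}\cap\cdots\cap A_{k_m}$); an equally common alternative is to apply overflow to the internal set $\{m \in {^*}\mathbb{N} : \exists\, n \geq m\,(|a_n - L| < 1/m)\}$, which avoids invoking saturation directly. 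One small point of care: in the last step you gesture at overflow via a set that is not obviously internal, but the ``more directly'' conclusion is the right one — since $S$ is an infinite subset of $\mathbb{N}$, axiom (NS3) forces ${^*}S \setminus S \neq \emptyset$, and any such element is necessarily in ${^*}\mathbb{N}\setminus\mathbb{N}$ because ${^*}S\cap\mathbb{N} = S$; transfer then gives $|a_N - L|\geq\epsilon$ for that hyperfinite $N$, contradicting $\st(a_N) = L$. Everything else is standard bookkeeping with transfer and the standard part map, exactly as you describe.
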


The following consequence of saturation will be useful in the sequel. Since this result is very important, we will provide a proof (see also Albeverio et al. \cite[Lemma 3.1.1, p. 64]{Albeverio}).

\begin{proposition}\label{countable union}
A countable union of disjoint internal sets is internal if and only if all but finitely many of them are empty.
\end{proposition}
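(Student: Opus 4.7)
The proof will have two directions. The easy direction is that if only finitely many of the $A_n$ are nonempty, the union is a finite union of internal sets and hence internal by the transfer of the fact that a finite union of sets is a set, applied iteratively. So the content of the proposition lies in the converse.

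For the nontrivial direction, I plan a proof by contradiction using saturation. Suppose $A = \bigcup_{n \in \mathbb{N}} A_n$ is internal, the $A_n$ are pairwise disjoint and internal, and infinitely many are nonempty. By passing to a subsequence (which does not alter the union), I may assume every $A_n$ is nonempty. For each $n \in \mathbb{N}$, set
\[
B_n \defeq A \setminus (A_1 \cup A_2 \cup \cdots \cup A_n).
\]
Each $B_n$ is internal because internal sets are closed under finite Boolean operations (by transfer). Moreover, $B_n$ is nonempty because it contains any element of $A_{n+1}$, and the family $\{B_n\}_{n \in \mathbb{N}}$ is decreasing, hence has the finite intersection property.

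By the assumed countable saturation (here I use that the cardinality of the index set $\mathbb{N}$ is $\aleph_0$, strictly less than the saturation cardinal), the intersection $\bigcap_{n \in \mathbb{N}} B_n$ is nonempty. Pick any $x$ in this intersection. Since $x \in A$ and the $A_n$ cover $A$ by hypothesis, there is some $m \in \mathbb{N}$ with $x \in A_m$; but then $x \notin B_m$ because $B_m$ excludes $A_m$, contradicting the choice of $x$. This contradiction completes the argument.

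The main obstacle is conceptual rather than technical: one must recognize that even though the statement is about a \emph{countable} union (a standard, external object unless it is an internal union indexed by $\{1, \ldots, N\}$ for some hyperfinite $N$), the right tool is saturation applied to the decreasing sequence of complements. Everything else is a routine application of transfer (for closure of the internal class under finite Boolean operations) and of the saturation axiom.
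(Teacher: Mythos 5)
Your proof is correct and follows essentially the same route as the paper: both reduce the forward direction to transfer (closure of internal sets under finite Boolean operations) and both obtain the converse by applying saturation to a family of internal complements whose total intersection is forced to be empty. Your decreasing sequence $B_n = A \setminus (A_1 \cup \cdots \cup A_n)$ is just the chain of finite intersections of the paper's family $\{A \setminus A_i\}_{i \in \mathbb{N}}$, so the saturation argument is identical in substance.
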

\begin{proof}
Suppose $\{A_i\}_{i \in \mathbb{N}}$ is a countable collection of disjoint internal sets. Let $A = \cup_{i \in \mathbb{N}}A_i$. If all but finitely many of the $A_i$ are empty, then $A$ being a finite union of internal sets is also internal due to transfer.

Conversely, if $A$ is internal, then $A \backslash A_i$ is internal for each $i \in \mathbb{N}$ by transfer. In that case, if all but finitely many of the $A_i$ are not empty, then the collection $\{A \backslash A_i\}_{i \in \mathbb{N}}$ would satisfy the finite intersection property. By saturation, this would lead to $\cap_{i \in \mathbb{N}} (A \backslash A_i) \neq \emptyset$, which is absurd. This completes the proof by contradiction.
\end{proof}

\subsection{Loeb Measures}

Let $\Omega$ be an internal set in a nonstandard universe ${^*}V(\mathfrak{X})$. Let $\mathcal{F}$ be an \textit{internal algebra} on $\Omega$, that is, an internal set consisting of subsets of $\Omega$ that is closed under complements and finite unions. Given a finite, finitely additive internal measure $\mathbb{P}$ (that is, $\mathbb{P}: \mathcal{F} \rightarrow {^*}\mathbb{R}_{\geq 0}$ satisfies $\mathbb{P}(\emptyset) = 0$, $\mathbb{P}(\Omega) < \infty$, and $\mathbb{P}(A \cup B) = \mathbb{P}(A) + \mathbb{P}(B)$ whenever $A \cap B = \emptyset$), the map $\st(\mathbb{P}) : \mathcal{F} \rightarrow {\mathbb{R}_{\geq 0}}$ is an ordinary finite, finitely additive measure. By Proposition \ref{countable union}, it follows that $\st(\mathbb{P})$ satisfies the premises of Carath\'eodory Extension Theorem. By that theorem, it extends to a unique measure on $\sigma(\mathcal{F})$ (the smallest sigma algebra containing $\mathcal{F}$), whose completion is called the \textbf{Loeb measure} of $\mathbb{P}$. The corresponding complete measure space $(\Omega, L(\mathcal{F}), L\mathbb{P})$ is called the \textbf{Loeb space} of $(\Omega, \mathcal{F}, \mathbb{P})$. 

It is in general difficult to visualize Loeb measurable sets that are not in the original internal algebra. The following approximation result helps us to approximate any Loeb measurable sets by sets in the original internal algebra. See Albeverio et al. \cite[Theorem 3.1.2, p. 64]{Albeverio} for a proof.

\begin{proposition}\label{approximating by *Borel sets}
Let $(\Omega, L(\mathcal{F}), L\mathbb{P})$ be the Loeb probability space of $(\Omega, \mathcal{F}, \mathbb{P})$. 
\begin{enumerate}[(i)]
    \item\label{approximate 1} For each $A \in \sigma(\mathcal{F})$, there is a set $B \in \mathcal{F}$ such that $L\mathbb{P}(A \triangle B) = 0$.
    
    \item\label{approximate 2} For each $A \in \sigma(\mathcal{F})$ and $\epsilon \in \mathbb{R}_{>0}$, there are sets $B, D \in \mathcal{F}$ such that $B \subseteq A \subseteq D$ and
    $$L\mathbb{P}(D) - \epsilon \leq L\mathbb{P}(A) \leq L\mathbb{P}(B) + \epsilon.$$
\end{enumerate}
\end{proposition}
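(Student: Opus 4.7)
My plan is to handle (ii) first, using the outer measure description of the Loeb measure together with saturation and overflow, and then to derive (i) by iterating (ii) and invoking saturation once more.

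For (ii), I would start from the fact that $L\mathbb{P}$ on $\sigma(\mathcal{F})$ is the Carathéodory extension of $\st\circ\mathbb{P}$ on $\mathcal{F}$, so
$$L\mathbb{P}(A) = \inf\left\{\sum_{i\in\mathbb{N}} \st(\mathbb{P}(A_i)) : A_i \in \mathcal{F}, \ A \subseteq \bigcup_i A_i\right\}.$$
Pick a countable cover $\{A_i\}_{i\in\mathbb{N}} \subseteq \mathcal{F}$ witnessing $\sum_i \st(\mathbb{P}(A_i)) < L\mathbb{P}(A) + \epsilon$. The natural candidate $\bigcup_{i\in\mathbb{N}} A_i$ need not lie in $\mathcal{F}$, so I would use saturation to lift $\{A_i\}_{i\in\mathbb{N}}$ to an \emph{internal} sequence $\{A_i\}_{i\in {^*}\mathbb{N}}$ with values in $\mathcal{F}$ (the collection of internal sequences whose first $n$ terms agree with $A_1,\ldots,A_n$ is internal for each $n$ and has the finite intersection property). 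Then $N \mapsto {^*}\mathbb{P}(\bigcup_{i\leq N} A_i)$ is an internal function on ${^*}\mathbb{N}$ whose values at standard $N$ are bounded by $L\mathbb{P}(A) + \epsilon$, so by overflow (Proposition \ref{Over and under}) there is $N > \mathbb{N}$ with ${^*}\mathbb{P}(\bigcup_{i\leq N} A_i) < L\mathbb{P}(A) + \epsilon$. Set $D \defeq \bigcup_{i\leq N} A_i$. Hyperfinite unions of an internal sequence in $\mathcal{F}$ are internal (by transfer), so $D \in \mathcal{F}$; clearly $A \subseteq \bigcup_{i\in\mathbb{N}} A_i \subseteq D$; and $L\mathbb{P}(D) \leq L\mathbb{P}(A) + \epsilon$. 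For the lower approximation $B$, I would apply the same construction to $\Omega \setminus A \in \sigma(\mathcal{F})$, obtaining $D' \in \mathcal{F}$ with $\Omega \setminus A \subseteq D'$ and $L\mathbb{P}(D') \leq L\mathbb{P}(\Omega \setminus A) + \epsilon$, then take $B \defeq \Omega \setminus D' \in \mathcal{F}$; since $\mathbb{P}$ is a probability measure, $L\mathbb{P}(B) \geq L\mathbb{P}(A) - \epsilon$.

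For (i), I would iterate (ii) to produce, for each $n \in \mathbb{N}$, sets $B_n, D_n \in \mathcal{F}$ with $B_n \subseteq A \subseteq D_n$ and $L\mathbb{P}(D_n \setminus B_n) < 1/n$. Since $B_n \subseteq A \subseteq D_m$ for all $n, m \in \mathbb{N}$, for any finite $F, G \subseteq \mathbb{N}$ the set $\bigcup_{n\in F} B_n$ lies in $\mathcal{F}$ and is sandwiched between every $B_{n_0}$ ($n_0 \in F$) and every $D_{m_0}$ ($m_0 \in G$). Hence the countable family of internal sets
$$\mathcal{C}_{n,m} \defeq \{C \in \mathcal{F} : B_n \subseteq C \subseteq D_m\}, \qquad n, m \in \mathbb{N},$$
has the finite intersection property. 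By saturation, pick $B \in \bigcap_{n,m} \mathcal{C}_{n,m}$. Then $A \setminus B \subseteq A \setminus B_n \subseteq D_n \setminus B_n$ and $B \setminus A \subseteq D_m \setminus A \subseteq D_m \setminus B_m$, so $L\mathbb{P}(A \triangle B) \leq 2/n$ for every $n$; letting $n \to \infty$ gives $L\mathbb{P}(A \triangle B) = 0$.

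The main obstacle is the production of $D$ in (ii): one must leverage saturation to turn a countable covering sequence into an internal sequence and then overflow to cut it off at a hyperfinite index without losing measure control, because the genuine countable union of the original cover is typically not internal. Once this step is performed, the rest of (ii) follows by a complementation trick, and (i) reduces to a routine saturation argument applied to the approximations from (ii).
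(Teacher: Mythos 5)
Your proof is correct. The paper itself supplies no proof of this proposition — it defers to Albeverio et al.\ [Theorem 3.1.2] — and your argument (outer-measure characterization of $L\mathbb{P}$ on $\sigma(\mathcal{F})$, $\aleph_1$-saturation to internalize the countable cover, overflow to truncate at a hyperfinite index, complementation for the inner bound, then a second saturation step to sandwich a single internal $B$ between all the $B_n$ and $D_n$) is essentially the standard textbook argument that the paper cites.
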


We will use the following simplification of Ross \cite[Theorem 5.1, p. 105]{Ross_NATO} extensively:
\begin{proposition}\label{Loeb measurability}
Let $(\Omega, L(\mathcal{F}), L\mathbb{P})$ be the Loeb probability space of $(\Omega, \mathcal{F}, \mathbb{P})$. Suppose $F: \Omega \rightarrow {^*}\mathbb{R}$ is an internal function that is measurable in the sense that $F^{-1}({^*}B) \in \mathcal{F}$ for all $B \in \mathcal{B}(\mathbb{R})$ (where $\mathcal{B}(\mathbb{R})$ is the Borel $\sigma$-algebra on $\mathbb{R}$). If $F(\omega) \in {^*}\mathbb{R}_{\text{fin}}$ for $L\mathbb{P}$-almost all $\omega \in \Omega$, then $\st(F)$ is Loeb measurable (that is, measurable as a map from $(\Omega, L(\mathcal{F}))$ to $(\mathbb{R}, \mathcal{B}(\mathbb{R}))$).
\end{proposition}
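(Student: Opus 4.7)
My plan is to verify measurability on a generating family of Borel sets, handling the possibly-infinite values of $F$ by first discarding a $L\mathbb{P}$-null set. Since $\mathcal{B}(\mathbb{R})$ is generated by the half-lines $(-\infty, a]$ with $a \in \mathbb{R}$, it suffices to show that $\st(F)^{-1}((-\infty, a]) \in L(\mathcal{F})$ for every real $a$.

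First I would deal with the exceptional set. Let
\[
N \defeq \{\omega \in \Omega : F(\omega) \notin {^*}\mathbb{R}_{\text{fin}}\} = \bigcap_{n \in \mathbb{N}} F^{-1}\bigl({^*}\{x \in \mathbb{R} : \abs{x} > n\}\bigr).
\]
Each set in the intersection lies in $\mathcal{F}$ by the assumed measurability of $F$, so $N \in \sigma(\mathcal{F}) \subseteq L(\mathcal{F})$, and by hypothesis $L\mathbb{P}(N) = 0$. Define $\st(F)$ to be $0$ on $N$ (any fixed real value works, since this only changes $\st(F)$ on a Loeb-null set and the Loeb measure is complete).

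Next, the key observation is that for any $\omega \in \Omega \setminus N$, the definition of the standard part gives
\[
\st(F)(\omega) \leq a \iff F(\omega) \leq a + \tfrac{1}{n} \text{ for every } n \in \mathbb{N}.
\]
Therefore
\[
\st(F)^{-1}((-\infty, a]) = \Bigl[(\Omega \setminus N) \cap \bigcap_{n \in \mathbb{N}} F^{-1}\bigl({^*}(-\infty, a + \tfrac{1}{n}]\bigr)\Bigr] \cup \bigl(\st(F)^{-1}((-\infty, a]) \cap N\bigr).
\]
Each set $F^{-1}({^*}(-\infty, a + 1/n])$ lies in $\mathcal{F}$ by the measurability hypothesis on $F$, so their countable intersection lies in $\sigma(\mathcal{F})$; intersecting with $\Omega \setminus N \in \sigma(\mathcal{F})$ keeps us in $\sigma(\mathcal{F})$. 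The remaining piece is a subset of the Loeb-null set $N$, hence Loeb measurable by completeness. Thus $\st(F)^{-1}((-\infty, a]) \in L(\mathcal{F})$, which is what was needed.

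The only subtle point is to recognize that the relation $\st(F)(\omega) \leq a$ can be expressed as a \emph{countable} conjunction of internal conditions indexed by standard $n \in \mathbb{N}$; one cannot use, say, an arbitrary infinitesimal $\epsilon$ here because that would produce an external intersection that need not lie in $L(\mathcal{F})$. Once this is noticed, the rest is routine use of the closure of $\sigma(\mathcal{F})$ under countable operations together with the completeness of the Loeb measure.
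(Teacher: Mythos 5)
Your proof is correct, and it is the standard direct argument for this result. The paper itself does not supply a proof of this proposition — it simply cites Ross (Theorem 5.1 in his NATO ASI lectures) — so there is nothing to compare against, but your decomposition into $(\Omega \setminus N) \cap \bigcap_{n} F^{-1}\bigl({^*}(-\infty, a+\tfrac{1}{n}]\bigr)$ plus a subset of the Loeb-null set $N$, together with completeness of the Loeb measure, is exactly how this is normally done. The key equivalence $\st(F)(\omega)\leq a \iff F(\omega)\leq a+\tfrac{1}{n}\ \forall n\in\mathbb{N}$ (for finite $F(\omega)$) is stated and used correctly, and your remark that the intersection must be indexed over \emph{standard} integers to stay inside $\sigma(\mathcal{F})$ identifies the genuine point of care.
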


For a standard measure space $(\Omega, \mathcal{F})$, let $\Prob(\Omega, \mathcal{F})$ be the set of probability measures on $(\Omega, \mathcal{F})$. If $\mathcal{C} \in V(\mathfrak{X})$ is a collection of measure spaces, then $\Prob(\mathcal{C})$ denotes the set of all probability measures on elements in $\mathcal{C}$. Any element in ${^*}\Prob(\mathcal{C})$ is a finitely additive internal probability on an internal measure space. For any $\mathbb{P} \in \Prob(\mathcal{C})$, there is an $\textit{integral operator}$ that takes certain functions (those in the space $L^1(\mathbb{P})$ of integrable real-valued functions on the underlying sample space of $\mathbb{P}$) to their integrals with respect to $\mathbb{P}$. Thus if $(\Omega, \mathcal{F}, \mathbb{P}) \in {^*}\mathcal{C}$ is an internal probability space, we also have the associated space ${^*}L^1(\Omega, \mathbb{P})$ of ${^*}$-integrable functions. 

For any  ${^*}$-integrable $F: \Omega \rightarrow {^*}\mathbb{R}$, one then has $\starint_\Omega F d\mathbb{P} \in {^*}\mathbb{R}$, which we call the ${^*}$-\textit{integral} of $F$ over $(\Omega, \mathbb{P})$. This ${^*}$-integral on ${^*}L^1(\Omega)$ inherits many properties (an important one being linearity) from the ordinary integral by transfer. If $F$ is finite almost surely with respect to the corresponding Loeb measure, then $\st(F)$ is Loeb measurable by Proposition \ref{Loeb measurability}. In that case, it is interesting to study the relation between the ${^*}$-integral of $F$ and the Loeb integral of $\st({^*}F)$. The following result covers this for a useful class of functions (see Ross \cite[Theorem 6.2, p.110]{Ross_NATO} for a proof):
\begin{theorem}\label{S-integrable TFAE}
Suppose $(\Omega, \mathcal{F}, \mathbb{P})$ is an internal probability space and $F \in {^*}L^1(\Omega)$ is such that $L\mathbb{P}(F \in {^*}\mathbb{R}_{\text{fin}}) = 1$. Then the following are equivalent:
\begin{enumerate}[(1)]
    \item\label{S1} $\starint_\Omega \abs{F} d\mathbb{P} \in {^*}\mathbb{R}_{\text{fin}}$, and 
    $$\st\left(\starint_\Omega \abs{F} d\mathbb{P} \right) = \lim_{m \rightarrow \infty} \st\left(\starint_\Omega  \abs{F}\mathbbm{1}_{\{\abs{F} \leq m\}} \right).$$
    
    \item\label{S2} For every $M > \mathbb{N}$, we have $\st\left(\starint_\Omega \abs{F} \mathbbm{1}_{\{\abs{F} > M\}}d \mathbb{P}\right) = 0$.
    \item\label{S3} $\starint_\Omega \abs{F} d\mathbb{P} \in {^*}\mathbb{R}_{\text{fin}}$; and for any $B \in \mathcal{F}$ we have: 
    $$\mathbb{P}(B) \approx 0 \Rightarrow \starint_\Omega \abs{F} \mathbbm{1}_B d\mathbb{P} \approx 0.$$
    \item\label{S4} $\st(F)$ is Loeb integrable, and $\st\left(\starint_\Omega \abs{F} d\mathbb{P} \right) = \int_\Omega \abs{\st(F)} dL\mathbb{P}$.
\end{enumerate}
\end{theorem}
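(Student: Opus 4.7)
The plan is to close the cycle $\eqref{S1}\Rightarrow\eqref{S2}\Rightarrow\eqref{S3}\Rightarrow\eqref{S4}\Rightarrow\eqref{S1}$, with the equivalence of \eqref{S1} and \eqref{S2} done first because it is essentially a restatement using limits of real sequences. Throughout, by \emph{$^*$-linearity} (i.e.\ the transfer of additivity of the integral), we may split
\begin{equation*}
\starint_\Omega |F|\, d\mathbb{P} = \starint_\Omega |F|\mathbbm{1}_{\{|F|\le m\}}\, d\mathbb{P} + \starint_\Omega |F|\mathbbm{1}_{\{|F|> m\}}\, d\mathbb{P}
\end{equation*}
for every $m \in {^*}\mathbb{N}$, and this will be the main splitting used repeatedly.

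For \eqref{S1}$\Leftrightarrow$\eqref{S2}, I would define the internal sequence $a_m \defeq \starint_\Omega |F|\mathbbm{1}_{\{|F|>m\}}\, d\mathbb{P}$ for $m \in {^*}\mathbb{N}$. By transfer this sequence is non-increasing in $m$. Assuming $\starint_\Omega |F|\,d\mathbb{P}$ is finite, \eqref{S1} is equivalent to $\lim_{m \to \infty} \st(a_m) = 0$, and by Proposition~\ref{limits of sequences} this is the same as $\st(a_M)=0$ for every $M > \mathbb{N}$, which is exactly \eqref{S2}. The only thing to check is that \eqref{S2} forces $\starint_\Omega |F|\,d\mathbb{P}$ to be finite, which follows by taking $M > \mathbb{N}$ and noting $\starint |F|\,d\mathbb{P} \le M + a_M$ is finite.

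For \eqref{S2}$\Rightarrow$\eqref{S3}, given $B \in \mathcal{F}$ with $\mathbb{P}(B)\approx 0$, I would split
\begin{equation*}
\starint_\Omega |F|\mathbbm{1}_B\, d\mathbb{P} \le \starint_\Omega |F|\mathbbm{1}_{\{|F|>m\}}\, d\mathbb{P} + m\,\mathbb{P}(B)
\end{equation*}
for each finite $m \in \mathbb{N}$; since the second term is infinitesimal and the first has standard part that can be made arbitrarily small by the equivalence of \eqref{S1} and \eqref{S2}, the left side is infinitesimal. Finiteness of $\starint_\Omega |F|\, d\mathbb{P}$ also follows from \eqref{S2} as above.

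For \eqref{S3}$\Rightarrow$\eqref{S4}, I would first handle the bounded case: if $|F| \le m$ for some $m \in \mathbb{N}$, then Proposition~\ref{Loeb measurability} gives that $\st(F)$ is Loeb measurable and bounded, and a standard simple-function approximation (plus Proposition~\ref{approximating by *Borel sets}) yields $\st(\starint |F|\,d\mathbb{P}) = \int |\st(F)|\, dL\mathbb{P}$. For the general case, I would truncate: set $F_m \defeq F\mathbbm{1}_{\{|F|\le m\}}$ for $m \in \mathbb{N}$ and apply the bounded case to each $F_m$. Monotone convergence on the standard side gives $\int |\st(F)|\,dL\mathbb{P} = \lim_m \int |\st(F_m)|\,dL\mathbb{P} = \lim_m \st(\starint |F_m|\,d\mathbb{P})$. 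To identify this limit with $\st(\starint |F|\,d\mathbb{P})$, I would use \eqref{S3}: for each $\epsilon \in \mathbb{R}_{>0}$ the set $B_{m,\epsilon}\defeq \{|F|>m\}$ satisfies $\mathbb{P}(B_{m,\epsilon}) \le \frac{1}{m}\starint|F|\,d\mathbb{P}$, which tends to $0$ as $m$ gets large standard; arguing via saturation/overflow one converts the infinitesimal-control hypothesis into smallness of $\starint |F|\mathbbm{1}_{B_{m,\epsilon}}\,d\mathbb{P}$ for large finite $m$. This is where I expect the main technical obstacle to lie --- translating the nonstandard absolute-continuity condition in \eqref{S3} into uniform standard control of the tails.

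Finally, \eqref{S4}$\Rightarrow$\eqref{S1} is a consequence of monotone convergence in the Loeb integral: since $|\st(F)|\mathbbm{1}_{\{|\st(F)|\le m\}} \uparrow |\st(F)|$ pointwise, one has $\int |\st(F)|\,dL\mathbb{P} = \lim_m \int |\st(F)|\mathbbm{1}_{\{|\st(F)|\le m\}}\,dL\mathbb{P}$, and by the bounded case established in the previous step the right-hand side equals $\lim_m \st(\starint |F|\mathbbm{1}_{\{|F|\le m\}}\,d\mathbb{P})$ (with only infinitesimal error on the boundary $\{|F|=m\}$, which one dispatches using that $|\st(F)|$ is finite almost surely). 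This gives exactly \eqref{S1}.
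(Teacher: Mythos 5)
The paper does not prove Theorem~\ref{S-integrable TFAE} at all; it simply cites Ross~\cite{Ross_NATO} for it. So there is no in-paper proof to compare against, and I will assess your argument on its own merits. Your architecture (prove \ref{S1}$\Leftrightarrow$\ref{S2} via the monotone internal tail sequence $a_m \defeq \starint_\Omega \abs{F}\mathbbm{1}_{\{\abs{F}>m\}}d\mathbb{P}$, then run \ref{S2}$\Rightarrow$\ref{S3}$\Rightarrow$\ref{S4}$\Rightarrow$\ref{S1}) is correct and workable; the following two points need attention but neither is fatal.

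First, in \ref{S1}$\Leftrightarrow$\ref{S2} you justify ``$\lim_{m\to\infty}\st(a_m)=0$ iff $\st(a_M)=0$ for all $M>\mathbb{N}$'' by Proposition~\ref{limits of sequences}. That proposition applies to the nonstandard extension of a standard real sequence, but $\st$ is an external operation: the map $m\mapsto\st(a_m)$ is not internal and $\st(a_M)$ is not the $M^{\text{th}}$ term of its extension. What actually rescues the equivalence is precisely the monotonicity you observed: if $\st(a_{m_0})<\epsilon$ for some $m_0\in\mathbb{N}$ then $a_M\le a_{m_0}<2\epsilon$ for every $M>\mathbb{N}$ by transfer of monotonicity; conversely, if $a_M\approx 0$ for every $M>\mathbb{N}$, underflow on the internal set $\{m\in{^*}\mathbb{N}:a_m<\epsilon\}$ produces a standard threshold $m_0$ with $a_m<\epsilon$ for $m\ge m_0$. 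You should cite monotonicity and underflow here, not Proposition~\ref{limits of sequences}. Second, in \ref{S3}$\Rightarrow$\ref{S4} the difficulty you flag (``translating the nonstandard absolute-continuity condition into uniform standard control of the tails'') is real, but the shortest resolution is to observe \ref{S3}$\Rightarrow$\ref{S2} directly: Markov's inequality gives $\mathbb{P}(\abs{F}>M)\le M^{-1}\starint_\Omega\abs{F}\,d\mathbb{P}\approx 0$ for $M>\mathbb{N}$ (using the finiteness in \ref{S3}), so \ref{S3} applied to $B=\{\abs{F}>M\}$ yields $a_M\approx 0$, i.e.\ \ref{S2}. Then the already-established \ref{S2}$\Leftrightarrow$\ref{S1} supplies $\lim_m\st(a_m)=0$, which is exactly what you need after your monotone-convergence step on the Loeb side. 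This keeps the dependency graph clean and avoids the hand-wave about saturation/overflow you inserted there. With those two repairs, and the (acknowledged) boundary bookkeeping in \ref{S4}$\Rightarrow$\ref{S1} that works because the indicators $\mathbbm{1}_{\{\abs{\st(F)}\le m\}}$ and $\mathbbm{1}_{\{\abs{F}\le m\}}$ differ only on $\{\abs{\st(F)}=m\}$, your proof goes through.
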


A function satisfying the conditions in Theorem $\ref{S-integrable TFAE}$ is called $\mathit{S}$\textit{-integrable} on $(\Omega, \mathcal{F}, \mathbb{P})$. The notion of $S$-integrability, first developed by Anderson \cite{Anderson-1976}, is one of the most ubiquitous concepts in nonstandard measure theory. Given a Loeb measurable $f: \Omega \rightarrow \mathbb{R}$, a natural question is when does it occur as the standard part of an internal function. An internal measurable function $F: \Omega \rightarrow {^*}\mathbb{R}$ is called a \textit{lifting} of a Loeb measurable function $f$ if $L\mathbb{P}(\st(F) = f) = 1$. 

The following theorem shows that ${^*}$-integrable functions can be characterized as those possessing $S$-integrable liftings (see Ross \cite[Theorem 6.4, p.111]{Ross_NATO} for a proof).

\begin{theorem}\label{S-integrable lifting}
Let $(\Omega, \mathcal{F}, \mathbb{P})$ be an internal probability space and let $(\Omega, L(\mathcal{F}), L(\mathbb{P}))$ be the associated Loeb space. Suppose $f: \Omega \rightarrow \mathbb{R}$ is Loeb measurable. Then $f$ is Loeb integrable if and only if it has an $S$-integrable lifting.
\end{theorem}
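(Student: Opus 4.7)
The backward direction is essentially immediate from the earlier equivalences: if $F$ is an $S$-integrable lifting of $f$, then $L\mathbb{P}(F \in {}^*\mathbb{R}_{\text{fin}}) = 1$ (because $F \approx f \in \mathbb{R}$ almost surely), so Theorem \ref{S-integrable TFAE}\ref{S4} applies to give that $\st(F) = f$ is Loeb integrable. The substance of the proof lies in the forward direction, which I will carry out in three stages.

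\textbf{Stage 1 (start with any lifting).} I will first invoke the general lifting theorem: every Loeb measurable $f\co \Omega \to \mathbb{R}$ admits an internal measurable lifting $F\co \Omega \to {}^*\mathbb{R}$. The standard argument (using Proposition \ref{approximating by *Borel sets}) approximates the level sets of a simple approximant of $f$ by sets in $\mathcal{F}$, builds internal simple functions $F_n$ with $L\mathbb{P}(|F_n - f| > 1/n) < 1/n$, and then uses countable saturation to extract an internal $F$ with $\st(F) = f$ a.s. Note that $F$ itself need not be $S$-integrable: $|F|$ may have infinite ${}^*$-integral, even though $f$ is Loeb integrable.

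\textbf{Stage 2 (truncate at a well-chosen infinite level).} Set $L \defeq \int_\Omega |f|\, dL\mathbb{P} < \infty$ and consider the internal sequence
\begin{equation*}
b_m \defeq \starint_\Omega |F|\,\mathbbm{1}_{\{|F| \leq m\}}\, d\mathbb{P}, \qquad m \in {}^*\mathbb{N},
\end{equation*}
which is nondecreasing by transfer. For each \emph{standard} $m \in \mathbb{N}$, the function $F\,\mathbbm{1}_{\{|F| \leq m\}}$ is internally bounded by $m$, hence trivially $S$-integrable (via \ref{S3} of Theorem \ref{S-integrable TFAE}), with standard part $f\,\mathbbm{1}_{\{|f| \leq m\}}$ a.s. (outside a Loeb null set handling the boundary $\{|f|=m\}$). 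Hence $\st(b_m) = \int_\Omega |f|\,\mathbbm{1}_{\{|f| \leq m\}}\, dL\mathbb{P} \to L$ as $m \to \infty$ in $\mathbb{N}$, by monotone convergence. In particular $b_m \leq L + 1$ for every $m \in \mathbb{N}$, so the internal set $C \defeq \{m \in {}^*\mathbb{N}\co b_m \leq L+1\}$ contains $\mathbb{N}$; by overflow (Proposition \ref{Over and under}\ref{overflow}) there exists $M > \mathbb{N}$ with $b_M \leq L+1$. Since $b_m$ is nondecreasing and $b_n \geq L - 1/k$ for suitable $n \in \mathbb{N}$ depending on $k$, we also get $b_M \geq L - 1/k$ for every $k \in \mathbb{N}$, so $\st(b_M) = L$.

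\textbf{Stage 3 (verify the modified function is an $S$-integrable lifting).} Define $\tilde F \defeq F\,\mathbbm{1}_{\{|F| \leq M\}}$. Because $f$ is real-valued $L\mathbb{P}$-a.s., the set $\{F \in {}^*\mathbb{R}_{\text{fin}}\}$ has full Loeb measure, and on that set $|F| \leq M$ holds automatically (since $M > \mathbb{N}$), whence $\tilde F = F$ a.s. Thus $\st(\tilde F) = f$ a.s. and $L\mathbb{P}(\tilde F \in {}^*\mathbb{R}_{\text{fin}}) = 1$, so $\tilde F$ is a lifting of $f$. Moreover $\starint_\Omega |\tilde F|\, d\mathbb{P} = b_M \approx L = \int_\Omega |\st(\tilde F)|\, dL\mathbb{P}$, so $\tilde F$ satisfies condition \ref{S4} of Theorem \ref{S-integrable TFAE}, and therefore (by the equivalences there) is $S$-integrable.

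\textbf{Main obstacle.} The only delicate point is the selection of the infinite truncation level $M$ in Stage 2: one must know that the "blow-up" of the internal sequence $b_m$ is confined to the infinite part of ${}^*\mathbb{N}$. Monotonicity of $b_m$ together with the overflow principle handles this cleanly, but this is the one place where some care beyond bookkeeping is required. Stage 1 invokes a standard result whose proof I only sketch, and Stage 3 is a direct bookkeeping verification against Theorem \ref{S-integrable TFAE}.
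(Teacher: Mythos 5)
The paper itself offers no proof of this theorem (it simply cites Ross), so I will assess your argument on its own merits. The backward direction, Stage~1, and Stage~3 are sound, and the overall plan — take an arbitrary measurable lifting $F$, then truncate at a well-chosen infinite level $M$ — is the standard route. The gap is in Stage~2. Overflowing $C = \{m : b_m \leq L+1\}$ gives you $b_M \leq L+1$, and monotonicity gives $b_M \geq L - 1/k$ for all standard $k$; together these yield only $L \leq \st(b_M) \leq L+1$, \emph{not} $\st(b_M) = L$. Nothing in your argument pins $b_M$ to the bottom of that interval, and if $\st(b_M) > L$ strictly then $\tilde F$ fails condition \ref{S4} of Theorem~\ref{S-integrable TFAE}: you would have $\starint_\Omega \abs{\tilde F}\, d\mathbb{P} \approx \st(b_M) \neq L = \int_\Omega \abs{\st(\tilde F)}\, dL\mathbb{P}$, so Stage~3 as written does not close.

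The fix is short and uses only information you already have. Since $\abs{\st(F\,\mathbbm{1}_{\{\abs{F}\leq m\}})} \leq \abs{f}$ pointwise $L\mathbb{P}$--a.s.\ for every standard $m$ (this holds irrespective of whether $\{\abs{f}=m\}$ is null — your parenthetical claim that the boundary is a Loeb null set is not true in general, but it is also not needed, as the one-sided inequality survives), you have $\st(b_m) \leq L$, hence $b_m < L + 1/m$, for every standard $m \geq 1$. So overflow the internal set $C' \defeq \{m \in {}^*\mathbb{N} : b_m \leq L + 1/m\}$, which contains $\mathbb{N}_{\geq 1}$, to obtain $M > \mathbb{N}$ with $b_M \leq L + 1/M \approx L$. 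Combined with your monotonicity lower bound this yields $\st(b_M) = L$, and the verification of \ref{S4} in Stage~3 then goes through. (Equivalently, saturation applied to the family $D_k \defeq \{m > k : b_m \leq L + 1/k\}$, each containing a tail of $\mathbb{N}$, produces such an $M$.)
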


We finish our review of basic nonstandard methods with the following remark about the nature of the standard universe we are extending in this paper. 

\begin{remark}\label{superstructure remark}
Let $\mathfrak{X}$ be a set of urelements and let $V(\mathfrak{X})$ be its superstructure. As discussed earlier, we fix a sufficiently saturated nonstandard extension of $V(\mathfrak{X})$. In this paper, we work with measures defined on a sequence of measure spaces, and want to construct a natural Loeb measure on any element in the nonstandard extension of such a sequence. One issue in doing so could be that the measure spaces might not all lie in a single iterated power set over $\mathfrak{X}$ (in which case, we cannot think of the sequence of measure spaces as an element of $V(\mathfrak{X})$). In particular, this would be an issue if our measure spaces were the Borel spaces $(\mathbb{R}^n, \mathcal{B}(\mathbb{R}^n))$ and $\mathfrak{X}$ was the set of real numbers. To get around this difficulty, we take a set $\mathfrak{X}$ that contains (copies of) $\mathbb{R}^n$ for each $n \in \mathbb{N}$.
\end{remark}
\end{section}

\section{A Quick nonstandard proof of Poincar\'e's theorem}\label{section 2}
Using the nonstandard characterization of limit points, Poincar\'e's theorem is essentially a statement about the Loeb measure of the fiber (in the hyperfinite-dimensional sphere $S^{N-1}(\sqrt{N})$ for $N > \mathbb{N}$) of a finite-dimensional set equaling its Gaussian measure. In a more general setting, we analyze this type of phenomenon in the next subsection. These results are routine but essential in setting up later proofs. 

\subsection{When a Loeb measure matches up with a standard measure on a subspace}
In what follows, there will be a measure space $(E, \mathcal{E})$ such that we assume $\mathfrak{X}$ to contain copies of $E^n$ for all $n \in \mathbb{N}$. The corresponding product sigma-algebra on $E^n$ will be denoted by $\mathcal{E}_n$. Recall that we will be working with a sufficiently saturated nonstandard extension of the superstructure $V(\mathfrak{X})$ over $\mathfrak{X}$. Let $k \in \mathbb{N}$. For $n \in \mathbb{N}_{\geq k}$, if $\Omega \in \mathcal{E}_n$ and $\nu$ is a measure on the induced sub-sigma-algebra on $\Omega$, then for any $B \in \mathcal{E}_k$, we denote $\nu(\Omega \cap (B \times E^{n - k}))$ by $\nu(B)$. Similarly, we can talk about integrating a measurable function $f\co E^k \to \mathbb{R}$ over $\Omega$ by extending $f$ canonically to $E^n$.

\begin{proposition}\label{Borel equivalence}
Let $\Omega \in {^*}V(\mathfrak{X})$ be such that $\Omega \subseteq {^*}E^N$ for some $N \in {^*}\mathbb{N}$. Let $\mathcal{E}$ be a sigma-algebra on $E$, and let $\mathcal{E}_k$ denote the corresponding product sigma-algebra on $E^k$ for each $k \in \mathbb{N}$. Let ${^*}\mathcal{E}_N$ denote the corresponding internal algebra on ${^*}E^N$ (defined by extension of the sequence $\{\mathcal{E}_k\}_{k \in \mathbb{N}}$, which is an element of $V(\mathfrak{X})$ when viewed as a function on $\mathbb{N}$). Let $\mathcal{F}$ be the restriction of ${^*}\mathcal{E}_N$ to $\Omega$. 

Fix $k \in \mathbb{N}$ and suppose $\mathbb{P} \in \Prob(E^k, \mathcal{E}_k)$. Let $\nu \in {^*}\Prob(\Omega, \mathcal{\mathcal{F}})$. If $L\nu$ is the corresponding Loeb measure, and if $N \geq k$, then:
\begin{gather}
    \int_{\Omega} \st({^*}f) dL\nu = \int_{E^k} f d\mathbb{P} \text{ for all bounded measurable } f\co E^k \rightarrow \mathbb{R} \label{Prop 1}\\
     \Updownarrow \nonumber \\
    L\nu ({^*}B) = \mathbb{P}(B) \text{ for all } B \in \mathcal{E}_k. \label{prop 1'}
\end{gather}
\end{proposition}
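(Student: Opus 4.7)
The plan is to use the standard measure-theoretic bootstrap strategy (indicators, then simple functions, then bounded measurable functions), with the identification $\st({}^*\mathbbm{1}_B) = \mathbbm{1}_{{}^*B}$ serving as the bridge between the two statements.

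For the forward direction $\eqref{Prop 1} \Rightarrow \eqref{prop 1'}$, fix $B \in \mathcal{E}_k$ and apply the hypothesis to $f = \mathbbm{1}_B$, which is bounded and measurable. By transfer, ${}^*\mathbbm{1}_B$ is the characteristic function of ${}^*(B \times E^{N-k})$; restricted to $\Omega$ and using the abused notation introduced before the proposition, this is $\mathbbm{1}_{{}^*B}$. Since the function takes only the values $0$ and $1$, the standard part map leaves it unchanged, and the left side of \eqref{Prop 1} becomes $L\nu({}^*B)$ while the right side is $\mathbb{P}(B)$. This is precisely \eqref{prop 1'}.

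For the converse, I would argue in three stages. First, indicator functions are handled by reading the previous paragraph in reverse. Second, for a simple function $f = \sum_{i=1}^m c_i \mathbbm{1}_{B_i}$, transfer gives ${}^*f = \sum_{i=1}^m c_i \mathbbm{1}_{{}^*B_i}$, which is already $\mathbb{R}$-valued and finite, so the standard part commutes with the finite sum, and linearity of both the Loeb integral and the standard integral reduces the claim to the indicator case. Third, for a bounded measurable $f\co E^k \to \mathbb{R}$ with $\abs{f} \leq M$, choose simple functions $f_n$ with $\norm{f - f_n}_\infty \leq 1/n$; by transfer, $\abs{{}^*f - {}^*f_n} \leq 1/n$ pointwise on ${}^*E^N$, so $\abs{\st({}^*f) - \st({}^*f_n)} \leq 1/n$ uniformly on $\Omega$. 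Loeb measurability of $\st({}^*f)$ is supplied by Proposition \ref{Loeb measurability} (the extension of $f$ along the projection is measurable on $(E^n,\mathcal{E}_n)$ for each $n$, so ${}^*f$ is internally measurable relative to $\mathcal{F}$ by transfer, and it is bounded by $M$ hence almost surely finite), and the uniform bound also gives Loeb integrability. Combining the uniform estimates on both the Loeb side and the standard side ($\abs{\int f\, d\mathbb{P} - \int f_n\, d\mathbb{P}} \leq 1/n$) and passing $n \to \infty$ yields \eqref{Prop 1}.

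The only genuine obstacle is notational bookkeeping: keeping straight that $\mathbb{P}(B)$, $L\nu({}^*B)$, and the integrals of $f$ over $E^k$ and over $\Omega \subseteq {}^*E^N$ are all interpreted through the convention that a measurable function on $E^k$ is silently extended by precomposition with the projection onto the first $k$ coordinates, and that ${}^*B$ inside $\Omega$ means $\Omega \cap {}^*(B \times E^{N-k})$. Once these identifications are tracked, each step is routine and the approximation $f_n \to f$ causes no trouble because uniform convergence automatically preserves both the Loeb integral (via the uniform bound on $\st({}^*f) - \st({}^*f_n)$) and the standard integral.
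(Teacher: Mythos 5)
Your proposal is correct and follows essentially the same route as the paper's proof: forward direction via indicator functions, converse via the bootstrap from indicators to simple functions to bounded measurable functions using uniform approximation and transfer, with Loeb measurability of $\st({}^*f)$ supplied by Proposition \ref{Loeb measurability}. The only cosmetic difference is that the paper invokes the dominated convergence theorem to pass to the limit on the $\mathbb{P}$-side, whereas you use the uniform bound directly on both sides, which is marginally leaner but not a distinct strategy.
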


\begin{proof}
If $f \co E^k \to \mathbb{R}$ is bounded measurable, then $\st({^*}f)$ is Loeb measurable on $\Omega$ by Proposition \ref{Loeb measurability}. Hence the left side of equation \eqref{Prop 1} is well-defined. 

The forward implication is immediate by taking $f = \mathbbm{1}_B$, the indicator function of $B \in \mathcal{E}_k$. For the reverse implication, assume that $L\nu ({^*}B) = \mathbb{P}(B) \text{ for all } B \in \mathcal{E}_k$ (that is, indicator functions of measurable sets satisfy (\ref{Prop 1})). The set of functions satisfying (\ref{Prop 1}) is closed under taking finite $\mathbb{R}$-linear combinations, and hence all simple functions satisfy (\ref{Prop 1}). Fix a bounded measurable function $f\co E^k \to \mathbb{R}$. By standard measure theory (see, for example, Folland \cite[Theorem 2.10]{Folland}), there is a sequence $\{f_n\}_{n \in \mathbb{N}}$ of simple functions that converges to $f$ uniformly on $E^k$.

For $\epsilon \in \mathbb{R}_{>0}$, find $n_\epsilon \in \mathbb{N}$ such that we have the following inequality. $$\abs{f_n(x) - f(x)} < \epsilon \text{ for all } x \in E^k \text{ and } n \in \mathbb{N}_{\geq n_{\epsilon}}.$$ 
By transfer, for all $n \in \mathbb{N}_{\geq n_{\epsilon}}$, we get $\abs{{^*}f_n(x) - {^*}f(x)} < \epsilon$ on ${{^*}E}^k$. Hence, $$\abs{\st({^*}f_n(x)) - \st({^*}f(x))} \leq \epsilon \text{ for all } n \in \mathbb{N}_{\geq n_{\epsilon}} \text{ and } x \in {^*}E^k.$$ 

As a consequence, we get: 
\begin{align*}
    &\abs{\int_{\Omega} \st({^*}f) dL\nu - \int_{\Omega} \st({^*}f_n) dL\nu } \leq \epsilon \text{ for all } n \in \mathbb{N}_{\geq n_{\epsilon}}, \\
     \text{that is, } &\abs{\int_{\Omega} \st({^*}f) dL\nu - \int_{E^k} f_n d\mathbb{P}} \leq \epsilon \text{ for all } n \in \mathbb{N}_{\geq n_{\epsilon}}.
\end{align*}
But $\lim_{n \rightarrow \infty} \int_{E^k} f_n d\mathbb{P} = \int_{E^k} f d\mathbb{P}$, by dominated convergence theorem. Since $\epsilon \in \mathbb{R}_{>0}$ is arbitrary, this implies $\int_{\Omega} \st({^*}f) dL\nu = \int_{E^k} f d\mathbb{P}$, completing the proof.
\end{proof}

The hypothesis in Proposition \ref{Borel equivalence} is an abstract rendering of the premise of our central problem about limits of spherical measures. Indeed, we may think of $E$ as $\mathbb{R}$, the space $\Omega$ as the hyperfinite dimensional sphere $S^{N-1}(\sqrt{N})$ for some $N > \mathbb{N}$, and $\mathbb{P}$ as the standard Gaussian measure $\mu$. Then, \eqref{prop 1'} is the nonstandard characterization of \eqref{Poincare's first limit}, while \eqref{Prop 1} corresponds to \eqref{Poincare limit}. To strengthen this theme, in the next subsection, we will take a standard sequence of probability spaces and replace $\Omega$ by the $N^{\text{th}}$ term (for any $N > \mathbb{N}$) of the nonstandard extension of that sequence. We first record some useful implications of Proposition \ref{Borel equivalence} below. 

\begin{corollary}\label{functions are finite almost everywhere}
In the setting of Proposition \ref{Borel equivalence}, suppose \eqref{Prop 1}, and hence \eqref{prop 1'}, hold. Then $$L\nu(\{x \in \Omega: {^*}f(x) \in {^*}\mathbb{R}_{\text{fin}}\}) = 1 \text{ for all measurable } f\co E^k \to \mathbb{R}.$$ 
\end{corollary}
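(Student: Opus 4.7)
The plan is to show that the complementary set $\{x \in \Omega : {^*}f(x) \notin {^*}\mathbb{R}_{\text{fin}}\}$ has $L\nu$-measure zero by expressing it as a countable decreasing intersection of internal sets whose Loeb measures are pinned down by hypothesis \eqref{prop 1'}, and then invoking continuity of $L\nu$ from above.

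First, for each $m \in \mathbb{N}$, let
\[
    A_m \defeq \{x \in E^k : \abs{f(x)} > m\} \in \mathcal{E}_k,
\]
where measurability follows from that of $f$. Under the canonical identification of $f$ with $f \circ \pi^{(N)}_k$ (and, after transfer, of ${^*}f$ with its internal extension to ${^*}E^N$), the set $\{x \in \Omega : \abs{{^*}f(x)} > m\}$ coincides with $\Omega \cap ({^*}A_m \times {^*}E^{N-k})$, which under our notational convention is exactly the set whose $L\nu$-measure is denoted $L\nu({^*}A_m)$. Hypothesis \eqref{prop 1'} therefore gives
\[
    L\nu(\{x \in \Omega : \abs{{^*}f(x)} > m\}) = \mathbb{P}(A_m) \text{ for every } m \in \mathbb{N}.
\]

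Next, observe that
\[
    \{x \in \Omega : {^*}f(x) \notin {^*}\mathbb{R}_{\text{fin}}\} = \bigcap_{m \in \mathbb{N}} \{x \in \Omega : \abs{{^*}f(x)} > m\},
\]
and that this is a decreasing intersection of Loeb measurable sets. Since $f$ is real-valued, $\bigcap_{m \in \mathbb{N}} A_m = \emptyset$, so by continuity of the probability measure $\mathbb{P}$ from above, $\lim_{m \to \infty} \mathbb{P}(A_m) = 0$. Applying continuity from above to the probability measure $L\nu$ on the corresponding decreasing sequence in $\Omega$ then yields
\[
    L\nu(\{x \in \Omega : {^*}f(x) \notin {^*}\mathbb{R}_{\text{fin}}\}) = \lim_{m \to \infty} L\nu(\{x \in \Omega : \abs{{^*}f(x)} > m\}) = \lim_{m \to \infty} \mathbb{P}(A_m) = 0,
\]
and taking complements gives the desired conclusion.

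There is no substantive obstacle here; the only point requiring care is the bookkeeping behind the two conventions at play (extending $f$ from $E^k$ to ${^*}E^N$ via projection, and suppressing the cylinder factor when writing $L\nu({^*}A_m)$), which must be invoked to translate hypothesis \eqref{prop 1'} into information about the level sets of $\abs{{^*}f}$.
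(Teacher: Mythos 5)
Your proof is correct and follows essentially the same strategy as the paper's: apply \eqref{prop 1'} to level sets of $\abs{f}$ and invoke continuity of the two probability measures. The only (cosmetic) difference is that the paper uses the increasing sets $B_n = \{\abs{f} < n\}$ with continuity from below, whereas you use the complementary decreasing sets $A_m = \{\abs{f} > m\}$ with continuity from above.
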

\begin{proof}
If $B_n \defeq \{x \in E^k: \abs{f(x)} < n\}$ for $n \in \mathbb{N}$, then the required probability is 
\begin{align*}
  L\nu \left(\cup_{n \in \mathbb{N}} {^*}B_n \right) &= \lim_{n \rightarrow \infty} L\nu ({^*}B_n) 
    \overset{\eqref{prop 1'}}{=} \lim_{n \rightarrow \infty} \mathbb{P}(B_n) 
    = 1, &~ 
\end{align*}
thus completing the proof. 
\end{proof}

\begin{corollary}\label{from bounded to integrable}
In the setting of Proposition \ref{Borel equivalence}, suppose \eqref{Prop 1} holds. Then, for any $\mathbb{P}$--integrable function $f \co E^k \to \mathbb{R}$, we have that $\st({^*}f)$ is $L\nu$--integrable; and furthermore, 

$$\int_{\Omega} \abs{\st({^*}f)} dL\nu = \int_{E^k} \abs{f} d\mathbb{P} \text{, and } \int_{\Omega} \st({^*}f) dL\nu = \int_{E^k} f d\mathbb{P}.$$
\end{corollary}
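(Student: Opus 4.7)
\emph{Proof proposal.} The plan is to reduce the general $\mathbb{P}$-integrable case to the non-negative integrable case by splitting $f = f^+ - f^-$ and using linearity of both integrals, and then to approximate a non-negative integrable $f$ from below by the bounded measurable truncations $f_m \defeq \min(f,m)$, to each of which Proposition \ref{Borel equivalence} applies directly. The monotone convergence theorem is then available on both sides of the equation, and the result will follow by passing to the limit in $m$.

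First I would verify that $\st({^*}f)$ makes sense $L\nu$-almost everywhere and is Loeb measurable. Since $f\co E^k \to \mathbb{R}$ is measurable, transfer gives that ${^*}f\co {^*}E^k \to {^*}\mathbb{R}$ is internally measurable. Corollary \ref{functions are finite almost everywhere} shows that ${^*}f(x) \in {^*}\mathbb{R}_{\text{fin}}$ for $L\nu$-almost every $x \in \Omega$, so Proposition \ref{Loeb measurability} yields that $\st({^*}f)$ is Loeb measurable.

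For the key step, assume $f \geq 0$ is $\mathbb{P}$-integrable. Each $f_m$ is bounded and measurable, so the hypothesis \eqref{Prop 1} gives
\begin{equation*}
\int_\Omega \st({^*}f_m)\, dL\nu = \int_{E^k} f_m\, d\mathbb{P}.
\end{equation*}
On the standard side, $f_m \uparrow f$ pointwise and MCT gives $\int_{E^k} f_m\, d\mathbb{P} \to \int_{E^k} f\, d\mathbb{P} < \infty$. By transfer, ${^*}f_m(x) = \min({^*}f(x), m)$ for all $x \in {^*}E^k$; thus at any $x \in \Omega$ where ${^*}f(x)$ is finite (a set of full $L\nu$-measure), $\st({^*}f_m(x)) = \min(\st({^*}f(x)), m) \uparrow \st({^*}f(x))$ as $m \to \infty$. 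MCT on the Loeb space then gives $\int_\Omega \st({^*}f_m)\, dL\nu \to \int_\Omega \st({^*}f)\, dL\nu$. Combining the two limits yields $\int_\Omega \st({^*}f)\, dL\nu = \int_{E^k} f\, d\mathbb{P}$, which is finite, so $\st({^*}f)$ is $L\nu$-integrable.

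For general $\mathbb{P}$-integrable $f$, apply the preceding step to $f^+$, $f^-$, and $|f|$ separately. On the set where ${^*}f$ is finite (of full $L\nu$-measure), one has $\st({^*}f) = \st({^*}f^+) - \st({^*}f^-)$ and $|\st({^*}f)| = \st({^*}|f|)$ by elementary properties of the standard part map; linearity of both integrals then delivers the two claimed equalities, and $L\nu$-integrability of $\st({^*}f)$ follows from that of $\st({^*}|f|)$. There is no substantive obstacle here; the only delicate point is the interplay between transfer (which guarantees ${^*}f_m = \min({^*}f, m)$) and the standard part operation on the $L\nu$-conull set where ${^*}f$ is finite, and it is precisely this compatibility that makes MCT applicable on the Loeb side.
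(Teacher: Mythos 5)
Your proof is correct and takes essentially the same route as the paper: approximate $f$ by bounded measurable truncations to which Proposition \ref{Borel equivalence} applies, then pass to the limit via the monotone convergence theorem on both the Loeb space and the $\mathbb{P}$-space. The only cosmetic differences are that you truncate via $\min(f,m)$ rather than $f\mathbbm{1}_{\{|f|\le m\}}$, and you reduce the signed case to the nonnegative case by writing $f=f^+-f^-$ and invoking linearity, whereas the paper uses MCT for $|f|$ and then the dominated convergence theorem (dominated by $|\st({^*}f)|$) for $f$ directly; both routes are standard and equally valid once integrability of $|\st({^*}f)|$ is in hand.
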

\begin{proof}
We see that $\st({^*}f)$ is Loeb measurable on $\Omega$ by Corollary \ref{functions are finite almost everywhere} and Proposition \ref{Loeb measurability}. Also, by Corollary \ref{functions are finite almost everywhere}, $\st({^*}f) \mathbbm{1}_{\{\abs{{^*}f} < n\}} \uparrow \st({^*}f)$ $L\nu$--almost surely. Hence, we have: 
\begin{align*}
    \int_{\Omega} \abs{\st({^*}f)} dL\nu &= \lim_{n \rightarrow \infty} \int_{\Omega} \st({^*}\abs{f}) \cdot \mathbbm{1}_{\{{^*}\abs{f} \leq n\}} dL\nu  \\
    &= \lim_{n \rightarrow \infty} \int_{E^k} \abs{f} \cdot \mathbbm{1}_{\{\abs{f} \leq n\}}  d\mathbb{P}\\
    &= \int_{E^k} \abs{f} d\mathbb{P} < \infty.  
\end{align*}

The first line follows from the monotone convergence theorem (applied on the Loeb space $(\Omega, L(\mathcal{F}), L\nu)$), the second line follows from \eqref{Prop 1}, and the third line follows from the monotone convergence theorem (applied on the probability space $(E^k, \mathcal{E}_k, \mathbb{P})$). 

Now, since $\lim_{n \rightarrow \infty} \left(\st({^*}f) \cdot \mathbbm{1}_{\{\abs{{^*}f} < n\}}\right) = \st({^*}f) ~~L\nu$-almost surely (using Corollary \ref{functions are finite almost everywhere}), and since $\abs{\st({^*}f) \cdot \mathbbm{1}_{\{\abs{{^*}f} < n\}}} \leq \abs{\st({^*}f)} \in L^1(\Omega, L\nu)$, it follows that:
\begin{align*}
     \int_{\Omega} {\st({^*}f)} dL\nu &= \lim_{n \rightarrow \infty} \int_{\Omega} \st({{^*}f}) \cdot \mathbbm{1}_{\{\abs{{^*}f} \leq n\}} dL\nu \\
     &= \lim_{n \rightarrow \infty} \int_{E^k} {f} \cdot \mathbbm{1}_{\{\abs{f} \leq n\}}  d\mathbb{P} \\
     &= \int_{E^k} {f} d\mathbb{P}.
\end{align*}

The first line follows from the dominated convergence theorem (applied on the Loeb space $(\Omega, L(\mathcal{F}), L\nu)$), the second line follows from \eqref{Prop 1}, and the third line follows from the dominated convergence theorem (applied on the measure space $(E^k, \mathcal{E}_k, \mathbb{P})$). This completes the proof.
\end{proof}

\begin{corollary}\label{TFAE'}
In the setting of Proposition \ref{Borel equivalence}, the following are equivalent:

\begin{enumerate}[(1)]
    \item\label{itm1'} $\int_{\Omega} \st({^*}f) dL\nu = \int_{E^k} f d\mathbb{P} \text{ for all bounded measurable } f\co E^k \to \mathbb{R}$.    
    \item\label{itm2'} $L\nu ({^*}B) = \mathbb{P}(B) \text{ for all } B \in \mathcal{E}_k$.
    \item\label{itm3'} $L\nu({^*}B) \leq \mathbb{P}(B)$ for all $B \in \mathcal{E}_k$.
    \item\label{itm4'} $L\nu({^*}B) \geq \mathbb{P}(B)$ for all $B \in \mathcal{E}_k$.
\end{enumerate}
\end{corollary}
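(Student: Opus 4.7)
The plan is to use Proposition \ref{Borel equivalence} as a black box for the equivalence \eqref{itm1'} $\Leftrightarrow$ \eqref{itm2'}, and then to reduce \eqref{itm3'} and \eqref{itm4'} to \eqref{itm2'} by a complementation argument. The implications \eqref{itm2'} $\Rightarrow$ \eqref{itm3'} and \eqref{itm2'} $\Rightarrow$ \eqref{itm4'} are immediate, since equality implies both inequalities.

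For the reverse directions, the main observation is that for any $B \in \mathcal{E}_k$, the convention $\nu({^*}B) = \nu(\Omega \cap {^*}(B \times E^{N-k}))$ assigns $B$ and its complement $B^c := E^k \setminus B$ values summing to $1$. Concretely, the sets $B \times E^{N-k}$ and $B^c \times E^{N-k}$ partition $E^N$; by transfer, their nonstandard extensions partition ${^*}E^N$, so their intersections with $\Omega$ partition $\Omega$. Since $\nu(\Omega) = 1$, this gives
\begin{equation*}
L\nu({^*}B) + L\nu({^*}B^c) = 1 = \mathbb{P}(B) + \mathbb{P}(B^c).
\end{equation*}

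Now suppose \eqref{itm3'} holds. Apply the inequality to both $B$ and $B^c$ to obtain $L\nu({^*}B) \leq \mathbb{P}(B)$ and $L\nu({^*}B^c) \leq \mathbb{P}(B^c)$. Adding these yields $1 \leq 1$, so both inequalities must in fact be equalities. Hence $L\nu({^*}B) = \mathbb{P}(B)$, which is \eqref{itm2'}. The proof that \eqref{itm4'} $\Rightarrow$ \eqref{itm2'} is identical, with the direction of the inequality reversed.

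I do not anticipate a genuine obstacle here; the only subtlety is pedantic, namely confirming that complementation is respected after nonstandard extension and after restriction to $\Omega$, which follows directly from the transfer principle and the fact that $\Omega \subseteq {^*}E^N$ is the ambient sample space for $\nu$.
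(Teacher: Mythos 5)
Your argument is correct and follows the same route as the paper: invoke Proposition \ref{Borel equivalence} for \eqref{itm1'} $\Leftrightarrow$ \eqref{itm2'}, note that \eqref{itm2'} trivially gives \eqref{itm3'} and \eqref{itm4'}, and then recover \eqref{itm2'} from \eqref{itm3'} (or \eqref{itm4'}) by applying the given inequality to the complement $E^k \setminus B$. The paper phrases the last step as ``the inequality on the complement implies the reverse inequality on $B$'' rather than summing two inequalities to force equality, but that is a cosmetic difference; the complementation idea is identical.
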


\begin{proof}
\ref{itm1'} $\Leftrightarrow$ \ref{itm2'} follows from Proposition \ref{Borel equivalence}. Also, \ref{itm3'} and \ref{itm4'} follow from \ref{itm2'} immediately. Conversely, assume \ref{itm3'}. For any Borel set $B \subseteq E^k$, we have 
\begin{align}
L\nu ({^*}B) &\leq \mathbb{P}(B) \text{, and } \label{O}\\
L\nu ({^*}E^k \backslash {^*}B) \leq \mathbb{P}(E^k \backslash B) \Rightarrow L\nu({^*}B) &\geq \mathbb{P}(B). \label{OC}
\end{align}
Combining \eqref{O} and \eqref{OC} gives \ref{itm2}. The proof of \ref{itm4'} $\Rightarrow$ \ref{itm2'} is similar. 
\end{proof}

We end this subsection with the remark that if $E$ is a Hausdorff topological space equipped with its Borel sigma-algebra, and if the probability measure $\mathbb{P}$ is Radon, then \eqref{Prop 1} and \eqref{prop 1'} are both equivalent to the Loeb measure $L\nu$ agreeing with $\mathbb{P}$ on the nonstandard extensions of all open (or all compact) subsets of $E$.

\begin{proposition}\label{TFAE}
In the setting of Proposition \ref{Borel equivalence}, suppose $E$ is a Hausdorff topological space and let $\mathcal{B}(E^k)$ be the Borel sigma-algebra on $E^k$. If $\mathbb{P}$ is a Radon probability measure on $E^k$, then the following are equivalent:

\begin{enumerate}[(1)]
    \item\label{itm1} $\int_{\Omega} \st({^*}f) dL\nu = \int_{E^k} f d\mathbb{P} \text{ for all bounded Borel measurable } f\co E^k \to \mathbb{R}$.    
    \item\label{itm2} $L\nu ({^*}B) = \mathbb{P}(B) \text{ for all } B \in \mathcal{B}(E^k)$.
     \item\label{itm3} $L\nu({^*}B) \leq \mathbb{P}(B)$ for all $B \in \mathcal{B}(E^k)$.
     \item\label{itm4} $L\nu({^*}B) \geq \mathbb{P}(B)$ for all $B \in \mathcal{B}(E^k)$.
    \item\label{itm5} $L\nu ({^*}O) = \mathbb{P}(O) \text{ for all open sets } O \subseteq {E}^k$.    
    \item\label{itm6} $L\nu ({^*}C) = \mathbb{P}(C) \text{ for all compact sets } C \subseteq {E}^k$.
\end{enumerate}
\end{proposition}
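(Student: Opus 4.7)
The plan is to leverage Corollary \ref{TFAE'}, which already establishes the equivalence of the first four conditions \ref{itm1}--\ref{itm4}. It therefore remains only to connect the new conditions \ref{itm5} and \ref{itm6} (which concern open and compact sets rather than all Borel sets) to this common equivalence class. Since open and compact subsets of $E^k$ are Borel, the implications \ref{itm2} $\Rightarrow$ \ref{itm5} and \ref{itm2} $\Rightarrow$ \ref{itm6} are immediate. Thus the content of the proposition is that \ref{itm5} and \ref{itm6} individually suffice to recover the full Borel-level agreement of $L\nu$ with $\mathbb{P}$, and here is precisely where the Radon hypothesis is needed.

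To close the loop, I would prove \ref{itm5} $\Rightarrow$ \ref{itm3} and \ref{itm6} $\Rightarrow$ \ref{itm4}. For the first, fix $B \in \mathcal{B}(E^k)$ and $\epsilon \in \mathbb{R}_{>0}$. By outer regularity of the Radon measure $\mathbb{P}$, choose an open $O \supseteq B$ with $\mathbb{P}(O) \leq \mathbb{P}(B) + \epsilon$. Applying the transfer principle to the sentence ``$\forall x \in E^k ~ (x \in B \rightarrow x \in O)$'' gives ${^*}B \subseteq {^*}O$, and hence by monotonicity of $L\nu$,
\[
L\nu({^*}B) \leq L\nu({^*}O) = \mathbb{P}(O) \leq \mathbb{P}(B) + \epsilon.
\]
Letting $\epsilon \downarrow 0$ yields \ref{itm3}. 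The implication \ref{itm6} $\Rightarrow$ \ref{itm4} proceeds symmetrically: for $B \in \mathcal{B}(E^k)$ and $\epsilon \in \mathbb{R}_{>0}$, inner regularity (tightness) provides a compact $C \subseteq B$ with $\mathbb{P}(C) \geq \mathbb{P}(B) - \epsilon$; transfer gives ${^*}C \subseteq {^*}B$, so
\[
L\nu({^*}B) \geq L\nu({^*}C) = \mathbb{P}(C) \geq \mathbb{P}(B) - \epsilon,
\]
and letting $\epsilon \downarrow 0$ yields \ref{itm4}. Combined with Corollary \ref{TFAE'}, this closes the cycle among all six conditions.

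There is not really a significant obstacle here; the argument is essentially a translation of the outer/inner regularity characterizing Radon measures into the nonstandard language via transfer. The only subtlety worth flagging is that one must verify monotonicity ${^*}B \subseteq {^*}O$ (respectively ${^*}C \subseteq {^*}B$) at the transfer step, since $L\nu$ is only defined on internal sets and their $\sigma$-completions, and it is the nonstandard extensions of $B, O, C$ — not the sets themselves — that enter the inequalities.
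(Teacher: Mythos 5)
Your argument is correct, but it routes through the equivalence class (1)--(4) a bit differently than the paper does, and the difference carries a small unflagged hypothesis. The paper proves $\eqref{itm2} \Rightarrow \eqref{itm5} \Rightarrow \eqref{itm6} \Rightarrow \eqref{itm4}$: the step $\eqref{itm5} \Rightarrow \eqref{itm6}$ uses only that compacts are closed in a Hausdorff space, so $C$ and $O := E^k \setminus C$ satisfy ${^*}C = {^*}E^k \setminus {^*}O$ and one can pass to complements; and the step $\eqref{itm6} \Rightarrow \eqref{itm4}$ uses \emph{only} inner regularity (the defining property of Radon measures), taking the supremum of $\mathbb{P}(C)$ over compacts $C \subseteq B$. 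You instead prove $\eqref{itm5} \Rightarrow \eqref{itm3}$ via outer regularity and, separately, $\eqref{itm6} \Rightarrow \eqref{itm4}$ via inner regularity. The second half matches the paper exactly, but the first half invokes outer regularity, which is not part of the definition of a Radon measure used here. That said, for a \emph{finite} Radon measure on a Hausdorff space, outer regularity is a consequence of inner regularity (approximate $E^k \setminus B$ from within by compacts $C_n$; then $O_n := E^k \setminus C_n$ are open supersets of $B$ with $\mathbb{P}(O_n) \to \mathbb{P}(B)$, using again that compacts are closed). With that observation made explicit, your proof is complete and is roughly a reshuffling of the same ingredients; the paper's version is marginally leaner in that it never needs to assert outer regularity at all.
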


\begin{proof}
The equivalence of \ref{itm1}, \ref{itm2}, \ref{itm3}, and \ref{itm4} has been established without any conditions on $\mathbb{P}$ in the previous corollary. Also, \ref{itm2} $\Rightarrow$ \ref{itm5} is immediate. To complete the proof, we will show that \ref{itm5} $\Rightarrow$ \ref{itm6} and \ref{itm6} $\Rightarrow$ \ref{itm4}. 

To see \ref{itm5} $\Rightarrow$ \ref{itm6}, note that if $C$ is a compact subset of the Hausdorff space $E^k$, then $C$ is closed, so that the subset $O \defeq E^k \backslash C$ is open. By using the fact that ${^*}C = {^*}E^k \backslash {^*}O$, and then applying \ref{itm5} to $O$, we obtain the following:
\begin{align*}
    L\nu({^*}C) = 1 - L\nu({^*}O) &= 1 - \mbp(O) = \mbp(C).
\end{align*}

We now prove \ref{itm6} $\Rightarrow$ \ref{itm4}. To that end, take any $B \in \mathcal{B}(E^k)$. For any compact subset $C \subseteq B$, we have ${^*}C \subseteq {^*}B$, so that \ref{itm6} implies the following:
\begin{align*}
    L\nu({^*}B) \geq L\nu({^*}C) = \mbp(C) \text{ for all compact subsets $C$ of $B$}.
\end{align*}
Taking supremum over all compact subsets of $B$ and using the fact that the measure $\mbp$ is Radon, we thus obtain the desired inequality as follows:
\begin{align*}
    L\nu({^*}B) \geq \sup \{\mbp(C): C \text{ is a compact subset of $B$}\} = \mbp(B).
\end{align*}
\end{proof}

\subsection{Basic facts about surface area measures and their nonstandard counterparts}
In this subsection, we review three different ways to think about the uniform surface area measure on spheres in Euclidean spaces. One aim of our review is to explain the corresponding internal probability measures on hyperfinite dimensional spheres that we obtain by transfer. We refer to Matilla \cite[Chapter 3]{spherical_measures} and Sengupta \cite[Section 4]{Sengupta} for basic properties of spherical surface area measures.

For each $n \in \mathbb{N}$, we let $\mathcal{B}_n = \mathcal{B}(\mathbb{R}^n)$, the Borel sigma-algebra on $\mathbb{R}^n$, and $O(n)$ be the set of all orthogonal linear transformations of $\mathbb{R}^n$. Let $\mathcal{S}_0$ be the set of all spheres centered at the origin (in any dimension $n \in \mathbb{N}$ and of any radius $r \in \mathbb{R}_{>0}$). Since $\mathfrak{X}$ contains copies of all Euclidean spaces, $\mathcal{S}_0$ is an element of $V(\mathfrak{X})$. Consider the function $\dim \co \mathcal{S}_0 \to \mathbb{N}$ that takes each sphere $S$ to the smallest dimension $n \in \mathbb{N}$ such that $S \subseteq \mathbb{R}^n$. We are being pedantic about the ``smallest dimension'' since we have been identifying (during discussions on measures of sets) a subset $S$ of $\mathbb{R}^n$ with the subset $S \times \mathbb{R}^{n' - n} \subseteq \mathbb{R}^{n'}$ for $n' \in \mathbb{N}_{> n'}$. 

It is known that there is a unique rotation-preserving probability measure on any sphere centered at origin equipped with its Borel sigma-algebra. More formally:
\begin{gather}\nonumber
    \forall S \in \mathcal{S}_0 ~\exists! \bar{\sigma} \in \Prob(S, \mathcal{B}({S})) ~\forall n \in \mathbb{N} \\
    (n = \dim(S)) \rightarrow  ~(\forall R \in O(n) ~\forall A \in \mathcal{B}(S) ~[\bar{\sigma}(R(A)) = \bar{\sigma}(A)]). \label{sigma bar}
\end{gather}

For any $S \in {^*}\mathcal{S}_0$ in the nonstandard universe, the transfer principle implies that the set ${^*}\text{Prob}(S, {^*}\mathcal{B}(S))$ consists of a unique finitely additive internal function, say $\bar{\sigma}_S\co  {^*}\mathcal{B}(S) \to {^*}[0,1]$, that is ${^*}$--rotation preserving and satisfying $\bar{\sigma}_S(S) = 1$. By the usual Loeb measure construction, we get $L\bar{\sigma}_S$ on $\mathcal{L}({^*}\mathcal{B}(S))$ (a sigma-algebra containing $\sigma({^*}\mathcal{B}(\mathcal{S}))$, which we call the \textit{uniform Loeb surface measure} on $S$. As before, we will often drop the superscript $S$ in $\bar{\sigma}_S$ when the sphere is clear from context. 

In finite dimensions, we also have the notion of surface area. For the sphere $S \defeq S^d(R)$ of radius $R \in \mathbb{R}_{>0}$, centered at the origin in $\mathbb{R}^{d+1}$, one can consider the surface area map $\sigma_S \co \mathcal{B}(S) \to \mathbb{R}$, which satisfies the following volume-of-cone formula:

$$\lambda_{d+1}\left(\cup_{0 \leq t \leq 1} tA \right) = \frac{1}{d+1} R \sigma_S(A),$$

where $\lambda_{d+1}$ is the Lebesgue measure on $\mathbb{R}^{d+1}$, and $A \in \mathcal{B}(S)$. This surface area function has the following properties:
\begin{itemize}
\item For any $d \in \mathbb{N}$ and any $R \in \mathbb{R}_{>0}$, we have
        $\sigma_{S^d(R)}(S^d(R)) = c_d \cdot R^{d}$, \text{ where } $c_d = \sigma_{S^d(1)}(S^d(1)) = (d+1) \cdot \frac{\pi^{\frac{d+1}{2}}}{\Gamma\left({\frac{d+1}{2}} + 1\right)} = 2\frac{\pi^{\frac{d+1}{2}}}{\Gamma\left({\frac{d+1}{2}}\right)}$.
\item For any $S \in \mathcal{S}$ and any $A \in \mathcal{B}(\mathcal{S})$, we have $\bar{\sigma}_S(A) = \frac{\sigma_S(A)}{\sigma_S(S)}$, where $\bar{\sigma}_S$ is the rotation preserving probability measure on $S$, as in \eqref{sigma bar}. 
\end{itemize}    

By transfer, we have the notion of ${^*}$-surface area (that is applicable to hyperfinite-dimensional spheres as well) in the nonstandard universe. This could be used as an alternative way to define the uniform Loeb surface measure. 

Yet another way to arrive at the uniform surface area measure on a sphere is by looking at an appropriate pushforward of a Gaussian measure. If $\mu$ is the standard Gaussian measure on $\mathbb{R}^n$ (here $n \in \mathbb{N}$), and $S^{n-1}$ is the unit sphere in $\mathbb{R}^n$, then the rotation invariance of $\mu$ implies that $\mu \circ g^{-1}$ is a rotation invariant probability measure on $S^{n-1}$ (and hence is the same as $\bar{\sigma}$), where   
$$g : \mathbb{R}^n \backslash \{0\} \to S^{n-1} \text{ defined by } g(x) = \frac{x}{\norm{x}}.$$
For spheres centered at origin but having radius $R \in \mathbb{R}_{>0}$, we can use the pushforward through the map $Rg$ (this is scalar multiple by $R$). For instance, for $N > \mathbb{N}$, if $\bar{\sigma}$ is the internal uniform surface area measure on $S^{N-1}(\sqrt{N})$ and $\mu_{(N)}$ is the internal Gaussian measure on ${^*}\mathbb{R}^N$ with mean $\mathbf{0}$ and covariance identity, then for any set $B \in {^*}\mathcal{B}(S^{N-1}(\sqrt{N}))$, we have:
\begin{align}\label{pushforward characterization}
    \bar{\sigma}(B) = \mu_{(N)} \left(\left\{x \in {^*}\mathbb{R}^N : \frac{\sqrt{N}x}{\norm{x}} \in B\right\} \right).
\end{align}

This characterization of the uniform surface area measure yields the classical result of Poincar\'e (Theorem \ref{Poincare's theorem}) without doing any computations. We show that in the next subsection. 

\subsection{A nonstandard proof of Poincar\'e's theorem}\label{nonstandard Poincare}
Suppose $(\Omega, \mathcal{F}, \mbp)$ is a probability space, and $(X_n)_{n \in \mathbb{N}}$ is a sequence of iid $\mathcal{N}(0,1)$ random variables (that is, the $X_i$ are independent Gaussian random variables with mean $0$ and variance $1$). In that case, $({X_n}^2 - 1)_{n \in \mathbb{N}}$ is an iid sequence of random variables with mean zero and finite variance (in fact, the variance is equal to one). Hence the weak law of large numbers implies the following:
\begin{align}\label{WLLN0}
    \lim_{n \rightarrow \infty} \mbp \left( \abs{\frac{({X_1}^2 - 1) + \ldots + ({X_n}^2 - 1)}{n}} > \epsilon\right) = 0 \text{ for all } \epsilon \in \mathbb{R}_{>0}.
\end{align}

Each $X_i$ (where $i \in \mathbb{N}$), as a function from $\Omega$ to $\mathbb{R}$, has a nonstandard extension ${^*}X_i$, which, by transfer, is a ${^*}\mathcal{N}(0,1)$ random variable, that is, ${^*}\mathbb{P} \circ {{^*}X_i}^{-1}$ is the same as the internal measure ${^*}\mu_{(1)}$ (the nonstandard extension of the standard Gaussian measure $\mu_{(1)}$ on $\mathbb{R}$). 

Consider the function $X \co \mathbb{N} \times \Omega \to \mathbb{R}$ defined by:
\begin{align}\label{definition of X}
    X(n, \omega) \defeq X_n(\omega) \text{ for all } n \in \mathbb{N}, \text{ and } \omega \in \Omega. 
\end{align}

Considering the nonstandard extension of $X$, we see that $${^*}X(i, \omega) = {^*}X_i(\omega) \text{ for all } i \in \mathbb{N}, \text{ and } \omega \in \Omega.$$

Furthermore, this allows us to naturally talk about the $N^{\text{th}}$ element of the original sequence of random variables for any $N \in {^*}\mathbb{N}$ (and all those elements will be independent and internally Gaussian distributed with mean $0$ and variance $1$). In the sequel, we will often be loose with notation, and use $X_i$ as both a standard and a nonstandard random variable (when it is considered as a nonstandard random variable, it is understood to be given by the nonstandard extension of the map $X \co \mathbb{N} \times \Omega \to \mathbb{R}$), with the usage being clear from context. 

For the rest of this section, fix $N > \mathbb{N}$. Let $\bar{\sigma}$ be the internal uniform surface area measure on $S^{N-1}(\sqrt{N})$. Let $Y = (X_1)^2 + \ldots + (X_N)^2$. 

\begin{lemma}\label{N over theta lemma}
There exists an infinitesimal $\xi >0$ such that \begin{align}\label{N over theta equation}
    {^*}\mbp \left( \abs{\frac{Y}{N} - 1} > \xi\right) \approx 0.
\end{align}
\end{lemma}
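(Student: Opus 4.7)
The plan is to combine the hypothesis \eqref{WLLN0} (the weak law of large numbers for the sequence $(X_n^2 - 1)_{n \in \mathbb{N}}$) with a standard overflow argument on an appropriately chosen internal set of natural numbers. The WLLN gives us the desired concentration for every standard tolerance, and overflow upgrades this to an infinitesimal tolerance.

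First, I would unpack the convergence in \eqref{WLLN0} quantitatively. Writing $Y_m \defeq X_1^2 + \ldots + X_m^2$, for each $n \in \mathbb{N}$ the WLLN yields some $m(n) \in \mathbb{N}$ such that $\mbp(|Y_m/m - 1| > 1/n) < 1/n$ for all $m \geq m(n)$. Transferring this bound and using that $N > \mathbb{N}$ (so in particular $N \geq m(n)$ for every $n \in \mathbb{N}$), we obtain ${^*}\mbp(|Y/N - 1| > 1/n) < 1/n$ for every $n \in \mathbb{N}$.

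Next, I would introduce the internal set
\[
B \defeq \{n \in {^*}\mathbb{N} : {^*}\mbp(|Y/N - 1| > 1/n) < 1/n\}.
\]
This set is internal because $N$, $Y$, and ${^*}\mbp$ are all internal objects, so the function $n \mapsto {^*}\mbp(|Y/N - 1| > 1/n) - 1/n$ is an internal function on ${^*}\mathbb{N}$, and $B$ is the preimage of an internal set under this internal function. By the previous step, $\mathbb{N} \subseteq B$. Overflow (Proposition \ref{Over and under}\eqref{overflow}) then supplies some $n_0 > \mathbb{N}$ with $n_0 \in B$. Setting $\xi \defeq 1/n_0$ gives a positive infinitesimal satisfying ${^*}\mbp(|Y/N - 1| > \xi) < \xi \approx 0$, which is exactly \eqref{N over theta equation}.

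There is no substantive obstacle in this proof; the only care needed is to phrase the standard WLLN in a quantitative-enough form that transfer produces an internal statement applicable at the hyperfinite index $N$, and to verify that the set $B$ is genuinely internal so that Proposition \ref{Over and under} applies. Both points are routine.
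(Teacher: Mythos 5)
Your proof is correct and takes essentially the same approach as the paper: invoke the WLLN, check that the internal set of tolerances for which the desired probability bound holds contains all standard tolerances, and then use overspill/underspill to push this to an infinitesimal tolerance. The only cosmetic difference is that you index by $n \in {^*}\mathbb{N}$ and apply overflow, whereas the paper indexes by $\epsilon \in {^*}\mathbb{R}_{>0}$ and applies a real-valued underflow; both are the same underlying principle, and your formulation has the minor advantage of matching Proposition \ref{Over and under} exactly as stated.
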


\begin{proof}
Consider $\epsilon \in \mathbb{R}$ such that $0 < \epsilon < 1$. Then we have the following:
\begin{align*}
    {^*}\mbp \left( \abs{\frac{Y}{N} - 1} > \epsilon \right) &= {^*}\mbp \left( \abs{\frac{({X_1}^2 - 1) + \ldots + ({X_N}^2 - 1)}{N}} > \epsilon\right), 
\end{align*}

where the right side is infinitesimal by the nonstandard characterization of limits applied to \eqref{WLLN0}. The lemma now follows by underflow applied to the following internal set.
$$\left\{\epsilon \in {^*}\mathbb{R}_{>0}: {^*}\mbp \left( \abs{\frac{Y}{N} - 1} > \epsilon \right) < \epsilon \right\}.$$
\end{proof}

For a set $S \subseteq \mathbb{R}^k$ and a real number $\alpha \in \mathbb{R}$, the set $\alpha S$ is the set of all scalar products (of elements of $S$) by $\alpha$. That is,
$$\alpha S \defeq \{y \in \mathbb{R}^k: y = \alpha x \text{ for some } \alpha \in A\}.$$

For $S \subseteq \mathbb{R}^k$ and $A \subseteq \mathbb{R}$, the set $AS$ is defined as the set of all scalar products of elements of $S$ with elements in $A$. That is,

\begin{align}\label{scalar product}
    AS \defeq \cup_{\alpha \in A} \alpha S.
\end{align}

Scalar products (with elements of ${^*}\mathbb{R}$ or with internal subsets of ${^*}\mathbb{R}$) are analogously defined in the nonstandard universe by transfer. We note the following elementary fact about small scalings of compact sets that will be useful in the sequel.

\begin{lemma}\label{lemma for compact sets}
Let $C$ be a compact subset of $\mathbb{R}^k$. Then we have:
\begin{align}\label{compact intersection result}
    \bigcap_{n \in \mathbb{N}_{>1}} \left[1 - \frac{1}{n}, 1+ \frac{1}{n} \right]C  = C.
\end{align}
\end{lemma}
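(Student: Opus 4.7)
The plan is a direct, two-inclusion argument; compactness (really, just closedness) will only matter for one direction. The containment $C \subseteq \bigcap_{n>1}[1 - 1/n, 1+1/n]C$ is immediate because $1 \in [1-1/n, 1+1/n]$ for each $n \in \mathbb{N}_{>1}$, so $x = 1 \cdot x \in [1-1/n, 1+1/n]C$ for every $x \in C$ and every such $n$. I would dispose of this inclusion in one sentence.

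For the reverse inclusion, I would fix $y$ in the intersection and, for each $n \in \mathbb{N}_{>1}$, unpack the definition \eqref{scalar product} to obtain $\alpha_n \in [1-1/n, 1+1/n]$ and $x_n \in C$ with $y = \alpha_n x_n$. Since $\alpha_n \geq 1/2 > 0$ for $n \geq 2$, the sequence $x_n = y/\alpha_n$ is well-defined, and because $\alpha_n \to 1$ as $n \to \infty$, we have $x_n \to y$ in $\mathbb{R}^k$. As $C$ is compact, it is closed, so the limit $y$ of the sequence $(x_n)_{n \geq 2} \subseteq C$ lies in $C$. This gives $\bigcap_{n>1}[1-1/n, 1+1/n]C \subseteq C$ and completes the equality \eqref{compact intersection result}.

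There is no real obstacle here; the only small care needed is verifying that division by $\alpha_n$ is legitimate, which holds uniformly for $n \geq 2$. Alternatively, one could avoid dividing altogether by invoking sequential compactness directly: extract a subsequence $x_{n_k} \to x \in C$ and pass to the limit in $y = \alpha_{n_k} x_{n_k}$ to conclude $y = x \in C$. Either variant produces a short, self-contained proof using only elementary topology of $\mathbb{R}^k$, and no nonstandard machinery is required for this particular lemma.
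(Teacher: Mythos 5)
Your proof is correct, and your alternative route (extracting a convergent subsequence of $x_n \in C$ by sequential compactness and passing to the limit in $y = \alpha_{n_k} x_{n_k}$) is exactly the paper's argument. Your primary route is slightly different and in fact slightly more economical: rather than appealing to compactness, you observe that for $n \geq 2$ the scalar $\alpha_n$ is bounded away from $0$, so that $x_n = y/\alpha_n$ is uniquely determined by $y$ and $\alpha_n$, hence $x_n \to y$ directly (no subsequence needed), and then closedness of $C$ alone finishes the job. This reveals that the lemma really only needs $C$ closed, not compact. The paper's version is equally valid but invokes a heavier tool (sequential compactness) where it isn't strictly necessary; the trade-off is that the compactness argument generalizes more readily to settings where the scalars could approach zero or where $C$ lives in a space without a norm. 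Either way, no gap.
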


\begin{proof}
Let the left side of \eqref{compact intersection result} be called $\tilde{C}$ for brevity. It is clear that $C \subseteq \tilde{C}$. To show the inclusion from the other side, consider $x \in \tilde{C}$. Thus for each $n \in \mathbb{N}_{>1}$, there exist $\alpha_n \in \mathbb{R}$ and $y_n \in C$ such that $x = \alpha_n y_n$. By the sequential compactness of $C$, find a subsequence $(n_k)_{k \in \mathbb{N}}$ such that $\lim_{k \rightarrow \infty} y_{n_k}$ exists as an element of $C$. Say, $\lim_{k \rightarrow \infty} y_{n_k} = y \in C$. Note that, by construction, we have $\lim_{k \rightarrow \infty} \alpha_{n_k} = 1$. By continuity of the scalar product map, we thus have the following:
\begin{align}
    x = \lim_{k \rightarrow \infty} \alpha_{n_k} y_{n_k} = \left(\lim_{k \rightarrow \infty} \alpha_{n_k}\right)\left(\lim_{k \rightarrow \infty} y_{n_k}\right) = y \in C,
\end{align}
completing the proof. 
\end{proof}

We now prove Poincar\'e's theorem that we restate here for convenience:
\begin{reptheorem}{Poincare's theorem}
For all bounded measurable functions $f\co \mathbb{R}^k \to \mathbb{R}$, we have:
\begin{align*}
\lim_{n \rightarrow \infty} \int_{S^{n-1}(\sqrt{n})} f d\bar{\sigma} = \int_{\mathbb{R}^k} f d\mu.    
\end{align*}\end{reptheorem}

\begin{proof}
Let $B$ be a Borel subset of $\mathbb{R}^k$ and let $\mbx = (X_1, \ldots, X_N)$ be as defined in \eqref{definition of X}. For $k \in \mathbb{N}$, let $\mbx_{(k)}$ be the projection $(X_1, \ldots, X_k)$ onto ${^*}\mathbb{R}^k$. Using \eqref{pushforward characterization} and taking standard parts on both sides yields the following:
\begin{align}
    L\bar{\sigma} (\{(x_1 \ldots, x_N) \in S^{N-1}(\sqrt{N}): (x_1, \ldots, x_{k}) \in {^*}B\}) &= L{^*}\mathbb{P} \left( \frac{\sqrt{N}\mbx_{(k)}}{\sqrt{Y}} \in {^*}B\right) \nonumber \\
    &= L{^*}\mathbb{P} \left( \mbx_{(k)} \in \sqrt{\frac{Y}{N}}{^*}B\right).
\end{align}

Using Lemma \ref{N over theta lemma}, the last expression is less than or equal to 
$$L{^*}\mbp \left( \mbx_{(k)} \in \bigcap_{n = 2}^m {\mystrut^{*}}\left[1 - \frac{1}{n}, 1+ \frac{1}{n} \right] B\right) = L{^*}\mbp \left(\mbx_{(k)} \in {\mystrutt^*}\left(\bigcap_{n = 2}^m \left[1 - \frac{1}{n}, 1+ \frac{1}{n} \right]  B\right)\right)$$
for all $m \in \mathbb{N}$. 

Taking limits as $m \rightarrow \infty$, we obtain:
\begin{align}
 &L\bar{\sigma} (\{(x_1 \ldots, x_N) \in S^{N-1}(\sqrt{N}): (x_1, \ldots, x_{k}) \in {^*}B\}) \nonumber \\
 &\leq \lim_{m \rightarrow \infty} L{^*}\mbp \left( \mbx_{(k)} \in {\mystrutt^*}\left[\bigcap_{n = 2}^m \left[1 - \frac{1}{n}, 1+ \frac{1}{n} \right]  B\right)\right) \nonumber \\
 &= \lim_{m \rightarrow \infty} \mbp \left( \mbx_{(k)} \in \bigcap_{n = 2}^m \left[1 - \frac{1}{n}, 1+ \frac{1}{n} \right] B\right) \nonumber \\
 &= \mbp \left( \mbx_{(k)} \in \bigcap_{n \in \mathbb{N}_{>1}} \left[1 - \frac{1}{n}, 1+ \frac{1}{n} \right]  B\right). \label{Borel set inequality}
\end{align}

By \eqref{Borel set inequality} and Lemma \ref{lemma for compact sets}, we have the following inequality:
\begin{align*}
    L\bar{\sigma} (\{(x_1 \ldots, x_N) \in S^{N-1}(\sqrt{N}): (x_1, \ldots, x_{k}) \in {^*}C\}) \leq \mbp \left( \mbx_{(k)} \in C \right) = \mu_{(k)}(B)\\
    \text{for all compact subsets } C \subseteq \mathbb{R}^k.
\end{align*}

Since $N > \mathbb{N}$ is arbitrary and $\mu_{(k)}$ is a Radon measure, Proposition \ref{TFAE} and the nonstandard characterization of limits complete the proof. 
\end{proof}

\begin{section}{On the limiting behavior of a sequence of probability spaces}\label{section 3}
Toward the proof of Poincar\'e's theorem in the previous section, we showed that for an arbitrary $N > \mathbb{N}$, the surface area measure over $S^{N-1}(\sqrt{N})$ (which may be thought of as the $N^{\text{th}}$ element of the sequence of of spheres $(S^{n-1}(\sqrt{n}))_{n \in \mathbb{N}}$) assigns the same measure (up to infinitesimals) to fibers of finite dimensional sets as the Gaussian measures of such sets (in their respective ambient Euclidean spaces). This idea is explored in more abstract settings in the current section in order to generalize to limiting results for integrals of unbounded functions. 

\subsection{Integrating finite dimensional functions along nice sequences of probability spaces}
Let $\{(\Omega_n, \mathcal{F}_n, \nu_n)\}_{n \in \mathbb{N}}$ be a sequence of probability spaces. Viewing the sequence as a function on $\mathbb{N}$, we get an internal probability space $(\Omega_N, \mathcal{F}_N, \nu_N)$ for each $N > \mathbb{N}$. Note that we have been dropping the ${^*}$ when it is clear from context that the index $N$ is hyperfinite. Philosophically, the Loeb space $(\Omega_N, L(\mathcal{F}_N), L\nu_N)$ for $N > \mathbb{N}$ should capture the long-term behavior of the sequence $\{(\Omega_n, \mathcal{F}_n, \nu_n)\}_{n \in \mathbb{N}}$ of probability spaces. We will often omit the sigma-algebra when there is no chance of confusion. Drawing inspiration from Theorem \ref{S-integrable TFAE}\ref{S4}, we obtain the following theorem in this regard.

\begin{theorem}\label{sufficient condition for integrability}
Let $(E, \mathcal{E})$ be a measure space. Let $k \in \mathbb{N}$ and for each $n \in \mathbb{N}_{> k}$, suppose $\Omega_n \subseteq E^{n'}$ for some $n' \in \mathbb{N}_{> k}$. Suppose that $\mathcal{F}_n$, the given sigma-algebra on $\Omega_n$, is induced by the product sigma-algebra $\mathcal{E}_{n'}$ on $E^{n'}$. Let $(\Omega_n, \mathcal{F}_n, \nu_n)$ be a sequence of Borel probability spaces. Let $f\co E^k \to \mathbb{R}$ satisfy 
\begin{align}\label{double}
   \lim_{m \rightarrow \infty} \lim_{n \rightarrow \infty} \int_{\Omega_n \cap \{\abs{f} \geq m\}} \abs{f} d{\nu_n} = 0. 
\end{align}
Then, $f$ is integrable over $\Omega_n$ for large $n $, so that the sequence $\alpha_{f, n} \defeq \int_{\Omega_n} f d{\nu_n}$ is well-defined for large $n$. Furthermore, for any $N > \mathbb{N}$, the function $\st({^*}f)$ is Loeb integrable over $(\Omega_N, L(\mathcal{F}_N), L\nu_N)$ and satisfies
\begin{align*} 
          \st(\alpha_{f, N}) = \int_{\Omega_{N}} \st({^*}f) d{L\nu_N}. 
\end{align*}
\end{theorem}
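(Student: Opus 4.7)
The strategy is to verify that ${^*}f$ is $S$--integrable on the internal probability space $(\Omega_N, \mathcal{F}_N, \nu_N)$ for every $N > \mathbb{N}$; once this is known, Theorem \ref{S-integrable TFAE} will immediately supply the Loeb integrability of $\st({^*}f)$ together with the required equality of integrals. I would verify $S$--integrability via condition \ref{S2} of that theorem.

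I would first dispense with the prerequisites. Using the hypothesis \eqref{double}, fix $m_0 \in \mathbb{N}$ large enough that $\lim_{n \to \infty} \int_{\Omega_n \cap \{\abs{f} \geq m_0\}} \abs{f} d\nu_n < 1$; then, for all sufficiently large $n \in \mathbb{N}$, the bound $\int_{\Omega_n} \abs{f} d\nu_n \leq m_0 + 2$ holds. This establishes the integrability of $f$ on $\Omega_n$ for large $n$; it also transfers to give $\starint_{\Omega_N} \abs{{^*}f} d\nu_N \in {^*}\mathbb{R}_{\text{fin}}$, so ${^*}f \in {^*}L^1(\Omega_N)$. A Markov estimate $\nu_N(\abs{{^*}f} \geq m) \leq \tfrac{1}{m} \starint_{\Omega_N} \abs{{^*}f} d\nu_N$ applied for each standard $m$, followed by $m \to \infty$, yields $L\nu_N(\{{^*}f \in {^*}\mathbb{R}_{\text{fin}}\}) = 1$.

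The main step is the verification of condition \ref{S2}. Consider the double-indexed quantity $g(m,n) \defeq \int_{\Omega_n \cap \{\abs{f} \geq m\}} \abs{f} d\nu_n$, which is finite for all $m \in \mathbb{N}$ and all sufficiently large $n$. For each standard $m$ beyond some threshold, the hypothesis gives $\lim_{n \to \infty} g(m, n) = \alpha_m$ for some $\alpha_m \in \mathbb{R}_{\geq 0}$, with $\alpha_m \to 0$ as $m \to \infty$; by Proposition \ref{limits of sequences}, $\st(g(m, N)) = \alpha_m$ for every $N > \mathbb{N}$. Crucially, by transfer, $m \mapsto g(m, N)$ is nonincreasing on ${^*}\mathbb{N}$, since the internal sets $\{\abs{{^*}f} \geq m\}$ shrink as $m$ grows. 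Hence for any $M > \mathbb{N}$ and any standard $m$ past the threshold,
\begin{equation*}
\starint_{\Omega_N} \abs{{^*}f} \mathbbm{1}_{\{\abs{{^*}f} > M\}} d\nu_N \leq g(M, N) \leq g(m, N),
\end{equation*}
so $\st\bigl(\starint_{\Omega_N} \abs{{^*}f}\mathbbm{1}_{\{\abs{{^*}f} > M\}} d\nu_N\bigr) \leq \alpha_m$. Letting $m \to \infty$ forces this standard part to be $0$, verifying \ref{S2}.

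Thus $\abs{{^*}f}$ is $S$--integrable. Applying condition \ref{S4} of Theorem \ref{S-integrable TFAE} to the positive and negative parts of ${^*}f$ separately (each dominated by $\abs{{^*}f}$) and subtracting gives both the Loeb integrability of $\st({^*}f)$ and the equality $\st(\alpha_{f,N}) = \int_{\Omega_N} \st({^*}f) dL\nu_N$. The principal subtlety lies in bridging from standard $m$ to hyperfinite $M > \mathbb{N}$ inside condition \ref{S2}; monotonicity of $g(\cdot, N)$ bypasses any need for an overflow argument applied to the double-indexed sequence.
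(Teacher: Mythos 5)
Your proposal is correct and takes essentially the same route as the paper: it establishes $S$--integrability of ${^*}f$ on $\Omega_N$ by verifying condition \ref{S2} of Theorem \ref{S-integrable TFAE} via the double-limit hypothesis, and then applies condition \ref{S4} to the positive and negative parts of ${^*}f$ and subtracts. The only cosmetic difference is that the paper runs the verification through an $\epsilon$--formulation (picking $\ell_\epsilon$ and $n_{\epsilon,m}$) while you work directly with the inner limit values $\alpha_m$ and Proposition \ref{limits of sequences}; both ultimately hinge on the monotonicity of $m \mapsto g(m,N)$.
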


\begin{remark}
Bounded measurable functions trivially satisfy the hypothesis in (\ref{double}).
\end{remark}

\begin{proof}
For a fixed $\epsilon \in \mathbb{R}_{>0}$, there exists $\ell_{\epsilon} \in \mathbb{N}$ such that the following holds:  for any $m \geq \ell_\epsilon$, there is an $n_{\epsilon, m} \in \mathbb{N}$ such that for all $n \geq n_{\epsilon, m}$, we have \begin{align}\label{tightness type equation}
    \int_{\Omega_n \cap \{\abs{f} \geq m\}} \abs{f} d{\nu_n} < \epsilon.
\end{align}

In particular, $f$ is integrable on $\Omega_n$ for all $n > n_{\epsilon, \ell_\epsilon}$, with the integral of the absolute value being at most $(\ell_\epsilon + \epsilon)$. Further, for any $M, N > \mathbb{N}$, transfer yields
$$\starint_{\Omega_N} \abs{{^*}f}\mathbbm{1}_{\{\abs{{^*}f} > M\}} d \nu_N  \leq \starint_{\Omega_N} \abs{{^*}f}\mathbbm{1}_{\{\abs{{^*}f} > {\ell_\epsilon}\}} d \nu_N < \epsilon \text{ for all } \epsilon \in \mathbb{R}_{>0}.$$

Given $N > \mathbb{N}$, ${^*}f$ is $S$--integrable on $\Omega_N$ by Theorem \ref{S-integrable TFAE}\ref{S2}. 

Now, $\alpha_{f, N}$ is the ${^*}$--integral of ${^*}f$ over $(\Omega_N, \nu_N)$ by transfer. Note that $$f = f_+ - f_-,$$
where $f_+ \defeq \max\{f, 0\}$ and $f_- \defeq \max\{-f, 0\}$. By transfer, we then have:
\begin{align}\label{alpha for f+ and f-}
\alpha_{f, N} = \alpha_{f_+, N} - \alpha_{f_-, N}.
\end{align}

Since ${^*}f$ is $S$--integrable on $(\Omega_N, \nu_N)$, so are ${^*}f_+$ and ${^*}f_-$ (this is because $\abs{{^*}f_+} \text{ and } \abs{{^*}f_-}$ are at most equal to $\abs{{^*}f}$). Since ${^*}f_+$ and ${^*}f_-$ are nonnegative functions, Theorem \ref{S-integrable TFAE}\ref{S4} implies:
\begin{align}\nonumber
    \alpha_{f_+, N} &= \int_{\Omega_N} \st({^*}f_+) dL\nu_N, \\
    \text{and }  \alpha_{f_-, N} &= \int_{\Omega_N} \st({^*}f_-) dL\nu_N. \label{positive and negative parts}
\end{align}

Using this in \eqref{alpha for f+ and f-} and then using the fact that $\st({^*}f)$ is Loeb integrable completes the proof.
\end{proof}

\begin{corollary}\label{consequences}
Let $(E, \mathcal{E})$ be a measure space. Let $k \in \mathbb{N}$ and for each $n \in \mathbb{N}_{> k}$, suppose $\Omega_n \subseteq E^{n'}$ for some $n' \in \mathbb{N}_{> k}$. Suppose that $\mathcal{F}_n$, the given sigma-algebra on $\Omega_n$, is induced by the product sigma-algebra $\mathcal{E}_{n'}$ on $E^{n'}$. Let $(\Omega_n, \mathcal{F}_n, \nu_n)$ be a sequence of Borel probability spaces. Let $\mathbb{P}$ be a probability measure on $(E^k, \mathcal{E}_k)$ such that
$L\nu_N ({^*}B) = \mathbb{P}(B) \text{ for any } B \in \mathcal{E}_k \text{ and } N > \mathbb{N}$.
\begin{enumerate}[(i)]
    \item\label{measurable} If $f\co E^k \to \mathbb{R}$ is measurable, then 
    $$L\nu_N(\{x \in \Omega_N: {^*}f(x) \in {^*}\mathbb{R}_{\text{fin}}\}) = 1 \text{ for all } N > \mathbb{N}.$$
    \item\label{bounded measurable} If $f\co E^k \to \mathbb{R}$ is bounded and measurable, then $$\lim_{n \rightarrow \infty} \int_{\Omega_n} f d\nu_n = \int_{E^k} f d\mathbb{P} = \int_{\Omega_N} \st({^*}f) dL\nu_N \text{ for all } N > \mathbb{N}.$$ 
    \item\label{integrable} If $f\co E^k \to \mathbb{R}$ is $\mathbb{P}$--integrable, then we have that $\st({^*}f)$ is $L\nu_N$--integrable for all $N > \mathbb{N}$. Furthermore, for any $N > \mathbb{N}$, we have:
    $$\int_{E^k}f d\mathbb{P} = \int_{\Omega_N} \st({^*}f) dL\nu_N \text{, and } \int_{E^k}\abs{f} d\mathbb{P} = \int_{\Omega_N} \abs{\st({^*}f)} dL\nu_N$$
\end{enumerate}
\end{corollary}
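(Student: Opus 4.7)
The approach is to fix an arbitrary $N > \mathbb{N}$ and reduce each of the three claims to results already established in Proposition \ref{Borel equivalence}, Corollary \ref{functions are finite almost everywhere}, and Corollary \ref{from bounded to integrable}. By transfer of the hypothesis that each $\Omega_n \subseteq E^{n'}$ for some $n' \in \mathbb{N}_{>k}$, there exists $N' \in {^*}\mathbb{N}_{>k}$ with $\Omega_N \subseteq {^*}E^{N'}$, and the internal sigma-algebra $\mathcal{F}_N$ is the restriction of ${^*}\mathcal{E}_{N'}$ to $\Omega_N$. Thus the setting of Proposition \ref{Borel equivalence} applies to $\Omega_N$ and $\nu_N \in {^*}\Prob(\Omega_N, \mathcal{F}_N)$ for each $N > \mathbb{N}$, and the hypothesis $L\nu_N({^*}B) = \mathbb{P}(B)$ for all $B \in \mathcal{E}_k$ is precisely condition \eqref{prop 1'}. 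Hence condition \eqref{Prop 1} also holds at each $N > \mathbb{N}$.

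Part \eqref{measurable} then follows by applying Corollary \ref{functions are finite almost everywhere} at the fixed $N$, whose sole hypothesis is condition \eqref{Prop 1} (equivalently \eqref{prop 1'}). Part \eqref{integrable} follows analogously from Corollary \ref{from bounded to integrable}, which takes exactly condition \eqref{Prop 1} and the $\mathbb{P}$-integrability of $f$ as input and yields both the $L\nu_N$-integrability of $\st({^*}f)$ and the two desired integral identities.

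For part \eqref{bounded measurable}, Proposition \ref{Borel equivalence} yields $\int_{\Omega_N} \st({^*}f) dL\nu_N = \int_{E^k} f d\mathbb{P}$ for every $N > \mathbb{N}$. To connect this with the standard sequence limit, I would observe that since $f$ is bounded, by transfer ${^*}f$ is bounded by the same constant, hence trivially $S$-integrable on $(\Omega_N, \nu_N)$. Theorem \ref{S-integrable TFAE}\ref{S4} then gives
\[
\st\!\left(\starint_{\Omega_N} {^*}f \, d\nu_N\right) \;=\; \int_{\Omega_N} \st({^*}f)\, dL\nu_N \;=\; \int_{E^k} f\, d\mathbb{P}.
\]
Since the ${^*}$-integral on the left is the $N^{\text{th}}$ term of the extended sequence $\bigl(\int_{\Omega_n} f\, d\nu_n\bigr)_{n \in \mathbb{N}}$ (by transfer) and this equality holds for every $N > \mathbb{N}$, the nonstandard characterization of limits (Proposition \ref{limits of sequences}) yields $\lim_{n \to \infty} \int_{\Omega_n} f\, d\nu_n = \int_{E^k} f\, d\mathbb{P}$.

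There is no substantive obstacle here: the corollary is essentially a bookkeeping exercise that packages the abstract single-space results already proved so that they apply uniformly across every hyperfinite index of the sequence. The most delicate moment is in part \eqref{bounded measurable}, where one must shuttle between the ${^*}$-integral and the Loeb integral via $S$-integrability in order to recover the standard sequence limit from the fixed-$N$ identity; for bounded $f$ this shuttle is automatic.
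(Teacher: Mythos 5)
Your proposal is correct and follows essentially the same reduction as the paper: part (i) via Corollary \ref{functions are finite almost everywhere}, part (iii) via Corollary \ref{from bounded to integrable}, and part (ii) via the $S$-integrability of bounded ${^*}f$ together with Theorem \ref{S-integrable TFAE}\ref{S4} and the nonstandard characterization of limits. The only cosmetic difference is that for part (ii) you unpack the bounded case of $S$-integrability directly rather than citing Theorem \ref{sufficient condition for integrability} (whose hypothesis \eqref{double} holds trivially for bounded $f$), but the underlying argument is identical.
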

\begin{proof}
\ref{measurable} follows from Corollary \ref{functions are finite almost everywhere}. \ref{bounded measurable} follows from Theorem \ref{sufficient condition for integrability}, Corollary \ref{from bounded to integrable} and the nonstandard characterization of limits. Finally, \ref{integrable} follows from Corollary \ref{from bounded to integrable}, completing the proof.
\end{proof}

Note that Corollary \ref{consequences}\ref{integrable} allows us to express the expected value of a $\mathbb{P}$--integrable function $f\co E^k \to \mathbb{R}$ as the Loeb integral of $\st({^*}f)$ over $\Omega_{N}$ for all hyperfinite $N$. However, this does not necessarily imply that the sequence $\alpha_{f,n} \defeq \int_{\Omega_n} f d\nu_n$ converges to $\int_{E^k} f d\mathbb{P}$, as $\alpha_{f, N}$ may not be infinitesimally close to the Loeb integral of $\st({^*}f)$ over $\Omega_N$ in general. To see a counterexample, consider $(E, \mathcal{E}) = (\mathbb{N}_0, \mathcal{P}(\mathbb{N)}))$ (where $\mathbb{N}_0 = \mathbb{N} \cup \{0\}$), with $\Omega_n \defeq \{0, n\}$ for each $n \in \mathbb{N}$. Define $\mathbb{P} \defeq \mathbb{1}_{\{0\}}$, the probability measure concentrated at $0$. Define $\nu_n(\{0\}) = 1 - \frac{1}{n}$ and $\nu_n(\{n\}) = \frac{1}{n}$. Then for any $N > \mathbb{N}$, the Loeb measure $L\nu_N$ assigns full mass to $\{0\}$. Thus the hypotheses of Corollary \ref{consequences} are satisfied. Consider the measurable function $f \co \mathbb{N}_0 \to \mathbb{R}$ defined by $f(n) \defeq n$ for all $n \in \mathbb{N}$. It is clear that $\alpha_{f, N}$ equals $1$ while the Loeb integral of $\st({^*}f)$ equals $0$. 

In view of Theorem \ref{S-integrable TFAE}, the correct criterion needed for $\alpha_{f, N}$ to be infinitesimally close to the Loeb integral of $\st({^*}f)$ over $\Omega_N$ for nonnegative functions $f$ is the $S$--integrability of ${^*}f$ over $\Omega_N$. This also means that the sufficient criterion \eqref{double} in Theorem \ref{sufficient condition for integrability} is necessary if we restrict to nonnegative functions. We record and prove these observations in the following theorem.

\begin{theorem}\label{main equivalence}
In the setting of Corollary \ref{consequences}, the following are equivalent for a nonnegative function $f\co E^k \to \mathbb{R}_{\geq 0}$:
\begin{enumerate}
    \item\label{good limit} $f$ is $\mathbb{P}$--integrable and $\lim_{n \rightarrow \infty} \int_{\Omega_n} f d\nu_n = \int_{E^k} f d\mathbb{P}$.
    \item\label{S-integrable} The nonstandard extension ${^*}f$ is $S$--integrable on $\Omega_N$ for all $N > \mathbb{N}$.
    \item\label{good double limit} The function $f$ is integrable on $(\Omega_n, \nu_n)$ for all large $n \in \mathbb{N}$, and furthermore:
    $$ \lim_{m \rightarrow \infty} \lim_{n \rightarrow \infty} \int_{\Omega_n \cap \{{f} \geq m\}} {f} d{\nu}_n = 0.$$
\end{enumerate}
\end{theorem}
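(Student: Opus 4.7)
The plan is to prove the cyclic implications $(3) \Rightarrow (2) \Rightarrow (1) \Rightarrow (3)$. The implication $(3) \Rightarrow (2)$ is essentially already handled inside the proof of Theorem \ref{sufficient condition for integrability}: the double-limit assumption together with underflow/overflow yields $\starint_{\Omega_N} \abs{{^*}f}\mathbbm{1}_{\{\abs{{^*}f} > M\}} d\nu_N \approx 0$ for every $M > \mathbb{N}$, which is exactly condition \ref{S2} of Theorem \ref{S-integrable TFAE}, so ${^*}f$ is $S$-integrable on $\Omega_N$ for every $N > \mathbb{N}$.

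For $(2) \Rightarrow (1)$, I would fix $N > \mathbb{N}$ and use Theorem \ref{S-integrable TFAE}\ref{S4} to conclude that $\st({^*}f)$ is Loeb integrable on $\Omega_N$ with $\st(\starint_{\Omega_N} {^*}f \, d\nu_N) = \int_{\Omega_N} \st({^*}f) \, dL\nu_N$. To identify the latter with $\int_{E^k} f \, d\mathbb{P}$, I would apply Corollary \ref{consequences}\ref{bounded measurable} to the bounded truncations $f \wedge m$ to obtain
\begin{align*}
\int_{\Omega_N} \bigl(\st({^*}f) \wedge m\bigr) dL\nu_N = \int_{\Omega_N} \st\bigl({^*}(f \wedge m)\bigr) dL\nu_N = \int_{E^k} (f \wedge m) d\mathbb{P},
\end{align*}
and then let $m \to \infty$ on both sides via the monotone convergence theorem, which also shows $f \in L^1(E^k, \mathbb{P})$. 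To pass from the infinitesimal closeness at hyperfinite indices to the standard limit statement, I need the sequence $\alpha_{f,n} = \int_{\Omega_n} f \, d\nu_n$ to be defined for all sufficiently large standard $n$; this follows by applying underflow to the internal set $\{n \in {^*}\mathbb{N} : {^*}f \in {^*}L^1(\Omega_n, \nu_n)\}$, which contains every $N > \mathbb{N}$. Proposition \ref{limits of sequences} then gives $\lim_n \alpha_{f,n} = \int_{E^k} f \, d\mathbb{P}$.

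For $(1) \Rightarrow (3)$, I would write, for each fixed $m \in \mathbb{N}$,
\begin{align*}
\int_{\Omega_n \cap \{f \geq m\}} f \, d\nu_n = \int_{\Omega_n} f \, d\nu_n - \int_{\Omega_n} f \mathbbm{1}_{\{f < m\}} d\nu_n.
\end{align*}
Since $f \mathbbm{1}_{\{f < m\}}$ is bounded measurable, Corollary \ref{consequences}\ref{bounded measurable} gives $\lim_{n \to \infty} \int_{\Omega_n} f \mathbbm{1}_{\{f < m\}} d\nu_n = \int_{E^k} f \mathbbm{1}_{\{f < m\}} d\mathbb{P}$, and combining this with the assumption $\lim_n \int_{\Omega_n} f \, d\nu_n = \int_{E^k} f \, d\mathbb{P}$ yields $\lim_{n \to \infty} \int_{\Omega_n \cap \{f \geq m\}} f \, d\nu_n = \int_{E^k} f \mathbbm{1}_{\{f \geq m\}} d\mathbb{P}$. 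Letting $m \to \infty$ and using dominated convergence on $(E^k, \mathbb{P})$ (justified by $\mathbb{P}$-integrability of $f$) sends the right side to zero, which is condition (3).

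The only step with any subtlety is the transfer back from hyperfinite $N$ to large standard $n$ in $(2) \Rightarrow (1)$, which relies on the fact that ${^*}$-integrability (unlike $S$-integrability) is an internal property, so the underflow argument applies cleanly. Everything else is a routine combination of truncation, monotone/dominated convergence, and the tools already packaged in Corollary \ref{consequences} and Theorem \ref{S-integrable TFAE}.
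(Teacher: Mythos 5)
Your proof is correct, and it traverses the equivalence cycle in the \emph{opposite} direction from the paper. The paper proves $(1) \Rightarrow (2) \Rightarrow (3) \Rightarrow (1)$, staying entirely inside the nonstandard framework: $(1) \Rightarrow (2)$ reads off $S$-integrability from condition \ref{S4} of Theorem \ref{S-integrable TFAE}, and $(2) \Rightarrow (3)$ picks, for each $m$, a hyperfinite $N_0$ realizing the $\limsup$ and evaluates it via $S$-integrability before sending $m \to \infty$. You instead prove $(3) \Rightarrow (2) \Rightarrow (1) \Rightarrow (3)$, and the distinctive leg is $(1) \Rightarrow (3)$, which you handle by a purely standard truncation argument: split $\int_{\Omega_n} f\,d\nu_n$ as $\int_{\Omega_n} f\mathbbm{1}_{\{f<m\}}\,d\nu_n + \int_{\Omega_n \cap \{f \geq m\}} f\,d\nu_n$, apply Corollary \ref{consequences}\ref{bounded measurable} to the bounded piece, subtract, and finish with dominated convergence on $(E^k, \mathbb{P})$. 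This neatly factors the argument into a standard part and a nonstandard part (the $(3) \Rightarrow (2) \Rightarrow (1)$ legs, which you correctly piece together from Theorem \ref{sufficient condition for integrability}, Theorem \ref{S-integrable TFAE}\ref{S4}, Corollary \ref{consequences}\ref{bounded measurable} applied to $f \wedge m$, and underflow to push $*$-integrability from hyperfinite $N$ down to large standard $n$). The paper's route keeps everything nonstandard, which is thematically consistent with the rest of the section, whereas yours makes $(1) \Rightarrow (3)$ transparent to a reader with no nonstandard background and sidesteps the (slightly delicate) $m$-dependent choice of $N_0$ in the paper's $(2) \Rightarrow (3)$ step. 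Both are valid; yours is arguably the more elementary assembly.
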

\begin{proof} $\boldbox{\mathit{(1) \Rightarrow (2)}}$

Assume that $f$ is $\mathbb{P}$--integrable and $\lim_{n \rightarrow \infty} \int_{\Omega_n} f d\nu_n = \int_{E^k} f d\mathbb{P}$. Using the nonstandard characterization of limits, Corollary \ref{consequences}[\ref{integrable}], and Theorem \ref{S-integrable TFAE}\ref{S4} (making use of the fact that $f = \abs{f}$ since $f$ is assumed to be nonnegative), it follows that ${^*}f$ is $S$--integrable on $\Omega_N$ for any $N > \mathbb{N}$.

 $\boldbox{\mathit{(2) \Rightarrow (3)}}$ 
 
 Now assume that ${^*}f$ is $S$--integrable on $\Omega_N$ for all $N > \mathbb{N}$. As a consequence (using either Theorem \ref{S-integrable TFAE}\ref{S2} or Theorem \ref{S-integrable TFAE}\ref{S3}), we have that ${^*}f\mathbbm{1}_{\{\abs{{^*}f} \geq m\}}$ is $S$--integrable on $\Omega_N$ for any $N > \mathbb{N}$ and $m \in \mathbb{N}$. Fix $N_0 > \mathbb{N}$ such that the following is true (existence of such an $N_0$ is guaranteed by the nonstandard characterization of limit superiors):
\begin{align*}
   \limsup_{n \rightarrow \infty} \int_{\Omega_n \cap \{\abs{f} \geq m\}} \abs{f} d{\nu}_n &= \st \left(\starint_{\Omega_{N_0}} {^*}\abs{f}\mathbbm{1}_{\{{^*}\abs{f} \geq m\}} d{\nu}_{N_0} \right).
\end{align*}

By Theorem \ref{S-integrable TFAE}\ref{S4}, we get:
\begin{align}
    \limsup_{n \rightarrow \infty} \int_{\Omega_n \cap \{\abs{f} \geq m\}} \abs{f} d{\nu}_n &= \int_{\Omega_{N_0}} \st \left({^*}\abs{f} \mathbbm{1}_{\{ {^*}\abs{f} \geq m \}} \right) dL{\nu}_{N_0} \nonumber \\
    \Rightarrow \lim_{m \rightarrow \infty} \limsup_{n \rightarrow \infty} \int_{\Omega_n \cap \{\abs{f} \geq m\}} \abs{f} d{\nu}_n &= \lim_{m \rightarrow \infty} \int_{\Omega_{N_0}} \st \left({^*}\abs{f} \mathbbm{1}_{\{ {^*}\abs{f} \geq m \}} \right) dL{\nu}_{N_0}. \label{N_0}
\end{align}

Since ${^*}f$ is $S$--integrable on $\Omega_{N_0}$, it follows that $\st({^*}f)$ is Loeb integrable on $\Omega_{N_0}$. Hence the limit on the right side of (\ref{N_0}) is zero, as desired.

$\boldbox{\mathit{(3) \Rightarrow (1)}}$ 

This follows from Theorem \ref{sufficient condition for integrability}, Corollary \ref{consequences}\ref{integrable}, and Theorem \ref{S-integrable TFAE}\ref{S4}.
\end{proof}

\subsection{Application to a proof of the Riemann-Lebesgue Lemma}
The theory of limiting integrals built over the last two subsections may theoretically be applied to a lot of situations in which the probability spaces are changing. While we will cover its application to spherical integrals in the next section, we include here a new proof of the famous Riemann--Lebesgue lemma as an illustration of the versatility of this theory. We paraphrase the Riemann--Lebesgue lemma below (see, for example, Rudin \cite[5.14, p. 103]{Rudin-RnC}).
\begin{theorem}[Riemann--Lebesgue Lemma]\label{Riemann-Lebesgue}
Let $\lambda$ be the Lebesgue measure on the interval $T \defeq [-\pi, \pi]$. If $f \in L^1(T, \lambda)$, then we have:
$$\lim_{n \rightarrow \infty} \int_T f(x) \cos(nx) d\lambda(x) = 0 \text{ and } \lim_{n \rightarrow \infty} \int_T f(x) \sin(nx) d\lambda(x) = 0.$$
\end{theorem}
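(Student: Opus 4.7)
The plan is to apply the limit-switching machinery of Section~\ref{section 3} by building a pair of naturally associated probability measures on $T$ whose difference of integrals against $f$ reconstructs $\int_T f(x)\cos(nx)\,d\lambda$, and whose weak limit is the normalized Lebesgue measure $\bar\lambda \defeq \lambda/(2\pi)$. Concretely, since $\int_T \cos_+(nx)\,d\lambda = \int_T \cos_-(nx)\,d\lambda = 2$ for every $n\in\mathbb{N}$, I define probability measures $\nu_n^\pm$ on $T$ by $d\nu_n^\pm/d\lambda = \cos_{\pm}(nx)/2$, so that
$$\int_T f(x)\cos(nx)\,d\lambda = 2\int_T f\,d\nu_n^+ - 2\int_T f\,d\nu_n^-.$$
It thus suffices to show $\lim_n \int_T f\,d\nu_n^\pm = \int_T f\,d\bar\lambda$, and the sine case is identical with $\sin_{\pm}$ in place of $\cos_{\pm}$.

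The next step is to verify the hypothesis $\lim_n \nu_n^\pm(B) = \bar\lambda(B)$ for every Borel $B\subseteq T$ required by Corollary~\ref{consequences}. For an open interval $(a,b)$ this is the direct computation
$$\nu_n^+((a,b)) = \frac{1}{2n}\int_{na}^{nb}\cos_+(y)\,dy \longrightarrow \frac{b-a}{2\pi},$$
using that $\cos_+$ has mean $1/\pi$ per period of length $2\pi$. To pass from intervals to all Borel sets I would invoke Dynkin's $\pi$--$\lambda$ theorem: intervals form a $\pi$-system generating $\mathcal{B}(T)$, and the class of $B$ for which the limit holds obviously contains $T$ and is closed under proper differences, while closure under disjoint countable unions follows from the uniform bound $d\nu_n^\pm/d\lambda \leq 1/2$, which gives $\nu_n^\pm(B_k) \leq \pi\bar\lambda(B_k)$ and thereby the summable domination that licenses the swap of $\sum_k$ with $\lim_n$.

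Finally, the same pointwise bound $d\nu_n^\pm/d\lambda \leq 1/2$ gives, uniformly in $n$,
$$\int_{\{|f|\geq m\}}|f|\,d\nu_n^\pm \leq \tfrac{1}{2}\int_{\{|f|\geq m\}}|f|\,d\lambda,$$
so since $f\in L^1(T,\lambda)$ the tail double-limit hypothesis~\eqref{double} of Theorem~\ref{sufficient condition for integrability} is met. Applying that theorem in conjunction with Corollary~\ref{consequences}\ref{integrable} yields $\lim_n \int_T f\,d\nu_n^\pm = \int_T f\,d\bar\lambda$, and subtracting the two limits gives the desired $\lim_n \int_T f(x)\cos(nx)\,d\lambda = 0$.

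The main obstacle is the Borel-set convergence step, where one must upgrade the elementary interval computation to all Borel sets \emph{without circularity}, i.e.\ without invoking the very Riemann--Lebesgue statement we are trying to prove. The uniform density bound $1/2$ is the critical common ingredient: it powers both the $\pi$--$\lambda$ extension to all Borel sets and the tail estimate that verifies the $S$-integrability criterion of Theorem~\ref{main equivalence}, so any version of the argument should make this bound explicit from the outset.
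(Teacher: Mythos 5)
Your proposal is correct, and it takes a genuinely different route from the paper. The paper uses a \emph{single} sequence of probability densities $g_n(x) = \frac{1-\cos(nx)}{2\pi}$, establishes weak convergence $\mathbb{P}_n \Rightarrow \mathbb{P}$ via pointwise convergence of distribution functions, then invokes the Portmanteau-type inequality for open sets together with Proposition \ref{TFAE} (exploiting that $\mathbb{P}$ is Radon) to upgrade the open-set information to equality of $L\mathbb{P}_N({^*}B)$ and $\mathbb{P}(B)$ on all Borel $B$. You instead split $\cos(nx)$ into its positive and negative parts and work with \emph{two} sequences of densities $\frac{\cos_{\pm}(nx)}{2}$; you then establish the Borel-set equality directly by computing on intervals and running a $\pi$--$\lambda$ argument whose $\lambda$-system closure properties are licensed by the uniform density bound $\tfrac{1}{2}$. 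This reorganization is a real advantage: it never appeals to weak-convergence theory at all, and the domination that powers the countable-additivity step in the $\pi$--$\lambda$ extension is literally the \emph{same} bound that verifies the tail condition \eqref{double}, which makes the proof more self-contained. The final step --- feeding the setwise convergence and the tail estimate into Theorem \ref{sufficient condition for integrability} / Corollary \ref{consequences}\ref{integrable} to conclude $\lim_n \int_T f\,d\nu_n^{\pm} = \int_T f\,d\bar\lambda$ and subtracting --- is sound and matches the paper's use of the same machinery. One small point worth making explicit when you write it up: the hypothesis of Corollary \ref{consequences} is phrased as $L\nu_N({^*}B) = \mathbb{P}(B)$ for all $N > \mathbb{N}$, which is equivalent to your formulation $\lim_n \nu_n^{\pm}(B) = \bar\lambda(B)$ by the nonstandard characterization of limits (Proposition \ref{limits of sequences}), and you should say so rather than silently identify the two.
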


\begin{proof}
For each $n \in \mathbb{N}$, define $g_n \co T \to \mathbb{R}$ by $g_n(x) = \frac{1 - \cos(nx)}{2\pi}$. The functions $g_n$ are probability densities on $[-\pi, \pi]$. For each $n \in \mathbb{N}$, let $\mathbb{P}_n$ denote the probability measure on $T$ with the density $g_n$. By integrating the densities for $n \in \mathbb{N}$, we find that the corresponding probability distribution functions are given by:
$$G_n(x) \defeq \mathbb{P}_n\{(- \infty, x]\} = \frac{1}{2\pi}\left(x - \frac{\sin(x)}{n} \right) \text{ for all } x \in T.$$

As $n \rightarrow \infty$, the sequence $G_n$ converges pointwise to the distribution function of the uniform (normalized) Lebesgue measure $\mathbb{P}$ on $[-\pi, \pi]$. Thus $\mathbb{P}_n \overset{weak}\longrightarrow \mathbb{P}$, that is, 
$$\lim_{n \rightarrow \infty} \int_T f d\mathbb{P}_n = \int_{T} f d\mathbb{P} \text{ for all bounded continuous } f \co T \rightarrow \mathbb{R}.$$ 

By an equivalent criterion for weak convergence, we obtain:
\begin{align}\label{first weak inequality for open sets}
     \liminf_{n \rightarrow \infty} \mathbb{P}_n(U) \geq \mathbb{P}(U) \text{ for all open subsets } U \subseteq T.
\end{align}

By the nonstandard characterization of limit inferiors, this is equivalent to:
\begin{align}\label{first inequality for open sets '}
    \mathbb{P}_N({^*}U) \geq \mathbb{P}(U) \text{ for all open subsets } U \subseteq T \text{ and } N > \mathbb{N}.
\end{align}

Since the density function $g_n$ for $\mathbb{P}_n$ is pointwise bounded above by the density function for $\mathbb{P}$, by transfer we also obtain the other side of the above inequality. That is, we obtain:

\begin{align}\label{second inequality for open sets '}
    \mathbb{P}_N({^*}U) \leq \mathbb{P}(U) \text{ for all open subsets } U \subseteq T \text{ and } N > \mathbb{N}.
\end{align}

Combining \eqref{first inequality for open sets '} and \eqref{second inequality for open sets '}, we obtain:

\begin{align}\label{weak convergence equality}
    \mathbb{P}_N({^*}U) = \mathbb{P}(U) \text{ for all open subsets } U \subseteq T \text{ and } N > \mathbb{N}.
\end{align}

By Proposition \ref{TFAE}, we obtain:
\begin{align}\label{bounded functions over T}
    \int\limits_{{^*}T} \st({^*}f) dL\mathbb{P}_N = \int\limits_{T} f d\mathbb{P} \text{ for all bounded measurable } f\co T \rightarrow \mathbb{R} \text{ and } N > \mathbb{N}.
\end{align}  

For any $f \in L^1(T, \lambda)$, we use the facts that $\abs{g_n} \leq \frac{1}{\pi}$ and $f \in L^1(T, \lambda)$ to get:
\begin{align}\label{double limit over T}
    \lim_{m \rightarrow \infty}\lim_{n \rightarrow \infty} \int_T \abs{f}\mathbbm{1}_{\abs{f} > m} d\mathbb{P}_n(x) \leq \frac{1}{\pi} \lim_{m \rightarrow \infty} \int_T \abs{f}\mathbbm{1}_{\abs{f} > m} d\lambda(x) = 0.
\end{align}

Using \eqref{bounded functions over T} and \eqref{double limit over T} in Theorem \ref{main equivalence} (with $(T, \mathbb{P}_n)$ playing the role of $(\Omega_n, \nu_n)$ in that theorem), we obtain, for each $f \in L^1(T, \lambda) = L^1(T, \mathbb{P})$:
\begin{align*}
    &\lim_{n \rightarrow \infty} \int_T f(x) d\mathbb{P}_n(x) = \int_T f(x) d\mathbb{P}(x) \\
    \Rightarrow  &\lim_{n \rightarrow \infty} \int_T \left(\frac{f(x)}{2\pi}  - \frac{f(x) \cos(nx)}{2\pi}\right) d\lambda(x) = \int_T \frac{f(x)}{2\pi} d\lambda(x)\\
   \Rightarrow &\lim_{n \rightarrow \infty} \int_T f(x) \cos(nx) d\lambda(x) = 0.
\end{align*}

The proof for $\sin(nx)$ goes exactly the same way if we replace the $f_n$ by the probability density functions $g_n(x) = \frac{1 - \sin(nx)}{2\pi}$ for $x \in T$.
\end{proof}

\subsection{What happens if the finite dimensional function is not nice in the limiting space?} In general, for a function $f\co E^k \to \mathbb{R}$ (not necessarily satisfying the conditions in Theorem \ref{main equivalence}), the following result allows us to still approximate its integral by a suitably modified sequence of integrals over $(\Omega_n, \nu_n)$. Note that this result is in the spirit of Littlewood's three principles from measure theory (see \cite[p.26]{littlewood}) -- approximating a potentially ill-behaved integrable function by well-behaved bounded functions.

\begin{lemma}\label{approximation}
In the setting of Corollary \ref{consequences}, let $f\co E^k \to \mathbb{R}$ be $\mathbb{P}$--integrable. Given any $\epsilon, \delta, \theta \in \mathbb{R}_{>0}$ there exist an $n_0 \in \mathbb{N}$ and functions $g_n \co \Omega_n \to \mathbb{R}$ for all $n \in \mathbb{N}_{\geq n_0}$ such that the following hold:
\begin{enumerate}[(i)]
    \item\label{g_n integrable} $\abs{g_n}$ is bounded by $n$ for all $n \in \mathbb{N}_{\geq n_0}$.
    \item\label{g_n close to f} $\nu_n\left( \abs{g_n - f} > \delta \right) < \epsilon$ for all $n \in \mathbb{N}_{\geq n_0}$.
    \item\label{integral of g_n is close} $\abs{\int_{\Omega_n}g_n d\nu_n - \int_{E^k} f d\mathbb{P}} < \theta$ for all $n \in \mathbb{N}_{\geq n_0}$.
\end{enumerate}
\end{lemma}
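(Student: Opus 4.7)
The plan is to take $g_n$ to be the truncation $f_m \defeq f \cdot \mathbbm{1}_{\{\abs{f} \leq m\}}$ for a single, well-chosen threshold $m \in \mathbb{N}$, transported to $\Omega_n$ via the projection onto the first $k$ coordinates. I will pick $m$ large enough that $f_m$ captures enough of the mass of $f$ (to secure \ref{integral of g_n is close}) and also so that $\{\abs{f} > m\}$ has small probability (to secure \ref{g_n close to f}); condition \ref{g_n integrable} will then be automatic as soon as $n \geq m$.

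First, since $f \in L^1(\mathbb{P})$, the dominated convergence theorem together with Markov's inequality lets me pick an $m \in \mathbb{N}$ with $m \geq \delta$ satisfying
\[
\int_{E^k} \abs{f}\mathbbm{1}_{\{\abs{f} > m\}} d\mathbb{P} < \frac{\theta}{2} \quad \text{and} \quad \mathbb{P}(\abs{f} > m) < \frac{\epsilon}{2}.
\]
Then set $g_n \defeq f_m$ on $\Omega_n$ for every $n \geq m$, so that $\abs{g_n} \leq m \leq n$ and condition \ref{g_n integrable} holds. For \ref{g_n close to f}, the pointwise identity $\abs{g_n - f} = \abs{f}\mathbbm{1}_{\{\abs{f} > m\}}$ combined with $m \geq \delta$ yields $\{\abs{g_n - f} > \delta\} \subseteq \{\abs{f} > m\}$, so Corollary \ref{consequences}\ref{bounded measurable} applied to the bounded measurable indicator $\mathbbm{1}_{\{\abs{f} > m\}}$ gives $\lim_{n \to \infty} \nu_n(\abs{f} > m) = \mathbb{P}(\abs{f} > m) < \epsilon/2$, forcing $\nu_n(\abs{g_n - f} > \delta) < \epsilon$ for all $n$ beyond some threshold $n_1$. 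For \ref{integral of g_n is close}, the triangle inequality
\[
\abs{\int_{\Omega_n} g_n d\nu_n - \int_{E^k} f d\mathbb{P}} \leq \abs{\int_{\Omega_n} f_m d\nu_n - \int_{E^k} f_m d\mathbb{P}} + \int_{E^k} \abs{f - f_m} d\mathbb{P}
\]
bounds the right-hand side by $\theta/2 + \theta/2 = \theta$ for all $n$ beyond some threshold $n_2$, again by Corollary \ref{consequences}\ref{bounded measurable} applied to the bounded function $f_m$. Taking $n_0 \defeq \max\{m, n_1, n_2\}$ then delivers all three conclusions simultaneously.

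No step here is a genuine obstacle: the argument is the standard truncation reduction, and its only nontrivial input---convergence of $\nu_n$-integrals to $\mathbb{P}$-integrals for bounded measurable functions---is already in hand from Corollary \ref{consequences}\ref{bounded measurable}. The conceptual content of the lemma is precisely that although an unbounded $\mathbb{P}$-integrable $f$ may fail the clean limit of Theorem \ref{main equivalence}, one can always exhibit a well-behaved bounded replacement sequence that simultaneously approximates $f$ in $\nu_n$-probability and whose integrals converge to $\int_{E^k} f\, d\mathbb{P}$.
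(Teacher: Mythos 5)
Your proof is correct, and it takes a genuinely different route from the paper. The paper's argument is a nonstandard one: it invokes Corollary \ref{consequences}\ref{integrable} to get Loeb integrability of $\st({^*}f)$ on $\Omega_N$ for $N>\mathbb{N}$, lifts it to an $S$--integrable internal $G_N$ via Theorem \ref{S-integrable lifting}, truncates so $\abs{G_N}\le N$, shows the internal set of indices admitting such an approximant contains all hyperfinite $N$, then pulls the existence of the $g_n$'s back to finite $n$ by \emph{underflow} and transfer. Your argument is entirely classical: truncate $f$ once at a standard level $m$ chosen large enough via dominated convergence and Markov's inequality, observe $\abs{f_m-f}=\abs{f}\mathbbm{1}_{\{\abs{f}>m\}}$, and invoke only Corollary \ref{consequences}\ref{bounded measurable} (the bounded case) for the indicator $\mathbbm{1}_{\{\abs{f}>m\}}$ and for $f_m$. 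Both are correct; the comparison is instructive. Your truncation argument is shorter, constructive (it exhibits an explicit $g_n$ independent of $n$), and---because $g_n$ is already a bounded function on $E^k$---it actually proves Theorem \ref{approximation corollary} simultaneously, making the paper's separate step \eqref{measure extension} unnecessary. The paper's lifting--underflow argument is less elementary but is arguably a deliberate demonstration of how Loeb liftings and underflow produce finite-level approximation results in the graded-measure-space setting; it also reflects the Littlewood-principles philosophy the author flags, where an arbitrary (not necessarily truncation-shaped) internal approximant is allowed. Neither approach requires the hypotheses beyond what Corollary \ref{consequences} supplies, and nothing in your proposal has a gap.

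One minor remark: the constraint $m\ge\delta$ you impose is not actually needed. Since $\abs{g_n - f} = \abs{f}\mathbbm{1}_{\{\abs{f}>m\}}$ vanishes identically off $\{\abs{f}>m\}$, the inclusion $\{\abs{g_n-f}>\delta\}\subseteq\{\abs{f}>m\}$ holds for any $\delta>0$ without further conditions; the extra hypothesis is harmless but superfluous.
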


\begin{proof}
By Corollary \ref{consequences}\ref{integrable}, we know that 
$$\int_{E^k}\abs{f} d\mu = \int_{\Omega_N} \st({^*}\abs{f}) dL\nu_N \text{ for all } N > \mathbb{N}.$$

Thus, for any $N> \mathbb{N}$, the map $\st({{^*}f})$ is Loeb integrable on $\Omega_N$, and hence has an $S$--integrable lifting $G_N \co \Omega_N \rightarrow {^*}\mathbb{R}$ by Theorem \ref{S-integrable lifting}. In particular, 
\begin{gather}
    L\nu_N\left(\st(G_N) = \st({^*}f)\right) = 1 \label{g_N 1} \text{, and } \\
    \st\left(\starint_{\Omega_N} G_N d\nu_N \right) = \int_{\Omega_N} \st({G_N}) dL\nu_N = \int_{\Omega_N} \st({^*}f) dL\nu_N = \int_{E^k} f d\mathbb{P}. \label{g_N 2}
\end{gather}

Equation \eqref{g_N 1} follows from the definition of lifting. The first equality in \eqref{g_N 2} follows from Theorem \ref{S-integrable TFAE}\ref{S4} applied to the nonnegative $S$--integrable functions $(G_N)_+ \defeq \max\{G_N, 0\}$ and $(G_N)_- \defeq \max \{-G_N, 0\}$. The second equality in \eqref{g_N 2} follows from equation \eqref{g_N 1}, while the last equality in \eqref{g_N 2} follows from Corollary \ref{consequences}[\ref{integrable}]. 

Without loss of generality, we can assume that $\abs{G_N} \leq N$ for all $N > \mathbb{N}$ (as we may replace $G_N$ by the function $G_N \mathbbm{1}_{\abs{G_N} \leq N}$, which still satisfies \eqref{g_N 1} and \eqref{g_N 2}). Thus, for the given $\epsilon, \delta, \theta \in \mathbb{R}_{>0}$, the following internal set contains ${^*}\mathbb{N} \backslash \mathbb{N}$.
\begin{align*}
    \mathcal{G}_{\epsilon, \delta, \theta} \defeq \Bigg\{ n &\in {^*}\mathbb{N}: ~\exists G_n \in {^*}L^1(\Omega_n, \nu_n) \text{ such that } ~\abs{G_n} \leq n, \\
    &{^*}\nu_n\left( \abs{G_n - {^*}f} > \delta \right) < \epsilon, \text{ and } \abs{\starint_{{^*}\Omega_n} G_n d{^*}\nu_n - \int_{E^k} f d\mathbb{P}} < \theta \Bigg\}.
\end{align*}

By underflow, we find $n_0 \in \mathbb{N}$ such that $\mathbb{N}_{\geq n_0} \subseteq \mathcal{G}_{\epsilon, \delta, \theta}$. Now fix an $n \in \mathbb{N}_{\geq n_0}$. In the nonstandard universe, the following statement is true:
\begin{align*}
    &\exists G_n \in {^*}L^1(\Omega_n, \nu_n) \\
    &\left((\abs{G_n} \leq n) \land \left( {^*}\nu_n\left( \abs{G_n - {^*}f} > \delta \right) < \epsilon) \right) 
    \land \left(\abs{\starint_{{^*}\Omega_n} G_n d{^*}\nu_n - \int\limits_{E^k} f d\mathbb{P}} < \theta \right)\right).
\end{align*}
Transfer of this sentence yields a $g_n \in L^1(\Omega_n, \nu_n)$ with the desired properties. \qedhere


\end{proof}

We can strengthen Lemma \ref{approximation} as follows, by requiring the functions to have the same domain $E^k$.

\begin{theorem}\label{approximation corollary}
In the setting of Corollary \ref{consequences}, let $f\co E^k \to \mathbb{R}$ be $\mathbb{P}$--integrable. Given any $\epsilon, \delta, \theta \in \mathbb{R}_{>0}$ there exist an $n_0 \in \mathbb{N}$ and functions $g_n \co E^k \to \mathbb{R}$ for all $n \in \mathbb{N}_{\geq n_0}$ such that the following hold:
\begin{enumerate}[(i)]
    \item\label{E^k 1} $\abs{g_n}$ is bounded by $n$ for all $n \in \mathbb{N}_{\geq n_0}$.
    \item\label{E^k 2} $\nu_n\left( \abs{g_n - f} > \delta \right) < \epsilon$ for all $n \in \mathbb{N}_{\geq n_0}$.
    \item\label{E^k 3} $\abs{\int_{\Omega_n}g_n d\nu_n - \int_{E^k} f d\mathbb{P}} < \theta$ for all $n \in \mathbb{N}_{\geq n_0}$.
\end{enumerate}
\end{theorem}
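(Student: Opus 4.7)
My plan is to avoid invoking the nonstandard lifting used in the proof of Lemma \ref{approximation} and instead exhibit $g_n$ as a single bounded truncation of $f$ that is common to all sufficiently large $n$. Concretely, for each $m \in \mathbb{N}$ define the bounded measurable function $h_m \defeq f\mathbbm{1}_{\{\abs{f} \leq m\}}$ on $E^k$, which is pointwise bounded in absolute value by $m$. The key observation is that because the approximation is now intrinsic to $E^k$, I can use the already-established convergence results for bounded measurable functions without constructing per-$n$ liftings.

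Using $\mathbb{P}$-integrability of $f$ together with the dominated convergence theorem, I first fix $m_0 \in \mathbb{N}$ large enough that $\mathbb{P}(\abs{f} > m_0) < \epsilon/2$ and $\int_{E^k} \abs{f}\mathbbm{1}_{\{\abs{f} > m_0\}} d\mathbb{P} < \theta/2$ both hold. Since $h_{m_0}$ and $\mathbbm{1}_{\{\abs{f} > m_0\}}$ are bounded measurable, Corollary \ref{consequences}\ref{bounded measurable} gives $\lim_{n \to \infty} \int_{\Omega_n} h_{m_0} d\nu_n = \int_{E^k} h_{m_0} d\mathbb{P}$ and $\lim_{n \to \infty} \nu_n(\abs{f} > m_0) = \mathbb{P}(\abs{f} > m_0)$. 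I then select $n_0 \in \mathbb{N}_{\geq m_0}$ large enough that for every $n \geq n_0$ both $\abs{\int_{\Omega_n} h_{m_0} d\nu_n - \int_{E^k} h_{m_0} d\mathbb{P}} < \theta/2$ and $\nu_n(\abs{f} > m_0) < \epsilon$ hold.

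Setting $g_n \defeq h_{m_0}$ for every $n \geq n_0$ (the same bounded function for every index, viewed on $\Omega_n \subseteq E^{n'}$ via the projection onto the first $k$ coordinates), verification is routine: condition \eqref{E^k 1} follows from $\abs{g_n} \leq m_0 \leq n_0 \leq n$; condition \eqref{E^k 2} follows from the inclusion $\{\abs{h_{m_0} - f} > \delta\} \subseteq \{\abs{f} > m_0\}$ combined with the tail estimate on $\nu_n(\abs{f} > m_0)$; and condition \eqref{E^k 3} follows by splitting via the triangle inequality
$$\abs{\int_{\Omega_n} g_n d\nu_n - \int_{E^k} f d\mathbb{P}} \leq \abs{\int_{\Omega_n} h_{m_0} d\nu_n - \int_{E^k} h_{m_0} d\mathbb{P}} + \int_{E^k} \abs{f - h_{m_0}} d\mathbb{P},$$
each summand being at most $\theta/2$ by construction. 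I do not anticipate any serious obstacle; the only point requiring mild care is the consistency of the projection convention already baked into the paper's notation (identifying $f\co E^k \to \mathbb{R}$ with its pullback to $\Omega_n$), so that the same $h_{m_0}$ serves as a function on every $\Omega_n$ simultaneously.
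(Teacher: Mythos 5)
Your argument is correct, and it is a genuinely different—and substantially more elementary—route than the paper's. The paper derives Theorem \ref{approximation corollary} as a consequence of Lemma \ref{approximation}: it transfers the probability measures to $E^k$ and then invokes Lemma \ref{approximation}, whose proof relies on the nonstandard machinery of $S$--integrable liftings (Theorem \ref{S-integrable lifting}) together with underflow, producing a sequence $(g_n)$ that in general varies with $n$. You bypass all of that by taking a single truncation $h_{m_0} \defeq f\mathbbm{1}_{\{\abs{f} \leq m_0\}}$ and setting $g_n = h_{m_0}$ for every $n \geq n_0$; the only input you need is the already-available convergence for bounded measurable functions (Corollary \ref{consequences}\ref{bounded measurable}) and the $\mathbb{P}$-integrability of $f$ (so that $\mathbb{P}(\abs{f}>m_0)$ and $\int \abs{f}\mathbbm{1}_{\{\abs{f}>m_0\}}d\mathbb{P}$ can both be made small). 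The verification of \eqref{E^k 1}--\eqref{E^k 3} is exactly as you present it: in particular the inclusion $\{\abs{h_{m_0}-f}>\delta\} \subseteq \{\abs{f}>m_0\}$ is correct because $h_{m_0}$ and $f$ agree off $\{\abs{f}>m_0\}$, and the triangle-inequality split for \eqref{E^k 3} closes using $\int_{E^k}\abs{f - h_{m_0}}\,d\mathbb{P} = \int_{E^k}\abs{f}\mathbbm{1}_{\{\abs{f}>m_0\}}\,d\mathbb{P} < \theta/2$. What your approach buys is a fully standard, constructive proof that in addition reveals the approximating functions can be taken independent of $n$; what the paper's route buys is Lemma \ref{approximation} itself, which is a statement about functions $g_n: \Omega_n \to \mathbb{R}$ with no requirement of factoring through $E^k$, and which is closer in spirit to the lifting viewpoint the paper is developing.
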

\begin{proof}
For $n \in \mathbb{N}_{\geq k}$, define $\nu_n' \co \mathcal{E}_k \to [0,1]$ by $\nu_n'(B) = \nu_n((B \times E^{n-k}) \cap \Omega_n)$. For any bounded measurable $g\co E^k \to \mathbb{R}$, expressing $g$ as a uniform limit of simple functions yields 
\begin{align}\label{measure extension}
    \int_{\Omega_n} g d\nu_n = \int_{E^k} g d\nu_n'. 
\end{align}
Let $(g_n)_{n \in \mathbb{N}}$ be a sequence of functions obtained by applying Lemma \ref{approximation} to the sequence $(E^k, \nu_n')_{n \in \mathbb{N}}$ of probability spaces. Then \ref{E^k 1}, \ref{E^k 2} and \ref{E^k 3} follow from the corresponding results in Lemma \ref{approximation} together with (\ref{measure extension}).
\end{proof}
\end{section}

\begin{section}{Generalizing Poincar\'e's theorem}\label{section 4}
\subsection{Revisiting a standard proof of Poincar\'e's theorem}\label{computational proof}
For the rest of the paper, we let $S_n$ denote the sphere $S^{n-1}(\sqrt{n})$ and $\bar{\sigma}_n$ denote $\bar{\sigma}_{S_n}$, for all $n \in \mathbb{N}$. Fix $k \in \mathbb{N}$ and let $\mu$ denote the standard k-dimensional Gaussian measure. Let $B_k(a)$ denote the open ball of radius $a$ in $\mathbb{R}^k$. For a set $B \in \mathcal{B}(\mathbb{R}^k)$ and any $n \in \mathbb{N}_{\geq k}$, we define $\bar{\sigma}_n(B)$ to be the value of $\bar{\sigma}_n(\{x \in S_n : \pi_{k}(x) \in B\}) = \bar{\sigma}_n\left((B \times \mathbb{R}^{n-k}) \cap S_n\right)$, where $\pi_{k}$ is the projection onto $\mathbb{R}^k$. Similarly, a function $f\co \mathbb{R}^k \to \mathbb{R}$ is canonically extended to $\mathbb{R}^n$ by using `$f(x, y)$' to denote $f(x)$ for all $x \in \mathbb{R}^k$ and $y \in \mathbb{R}^{n-k}$. 

In an attempt to generalize Theorem \ref{Poincare's theorem}, we first look at another proof of the same result using classical analysis. This proof requires directly evaluating the spherical integrals and using dominated convergence theorem (compare with the less computational proof of Theorem \ref{Poincare's theorem} in Section \ref{nonstandard Poincare}). We again rephrase Theorem \ref{Poincare's theorem} below. 

\begin{reptheorem}{Poincare's theorem}
For all bounded measurable functions $f\co \mathbb{R}^k \to \mathbb{R}$, we have:
\begin{align*}
\lim_{n \rightarrow \infty} \int_{S^{n-1}(\sqrt{n})} f d\bar{\sigma} = \int_{\mathbb{R}^k} f d\mu.    
\end{align*}\end{reptheorem}

\begin{proof}
Let $\lambda$ denote the Lebesgue measure on $\mathbb{R}^k$. By Sengupta's disintegration formula (see Sengupta \cite[Proposition 4.1]{Sengupta}), we have the following chain of equalities for any bounded measurable $f \co \mathbb{R}^k \to \mathbb{R}$.

\begin{flalign}
     \nonumber &\int\limits_{S^{n-1}(\sqrt{n})} f d\bar{\sigma}_n \\
     \nonumber &= \frac{1}{\sigma(S_n)} \int\limits_{x \in B_k(\sqrt{n})} \int_{y \in S^{n - k - 1} (\sqrt{n - \norm{x}^2})} f(x, y) d\sigma(y) \frac{\sqrt{n}}{\sqrt{n - \norm{x}^2}} d\lambda(x) \\
    \nonumber &= \frac{1}{\sigma(S_n)} \int_{\mathbb{R}^k} 
    \sigma\left(S^{n - k - 1} \left(\sqrt{n - \norm{x}^2}\right)\right) \cdot \frac{\mathbbm{1}_{B_k(\sqrt{n})}(x) f(x) \sqrt{n}}{\sqrt{n - \norm{x}^2}} d\lambda(x) \\
     \nonumber &= \frac{\Gamma\left(\frac{n}{2}\right)}{2 \pi^{\frac{n}{2}} \cdot (\sqrt{n})^{n-1}} \int\limits_{\mathbb{R}^k} \frac{2 \pi^{\frac{n - k}{2}} (n - \norm{x}^2)^{\frac{n - k - 1}{2}}}{\Gamma\left(\frac{n - k}{2}\right)} \cdot \frac{\mathbbm{1}_{B_k(\sqrt{n})}(x) f(x) \sqrt{n}}{\sqrt{n - \norm{x}^2}} d\lambda(x) \\
  &= \label{disintegration} a_{n, k} b_{n, k} \int_{\mathbb{R}^k} \frac{1}{{(\sqrt{2 \pi})}^k} \left(1 - \frac{\norm{x}^2}{n} \right)^{\frac{n}{2}} \frac{\mathbbm{1}_{B_k(\sqrt{n})}(x)f(x)}{\left(1 - \frac{\norm{x}^2}{n} \right)^{\frac{k+2}{2}}} d\lambda(x), 
    \end{flalign}
where $a_{n, k} = \frac{\Gamma\left(\frac{n}{2}\right)}{\Gamma\left(\frac{n - k}{2}\right) \cdot \left(\frac{n-k}{2}\right)^{\frac{k}{2}}} \text{ and } b_{n, k} = \left(1 - \frac{k}{n}\right)^{\frac{k}{2}}.$

Note that $\lim_{n \rightarrow \infty} a_{n, k} = \lim_{n \rightarrow \infty} b_{n, k} = 1$ for all $k \in \mathbb{N}$ (the first limit following from Stirling's formula, see Rudin \cite[equation 103, p. 194]{Rudin}). 

Modulo constants, for large values of $n$, the integrand in \eqref{disintegration} is bounded by $\abs{f(x)} e^{-\frac{\norm{x}^2}{4}}$, which is integrable on $\mathbb{R}^k$ since $f$ is assumed to be bounded. Thus by the dominated convergence theorem, the integral in (\ref{disintegration}) converges to $\int_{\mathbb{R}^k} f d\mu$ as $n \rightarrow \infty$, as desired.
\end{proof}

\begin{remark}\label{remark on dct not working}
Due to the factor of $\left(1 - \frac{\norm{x}^2}{n} \right)^{\frac{k+2}{2}}$ in the denominator of \eqref{disintegration}, dominated convergence theorem does not directly work when we work with an unbounded function $f$, as there is no reason for $\abs{f(x)} e^{-\frac{\norm{x}^2}{4}}$ to be Lebesgue integrable in general. Indeed for a general Gaussian integrable $f$, we can bound $\abs{f(x)}\left(1 - \frac{\norm{x}^2}{n} \right)^{\frac{n}{2}}$ by $\abs{f(x)} e^{-\frac{\norm{x}^2}{2}}$, but there is still no obvious way to bound the whole integrand in \eqref{disintegration} by a Lebesgue integrable function due to that extra factor in the denominator.
\end{remark}

\begin{corollary}\label{finite coordinates}
For $k \in \mathbb{N}$ and $N > \mathbb{N}$, almost all points on $S_N$ have finite first $k$ coordinates. That is,
\begin{align*}
        L\bar{\sigma}_N(\{(x_1, \ldots, x_N) \in S^{N-1}(\sqrt{N}): x_1, \ldots, x_k \in {^*}\mathbb{R}_{\text{fin}}\}) = 1.
    \end{align*}
\end{corollary}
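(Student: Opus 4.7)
The plan is to recognize this corollary as an immediate application of the abstract machinery of Section \ref{section 2} once Poincar\'e's theorem has been transported to the hyperfinite setting. Concretely, I would verify hypothesis \eqref{Prop 1} of Proposition \ref{Borel equivalence} with $E = \mathbb{R}$, $\Omega = S_N$, $\mathcal{F}$ the restriction of ${^*}\mathcal{B}_N$ to $S_N$, $\nu = \bar{\sigma}_N$, and $\mathbb{P} = \mu_{(k)}$, and then invoke Corollary \ref{functions are finite almost everywhere}.

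For the verification, fix a bounded measurable $f \co \mathbb{R}^k \to \mathbb{R}$. Theorem \ref{Poincare's theorem} gives $\lim_{n \to \infty} \int_{S_n} f \, d\bar{\sigma}_n = \int_{\mathbb{R}^k} f \, d\mu_{(k)}$, and the nonstandard characterization of limits (Proposition \ref{limits of sequences}) upgrades this to
\[
\starint_{S_N} {^*}f \, d\bar{\sigma}_N \approx \int_{\mathbb{R}^k} f \, d\mu_{(k)}
\]
for every $N > \mathbb{N}$. Since ${^*}f$ is bounded by transfer, it is trivially $S$--integrable, so Theorem \ref{S-integrable TFAE}\ref{S4} identifies its ${^*}$--integral (modulo infinitesimals) with the Loeb integral of $\st({^*}f)$ over $(S_N, L\bar{\sigma}_N)$. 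Taking standard parts on both sides gives precisely the equality \eqref{Prop 1}.

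With \eqref{Prop 1} in hand, I would apply Corollary \ref{functions are finite almost everywhere} to the Borel measurable function $g \co \mathbb{R}^k \to \mathbb{R}$ defined by $g(x_1, \ldots, x_k) = \max(|x_1|, \ldots, |x_k|)$, obtaining
\[
L\bar{\sigma}_N(\{x \in S_N : {^*}g(x) \in {^*}\mathbb{R}_{\text{fin}}\}) = 1.
\]
Since ${^*}g(x_1, \ldots, x_N) = \max(|x_1|, \ldots, |x_k|)$ is finite in ${^*}\mathbb{R}$ if and only if each of $x_1, \ldots, x_k$ is finite, this is exactly the desired statement. I do not foresee any substantial obstacle: the proof is essentially an assembly of already-established pieces. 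The only point of caution is to confirm that the abstract setup of Proposition \ref{Borel equivalence} genuinely applies here --- that is, that $S_N$ is an internal subset of ${^*}\mathbb{R}^N$ (which follows from transfer of the standard definition of $S_n$) and that the internal algebra on $S_N$ obtained from transferring the sequence $(\mathcal{B}(S_n))_{n \in \mathbb{N}}$ coincides with the restriction of ${^*}\mathcal{B}_N$ to $S_N$, both of which are straightforward.
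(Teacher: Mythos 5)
Your proof is correct and follows essentially the same route as the paper's: both derive $L\bar{\sigma}_N({^*}B) = \mu(B)$ for all Borel $B \subseteq \mathbb{R}^k$ from Theorem \ref{Poincare's theorem} via the nonstandard characterization of limits, and then pass to a countable increasing union of boxes. The paper does this directly with the sets $(-m,m)^k$, while you route through Proposition \ref{Borel equivalence} and Corollary \ref{functions are finite almost everywhere} applied to $g(x) = \max(|x_1|,\ldots,|x_k|)$; unwinding the proof of that corollary gives exactly the paper's computation with $B_m = (-m,m)^k$.
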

\begin{proof}
Fix $k$ and $N$ as above. If $m \in \mathbb{N}$, we have $L\bar{\sigma}_N ({^*}(-m,m)^k) = \mu((-m, m)^k)$ by Theorem \ref{Poincare's theorem}. Letting $m \rightarrow \infty$ on both sides completes the proof.
\end{proof}

\begin{corollary}\label{equator 2}
For any $t \in \mathbb{R}_{>1}$, we have  $$\lim_{n \rightarrow \infty}\int_{\{x \in \mathbb{R}^k: \frac{n}{t} < \norm{x}^2 < n\}} \left(1 - \frac{\norm{x}^2}{n} \right)^{\frac{n}{4}} d\lambda(x) = 0.$$
\end{corollary}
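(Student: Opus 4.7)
The plan is to bound the integrand uniformly on the annular region and then note that the decay beats the polynomial volume growth. Specifically, on the set $\{x \in \mathbb{R}^k : \frac{n}{t} < \norm{x}^2 < n\}$, we have $1 - \frac{\norm{x}^2}{n} < 1 - \frac{1}{t}$, so the integrand is bounded above by the constant $\left(1 - \frac{1}{t}\right)^{n/4}$, uniformly in $x$ (but depending on $n$).

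To make the geometry transparent, I would rescale by $y = x/\sqrt{n}$, so that $d\lambda(x) = n^{k/2}\, d\lambda(y)$ and the domain becomes the annular region $\{y \in \mathbb{R}^k : \frac{1}{t} < \norm{y}^2 < 1\}$. This rewrites the integral as
\begin{align*}
    \int_{\{\frac{n}{t} < \norm{x}^2 < n\}} \left(1 - \frac{\norm{x}^2}{n}\right)^{\frac{n}{4}} d\lambda(x) = n^{\frac{k}{2}} \int_{\{\frac{1}{t} < \norm{y}^2 < 1\}} (1 - \norm{y}^2)^{\frac{n}{4}} d\lambda(y).
\end{align*}
Applying the uniform bound $(1 - \norm{y}^2)^{n/4} \leq \left(1 - \frac{1}{t}\right)^{n/4}$ on the integration domain and letting $V_k$ denote the Lebesgue measure of the unit ball in $\mathbb{R}^k$, the right-hand side is at most $V_k \cdot n^{k/2} \cdot \left(1 - \frac{1}{t}\right)^{n/4}$.

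The main (minor) point is that $1 - \frac{1}{t} < 1$ since $t > 1$, so $\left(1 - \frac{1}{t}\right)^{n/4}$ decays exponentially in $n$, whereas $n^{k/2}$ grows only polynomially. Thus the product tends to zero as $n \to \infty$, completing the proof. There is no substantive obstacle here; the only thing to watch is the change of variables Jacobian and the fact that the uniform bound is obtained precisely because we cut off the region of integration away from the sphere $\norm{x}^2 = n$ (which is why the hypothesis $t > 1$ is essential — without the gap $\frac{1}{t}$, the integrand approaches $1$ near the boundary and the uniform bound would fail).
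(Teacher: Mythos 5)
Your proof is correct, and it is genuinely more elementary than either argument in the paper. The paper's proof of Corollary~\ref{equator 2} goes through the nonstandard machinery: it uses Corollary~\ref{finite coordinates} (almost all points on the hyperfinite sphere $S^{N-1}(\sqrt{N})$ have finite first $k$ coordinates) together with the disintegration formula \eqref{disintegration} to deduce that a related sequence of integrals tends to $0$, and then observes the stated integrand is dominated by a constant times that one. The paper also sketches an alternative in Remark following the corollary, namely bounding $\bigl(1 - \tfrac{\norm{x}^2}{n}\bigr)^{n/4}\mathbbm{1}_{\{\norm{x}^2 \le n\}} \le e^{-\norm{x}^2/4}$ and appealing to the dominated convergence theorem (the indicator of the shrinking annulus tends to $0$ pointwise). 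Your argument avoids both: because the domain is cut off strictly inside the sphere $\norm{x}^2 = n$, the integrand admits the uniform bound $\bigl(1 - \tfrac1t\bigr)^{n/4}$, and multiplying by the Lebesgue volume of the ball of radius $\sqrt n$ gives $V_k\,n^{k/2}\bigl(1 - \tfrac1t\bigr)^{n/4} \to 0$ directly, exponential decay crushing the polynomial factor. The one thing you would lose by preferring your route is the point the author explicitly flags in the remark: the paper's proof is chosen precisely \emph{because} it exhibits a link between these annular integrals and high-dimensional spherical surface measures, which is thematically central to the section; your calculation is cleaner but structurally detached from that narrative.
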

\begin{proof}
Let $t \in \mathbb{R}_{>1}$ and $N > \mathbb{N}$. As a consequence of Corollary \ref{finite coordinates}, we obtain:
\begin{align*}
    \bar{\sigma}_N \left(\left\{ x \in S^{N-1}(\sqrt{N}): \frac{N}{t} < \norm{\pi_{k}(x)}^2 < N \right\}\right) \approx 0.
\end{align*}

The nonstandard characterization of limits and equation  (\ref{disintegration}) thus yield the following.
\begin{align}\label{specific theta}
    \lim_{n \rightarrow \infty}\int_{\{x \in \mathbb{R}^k: \frac{n}{t} < \norm{x}^2 < n\}} \frac{1}{{(\sqrt{2 \pi})}^k} \left(1 - \frac{\norm{x}^2}{n} \right)^{\frac{n}{2}} \frac{1}{\left(1 - \frac{\norm{x}^2}{n} \right)^{\frac{k+2}{2}}} d\lambda(x) = 0.
\end{align}
For all $n \in \mathbb{N}_{\geq 2(k+2)}$, the sequence in the statement of the Corollary is bounded above by (a constant times) the sequence in (\ref{specific theta}), thus completing the proof.
\end{proof}

\begin{remark}
We can also prove Corollary \ref{equator 2} directly by noting that $$\left(1 - \frac{\norm{x}^2}{n}\right)^{\frac{n}{4}}\mathbbm{1}_{\norm{x}^2 \leq n} \leq e^{-\frac{\norm{x}^2}{4}},$$ where the right side is Lebesgue integrable over $\mathbb{R}^k$. The proof presented above is still valuable because it exposes a connection between these integrals and surface area measures.  
\end{remark}

\subsection{A useful inequality between spherical and Gaussian measures}
In this subsection, we derive an inequality comparing the $L^1$ norm (over the sphere $S^{n-1}(\sqrt{n})$) of a function defined on $\mathbb{R}^k$ and its $p^\text{th}$ moment (for any $p \in \mathbb{R}_{>1}$) with respect to the standard Gaussian measure on $\mathbb{R}^k$. 

With the foresight provided by the philosophy of spherical integrals being close to a Gaussian integral, we expect these spherical integrals to be asymptotically bounded by the $L^p(\mathbb{R}^k, \mu)$--norms as the dimensions increase. Theorem \ref{p>1} shows that depending on the value of $p \in \mathbb{R}_{>1}$, there is a dimension (namely $4(k+1)q$) beyond which this does happen. Before we prove that theorem, we need to generalize Sengupta's disintegration formula to work for any nonnegative function.

\begin{theorem}\label{generalization of Theorem of Sengupta}
Let $N$ and $k$ be positive integers with $k < N$. Suppose $f$ is either a bounded measurable or a nonnegative measurable function on $S^{N-1}(a)$, the sphere in $\mathbb{R}^N = \mathbb{R}^k \times \mathbb{R}^{N-k}$ of radius $a$ and with center $0$. Then, with $\sigma$ denoting surface measure (non-normalized) on spheres, \begin{align}\label{Sengupta's formula generalized}
    \int_{z \in S^{N-1}(a)} {f(z)} d\sigma(z) = \int_{x \in B_k(a)}\left( \int_{y \in S^{N-k-1}(a_x)} {f(x, y)} d\sigma(y)\right) \frac{a}{a_x} dx
\end{align}
for any $a \in \mathbb{R}_{>0}$, where $a_x = \sqrt{a^2 - \norm{x}^2}$. The above equality means that either both sides are finite and equal, or both sides are infinite. 
\end{theorem}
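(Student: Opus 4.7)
The plan is to reduce the nonnegative-measurable case to the bounded-measurable case (Sengupta \cite[Proposition 4.1]{Sengupta}) by a standard truncation/monotone convergence argument. I would start by defining, for each $m \in \mathbb{N}$, the truncation $f_m \defeq \min\{f, m\}$. Each $f_m$ is a bounded nonnegative measurable function on $S^{N-1}(a)$, so Sengupta's formula applies and gives
\begin{align*}
\int_{S^{N-1}(a)} f_m \, d\sigma = \int_{B_k(a)} \left( \int_{S^{N-k-1}(a_x)} f_m(x,y) \, d\sigma(y) \right) \frac{a}{a_x} \, dx
\end{align*}
for every $m \in \mathbb{N}$. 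My task is then to show that passing to the limit $m \to \infty$ preserves this identity, with the convention that either both sides are finite and equal or both are $+\infty$.

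For the left-hand side, since $f_m \uparrow f$ pointwise on $S^{N-1}(a)$ and $f_m \geq 0$, the monotone convergence theorem (applied on $S^{N-1}(a)$ with surface measure) gives $\int_{S^{N-1}(a)} f_m \, d\sigma \uparrow \int_{S^{N-1}(a)} f \, d\sigma$, a limit which is either a finite nonnegative number or $+\infty$. For the right-hand side, I would first fix $x \in B_k(a)$ and apply monotone convergence on the smaller sphere $S^{N-k-1}(a_x)$ to obtain
\begin{align*}
I_m(x) \defeq \int_{S^{N-k-1}(a_x)} f_m(x,y) \, d\sigma(y) \uparrow \int_{S^{N-k-1}(a_x)} f(x,y) \, d\sigma(y) \eqdef I(x).
\end{align*}
Each $I_m$ is a measurable function of $x$ (this is implicit in Sengupta's bounded formula, since the outer integral must make sense); hence $I$ is measurable as the pointwise monotone limit of the $I_m$. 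A second application of the monotone convergence theorem on $B_k(a)$, using that the weight $a/a_x$ is nonnegative and finite for $x \in B_k(a)$, then gives
\begin{align*}
\int_{B_k(a)} I_m(x) \frac{a}{a_x} \, dx \uparrow \int_{B_k(a)} I(x) \frac{a}{a_x} \, dx.
\end{align*}

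Because equality of the bounded formula holds for every finite $m$, the two monotone limits must coincide, which is exactly \eqref{Sengupta's formula generalized} under the stated interpretation (both sides finite and equal, or both sides $+\infty$). I do not anticipate any genuine obstacle: the only mild technicality is the measurability of $I(\cdot)$, which is handled for free by the bounded case together with the measurability of monotone limits. No delicate estimates or disintegration arguments beyond those already in Sengupta's bounded formula are required.
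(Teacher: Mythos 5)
Your proof is correct and takes essentially the same route as the paper: the paper also truncates ($f_m \defeq f\cdot\mathbbm{1}_{f\leq m}$ rather than your $\min\{f,m\}$, but either works), applies Sengupta's bounded disintegration formula to each $f_m$, and passes to the limit on both sides via monotone convergence. You have simply spelled out the MCT applications on the inner and outer integrals more explicitly than the paper's one-line proof does.
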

\begin{proof}
If $f$ is bounded measurable, then this is just Sengupta's disintegration formula (see Sengupta \cite[Proposition 4.1]{Sengupta}). Otherwise, if $f$ is nonnegative, then apply Sengupta's disintegration formula to the bounded measurable functions $f_m \defeq {f}\cdot \mathbbm{1}_{{f} \leq m}$ for each $m \in \mathbb{N}$, and then use monotone convergence theorem on both sides to obtain \eqref{Sengupta's formula generalized}.
\end{proof}

\begin{theorem}\label{p>1}
For each $p \in \mathbb{R}_{>1}$, there is a constant $C_p \in \mathbb{R}_{>0}$ such that 
\begin{align}\label{L2 inequality}
    \int_{S^{n-1}(\sqrt{n})} \abs{g} d\bar{\sigma}_n \leq C_p[\mathbb{E}_\mu(\abs{g}^p)]^{\frac{1}{p}} \text{ for all } g \in L^p(\mathbb{R}^k, \mu) \text{ and } n \in \mathbb{N}_{> 4(k+2)q},
\end{align}
where $q \in \mathbb{R}_{>0}$ is such that $\frac{1}{p} + \frac{1}{q} = 1$.
\end{theorem}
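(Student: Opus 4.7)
The plan is to leverage Sengupta's disintegration formula already invoked as \eqref{disintegration} in the proof of Theorem \ref{Poincare's theorem} (and its generalization in Theorem \ref{generalization of Theorem of Sengupta} which handles the nonnegative, unbounded case). Applying that identity to $\abs{g}$ in place of $f$, and multiplying and dividing the integrand by the Gaussian density $e^{-\norm{x}^2/2}/(\sqrt{2\pi})^k$, I would rewrite
$$\int_{S^{n-1}(\sqrt{n})} \abs{g}\, d\bar{\sigma}_n = a_{n,k}\, b_{n,k} \int_{\mathbb{R}^k} \abs{g(x)}\, W_n(x)\, d\mu(x),$$
where $W_n(x) \defeq \mathbbm{1}_{B_k(\sqrt{n})}(x) \cdot \frac{(1-\norm{x}^2/n)^{n/2}\, e^{\norm{x}^2/2}}{(1-\norm{x}^2/n)^{(k+2)/2}}$. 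Since $a_{n,k}\to 1$ and $b_{n,k}\to 1$ (as already noted in the standard proof of Poincar\'e's theorem), the prefactors are bounded by some constant $M_k$ on all $n$ sufficiently large; in particular on $n > 4(k+2)q$, after enlarging the constant if necessary to cover finitely many initial cases.

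Next, H\"older's inequality with conjugate exponents $p$ and $q$ yields
$$\int_{\mathbb{R}^k} \abs{g}\, W_n\, d\mu \leq \left(\int_{\mathbb{R}^k}\abs{g}^p\, d\mu\right)^{1/p} \left(\int_{\mathbb{R}^k} W_n^q\, d\mu\right)^{1/q},$$
so the entire proof reduces to showing that $\norm{W_n}_{L^q(\mu)}$ is bounded by an absolute constant (depending only on $p$ and $k$) whenever $n > 4(k+2)q$. Expanding $W_n^q\, d\mu$ explicitly, the integrand becomes $(1-\norm{x}^2/n)^{q(n-k-2)/2}\, e^{(q-1)\norm{x}^2/2}/(2\pi)^{k/2}$ on $B_k(\sqrt{n})$.

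The key estimate is the elementary inequality $(1-t)^a \leq e^{-at}$ for $a>0$ and $t \in [0,1)$, applied with $t=\norm{x}^2/n$ and $a = q(n-k-2)/2$. It converts the troublesome polynomial factor into $\exp\bigl(-q(n-k-2)\norm{x}^2/(2n)\bigr)$, which combines with $e^{(q-1)\norm{x}^2/2}$ to give a pure Gaussian weight $e^{-\beta_n \norm{x}^2/2}$ with $\beta_n \defeq 1 - q(k+2)/n$. The hypothesis $n > 4(k+2)q$ forces $\beta_n > 3/4$, so the bound $\int W_n^q\, d\mu \leq \int_{\mathbb{R}^k} e^{-3\norm{x}^2/8}\, d\lambda/(2\pi)^{k/2} = (4/3)^{k/2}$ is independent of $n$. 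Setting $C_p \defeq M_k\,(4/3)^{k/(2q)}$ completes the proof.

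The main subtlety is picking the threshold: one needs $n$ large enough that the effective exponent $\beta_n$ is not merely positive but uniformly bounded away from zero, so that the resulting Gaussian-type integral is finite and independent of $n$. The choice $n > 4(k+2)q$ is the cleanest sufficient condition, giving $\beta_n > 3/4$; with a more delicate argument one could likely sharpen the constant, but sharpening the exponent does not seem essential for what follows. Everything else is a routine bookkeeping exercise with the disintegration formula and H\"older's inequality.
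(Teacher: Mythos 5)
Your proposal is correct, and it takes a genuinely different (and cleaner) route than the paper. The paper's proof splits the domain $\{\norm{x}^2 < n\}$ at a radius $\sqrt{n/t}$: on the ``near" region it bounds the singular factor $(1-\norm{x}^2/n)^{-(k+2)/2}$ pointwise by $(t/(t-1))^{(k+2)/2}$ and then uses Jensen's inequality, while on the ``far" region it applies H\"older and controls the residual $\theta_{n,t}$ via Corollary \ref{equator 2}, so the final constant has the form $a_{n,k}[(t/(t-1))^{(k+2)/2} + \theta_{n,t}]$. You instead absorb the entire weight $W_n = \mathbbm{1}_{B_k(\sqrt{n})}(1-\norm{x}^2/n)^{(n-k-2)/2} e^{\norm{x}^2/2}$ against the Gaussian measure, apply H\"older once globally, and bound $\norm{W_n}_{L^q(\mu)}$ directly through the elementary estimate $(1-t)^a \le e^{-at}$. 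The computation $\beta_n = 1 - q(k+2)/n > 3/4$, hence $\int W_n^q\, d\mu \leq \beta_n^{-k/2} \leq (4/3)^{k/2}$, is right, and it both avoids the split and avoids appealing to Corollary \ref{equator 2}. Your approach also exposes the ``constant tends to $1$" refinement of Corollary \ref{p>1 part 2} quite transparently, since $\beta_n \to 1$ and $a_{n,k}b_{n,k} \to 1$. The only marginal gap is presentational: the claim that $a_{n,k}b_{n,k}$ is bounded above for all $n$ in the stated range deserves a one-line justification (e.g.\ $b_{n,k}<1$ and $a_{n,k}$ is a convergent sequence, hence bounded over $n>k$), which you gesture at but do not spell out.
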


\begin{proof}
Fix $g \in L^p(\mathbb{R}^k, \mu)$, where $p \in \mathbb{R}_{>1}$. Also, let $t \in \mathbb{N}_{>1}$. Using Theorem \ref{generalization of Theorem of Sengupta} instead of Sengupta \cite[Proposition 4.1]{Sengupta}, we can follow the same steps leading up to (\ref{disintegration}) to see that  $\int_{S^{n-1}(\sqrt{n})} \abs{g} d\bar{\sigma}_n$ is equal to
\begin{align}
    &\int_{\norm{x}^2 \leq \frac{n}{t}} \frac{a_{n, k} b_{n, k}\abs{g(x)}}{{(\sqrt{2 \pi})}^k} \left(1 - \frac{\norm{x}^2}{n} \right)^{\frac{n-k-2}{2}} d\lambda(x)   \nonumber \\
     + &\int_{\frac{n}{t} < \norm{x}^2 \leq n} \frac{a_{n, k} b_{n, k}\abs{g(x)}}{{(\sqrt{2 \pi})}^k} \left(1 - \frac{\norm{x}^2}{n} \right)^{\frac{n}{2}} \frac{1}{\left(1 - \frac{\norm{x}^2}{n} \right)^{\frac{k+2}{2}}} d\lambda(x),  \label{pre-Holder}
\end{align}
where $a_{n,k} =  \frac{\Gamma\left(\frac{n}{2}\right)}{\Gamma\left(\frac{n - k}{2}\right) \cdot \left(\frac{n-k}{2}\right)^{\frac{k}{2}}}$ and $b_{n, k} = {\left( 1 - \frac{k}{n} \right)}^{\frac{k}{2}}$ are the same constants that appear in \eqref{disintegration}. 

Note that $${\left(1 - \frac{\norm{x}^2}{n} \right)^{-\frac{k+2}{2}}} \leq \left(\frac{t}{t-1}\right)^{\frac{k+2}{2}} \text{ whenever } \norm{x}^2 \leq \frac{n}{t}.$$

Also, $\left(1 - \frac{\norm{x}^2}{n}\right)^{\frac{n}{2}} \mathbbm{1}_{\norm{x}^2 \leq n}$ is at most equal to $e^{-\frac{\norm{x}^2}{2}}$ for all $x \in \mathbb{R}^k$. Noting that $b_{n, k} < 1$ for all $n \in \mathbb{N}_{>k}$, the first summand in (\ref{pre-Holder}) is at most
$$\left(\frac{t}{t-1}\right)^{\frac{k+2}{2}} \frac{a_{n,k}}{(2\pi)^{\frac{k}{2}}} \int_{\mathbb{R}^k} \abs{g({x})} e^{-\frac{\norm{{x}}^2}{2}} d\lambda({x})$$
for all $n \in \mathbb{N}_{>k}$. Writing this integral as a Gaussian expected value, and then using Jensen's inequality, we have:

\begin{align}\label{first term for Lp}
   I_1 \leq a_{n,k}  \left(\frac{t}{t-1}\right)^{\frac{k+2}{2}} \norm{g}_{L^p(\mathbb{R}^k, \mu)} \text{ for all } n \in \mathbb{N}_{>k},
\end{align}
where $I_1$ is the $\text{first summand in \eqref{pre-Holder}}$, and $\norm{g}_{L^p(\mathbb{R}^k, \mu)} = (\mathbb{E}_{\mu}(\abs{g}^p))^{\frac{1}{p}}$.

Let $q \in \mathbb{R}_{>1}$ be such that $\frac{1}{p} + \frac{1}{q} = 1$. Then we can write the second summand in \eqref{pre-Holder} as follows:
\begin{align}\label{second summand 0}
    a_{n, k} b_{n, k}\int_{\frac{n}{t} < \norm{x}^2 \leq n} \frac{\abs{g(x)}}{{(\sqrt{2 \pi})}^\frac{k}{p}} \left(1 - \frac{\norm{x}^2}{n} \right)^{\frac{n}{2p}} \cdot \frac{1}{(\sqrt{2\pi})^{\frac{k}{q}}} {\left(1 - \frac{\norm{x}^2}{n} \right)^{\frac{n}{2q} - \frac{k+2}{2}}} d\lambda(x).
\end{align}

Note that $b_{n, k} < 1$ for all $n \in \mathbb{N}_{>k}$. By H\"older's inequality applied to the functions $x \mapsto \frac{\abs{g(x)}}{{(\sqrt{2 \pi})}^\frac{k}{p}} \left(1 - \frac{\norm{x}^2}{n} \right)^{\frac{n}{2p}}$ and $x \mapsto \frac{1}{(\sqrt{2\pi})^{\frac{k}{q}}} {\left(1 - \frac{\norm{x}^2}{n} \right)^{\frac{n}{2q} - \frac{k+2}{2}}}$ (on the domain $\{x \in \mathbb{R}^k: \frac{n}{t} < \norm{x}^2 < n\}$ equipped with its Lebesgue measure), the expression in \eqref{second summand 0} is at most equal to the following \footnote{An anonymous referee has pointed out that one could also apply H\"older's inequality to the functions $x \mapsto \abs{g(x)}$ and $x \mapsto \left(1 - \frac{\norm{{x}}^2}{n} \right)^{- \frac{k + 2}{2}}$, on the same domain but with the measure given by $d\nu(x) = \frac{1}{(\sqrt{2\pi})^k} \left(1 - \frac{\norm{x}^2}{n} \right)^{\frac{n}{2}} d\lambda(x)$.}: 
\begin{align*}
   &a_{n,k} \left(\int_{\substack{{x} \in \mathbb{R}^{k} \\ \frac{n}{t} < \norm{{x}}^2 \leq n}} \abs{{g({x})}}^p \cdot \frac{1}{{(\sqrt{2 \pi})}^{k}} \left(1 - \frac{\norm{{x}}^2}{n} \right)^{\frac{n}{2}} d\lambda({x}) \right)^{\frac{1}{p}} \\
  \times &\left(\int_{\substack{{x} \in \mathbb{R}^{k} \\ \frac{n}{t} < \norm{{x}}^2 \leq n}} \frac{1}{{(\sqrt{2 \pi})}^{k}} \left(1 - \frac{\norm{{x}}^2}{n} \right)^{\left(\frac{n}{2q} - \frac{k + 2}{2}\right)\cdot q} d\lambda({x}) \right)^{\frac{1}{q}}.\\
\end{align*}

The first term in this product is at most $a_{n,k}({\mathbb{E}_{\mu}(\abs{g}^p)})^{\frac{1}{p}}$. Also, the integrand in the second term in this product is at most $\left(1 - \frac{\norm{{x}}^2}{n} \right)^{\frac{n}{4}}$ for all $n \in \mathbb{N}_{>2(k + 2)q}$. To summarize, if $I_2$ is the \text{second summand in \eqref{pre-Holder}}, then we have:

\begin{align}\label{second term for Lp}
   I_2 \leq a_{n,k} \norm{g}_{L^p(\mathbb{R}^k, \mu)} \cdot  \theta_{n, t} \text{ for all } n \in \mathbb{N}_{>(k+2)q},
\end{align}
where 
\begin{align}\label{theta}
    \theta_{n,t} = \left(\int_{\substack{{x} \in \mathbb{R}^{k} \\ \frac{n}{t} < \norm{{x}}^2 \leq n}} \frac{1}{{(\sqrt{2 \pi})}^{k}} \left(1 - \frac{\norm{{x}}^2}{n} \right)^{\frac{n}{4} } d\lambda({x}) \right)^{\frac{1}{q}}.
\end{align}

Combining \eqref{first term for Lp} and \eqref{second term for Lp}, we get:

\begin{align}\label{pre Lp inequality}
  \int_{S^{n-1}(\sqrt{n})} \abs{g} d\bar{\sigma}_n \leq & a_{n, k} \left[ \left(\frac{t}{t-1}\right)^{\frac{k+2}{2}} + \theta_{n, t}\right] \norm{g}_{L^p(\mathbb{R}^k, \mu)} \end{align}
\text{ for all } $n \in \mathbb{N}_{>4(k+2)q}$ and $t \in \mathbb{N}_{>1}$.

Here $a_{n,k} =  \frac{\Gamma\left(\frac{n}{2}\right)}{\Gamma\left(\frac{n - k}{2}\right) \cdot \left(\frac{n-k}{2}\right)^{\frac{k}{2}}}$ and $\theta_{n,t}$ is as in \eqref{theta}. Note that $\lim_{n \rightarrow \infty} a_{n, k} = 1$, and by Corollary \ref{equator 2}, $\lim_{n \rightarrow \infty} \theta_{n, t} = 0$ for all $t \in \mathbb{N}$. Thus, for any $t \in \mathbb{N}$, the coefficient of $\norm{g}_{L^p(\mathbb{R}^k, \mu)}$ in \eqref{pre Lp inequality} is uniformly bounded above, by (say) $C_p$. This completes the proof of the theorem.
\end{proof}

Focusing on the coefficient in \eqref{pre Lp inequality}, we note that given $\epsilon \in \mathbb{R}_{>0}$ we can choose $t \in \mathbb{N}_{>1}$ large enough for which the following inequality holds. $$\left( \frac{t}{t - 1} \right)^{\frac{k+2}{2}} < 1 + \frac{\epsilon}{2}.$$

For this $t$, using Corollary \ref{equator 2}, we can choose an $n_p \in \mathbb{N}$ large enough such that $\theta_{n, t} < \frac{\epsilon}{2}$ for all $n \in \mathbb{N}_{>n_p}$. Since $\lim_{n \rightarrow \infty} a_{n, k} = 1$, we can also ensure that the $n_p$ we choose is large enough such that $a_{n, k} < 1 + \epsilon$ for all $n \in \mathbb{N}_{>n_p}$. Combining all of this, \eqref{pre Lp inequality} yields the following useful corollary: we are able to bound the ratio of the spherical integral and the Gaussian $L^p$ norm by a constant as close to $1$ as we want, with the price of having to go to a potentially higher dimension to observe this phenomenon. 

\begin{corollary}\label{p>1 part 2}
For each $p \in \mathbb{R}_{>1}$ and $\epsilon \in \mathbb{R}_{>0}$, there is an $n_p \in \mathbb{N}$ such that 
\begin{align}\label{L2 inequality part 2}
    \int_{S^{n-1}(\sqrt{n})} \abs{g} d\bar{\sigma}_n \leq (1 + \epsilon)[\mathbb{E}_\mu(\abs{g}^p)]^{\frac{1}{p}} \text{ for all } g \in L^p(\mathbb{R}^k, \mu) \text{ and } n \in \mathbb{N}_{> n_p}.
\end{align}
\end{corollary}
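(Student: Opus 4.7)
The plan is to extract everything from inequality \eqref{pre Lp inequality} of Theorem \ref{p>1}, which already gives
$$\int_{S^{n-1}(\sqrt{n})} \abs{g} d\bar{\sigma}_n \leq a_{n, k}\left[\left(\frac{t}{t-1}\right)^{\frac{k+2}{2}} + \theta_{n, t}\right]\norm{g}_{L^p(\mathbb{R}^k, \mu)}$$
for every $n \in \mathbb{N}_{>4(k+2)q}$ and every $t \in \mathbb{N}_{>1}$. The parameter $t$ is still free here, and the idea is that by taking $t$ large we can make the geometric factor $\left(\tfrac{t}{t-1}\right)^{\frac{k+2}{2}}$ arbitrarily close to $1$, and then by taking $n$ large (depending on this $t$) we can make both $\theta_{n, t}$ and the gap $\abs{a_{n, k} - 1}$ arbitrarily small.

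Concretely, given $p \in \mathbb{R}_{>1}$ and $\epsilon \in \mathbb{R}_{>0}$, the first step is to pick $t = t(k, \epsilon) \in \mathbb{N}_{>1}$ large enough that $\left(\tfrac{t}{t-1}\right)^{\frac{k+2}{2}} < 1 + \tfrac{\epsilon}{3}$; this is possible since the expression tends to $1$ as $t \to \infty$. With this $t$ fixed, I would invoke Corollary \ref{equator 2} to deduce $\lim_{n \to \infty} \theta_{n, t} = 0$, so there is some $n_1 \in \mathbb{N}$ beyond which $\theta_{n, t} < \tfrac{\epsilon}{3}$. Separately, Stirling's formula (already noted in the proof of Theorem \ref{Poincare's theorem} in Section \ref{computational proof}) gives $\lim_{n \to \infty} a_{n, k} = 1$, so there is an $n_2 \in \mathbb{N}$ beyond which $a_{n, k} < 1 + \tfrac{\epsilon}{3(1 + \epsilon)}$ (or any convenient bound that will make the product work).

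Setting $n_p \defeq \max\{n_1, n_2, 4(k+2)q\}$, the bracketed factor in \eqref{pre Lp inequality} is then strictly less than $1 + \tfrac{2\epsilon}{3}$ for $n > n_p$, and multiplying by $a_{n, k} < 1 + \tfrac{\epsilon}{3(1+\epsilon)}$ yields a bound strictly less than $1 + \epsilon$. Applied to \eqref{pre Lp inequality}, this gives the desired inequality \eqref{L2 inequality part 2} for all $g \in L^p(\mathbb{R}^k, \mu)$ and all $n \in \mathbb{N}_{>n_p}$.

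There is no real obstacle here: the corollary is essentially a quantitative repackaging of the three limits $\left(\tfrac{t}{t-1}\right)^{\frac{k+2}{2}} \to 1$ (as $t \to \infty$), $\theta_{n, t} \to 0$ (as $n \to \infty$, from Corollary \ref{equator 2}), and $a_{n,k} \to 1$ (as $n \to \infty$, from Stirling), applied carefully and in the right order to \eqref{pre Lp inequality}. The only thing to be a little careful about is the order of quantifiers: $t$ must be chosen first depending on $\epsilon$ and $k$, and then $n_p$ is chosen depending on this $t$ (and on $p$ through the threshold $4(k+2)q$).
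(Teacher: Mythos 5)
Your proposal is essentially the paper's own argument: pick $t$ large enough to make $\left(\tfrac{t}{t-1}\right)^{(k+2)/2}$ close to $1$, then invoke Corollary \ref{equator 2} and Stirling to make $\theta_{n,t}$ and $a_{n,k} - 1$ small for large $n$, and feed all three into \eqref{pre Lp inequality}. Your bookkeeping with $\epsilon/3$ and $\epsilon/(3(1+\epsilon))$ is in fact slightly tighter than the paper's informal $\epsilon/2,\ \epsilon/2,\ \epsilon$ choices (which, read literally, only give $(1+\epsilon)^2$ and tacitly require a final rescaling of $\epsilon$), but the underlying idea and the order of quantifiers are the same.
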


Using Theorem \ref{p>1}, the condition \eqref{double} of Theorem \ref{sufficient condition for integrability} is easily verified for all functions in $L^p(\mathbb{R}^k, \mu)$, where $p \in \mathbb{R}^k$. Using that theorem and Theorem \ref{Poincare's theorem}, we obtain our main limiting result for spherical integrals.

\begin{theorem}\label{most general}
If $\mu$ is the standard Gaussian measure on $\mathbb{R}^k$ and $f \in L^p(\mathbb{R}^k, \mu)$ for some $p \in \mathbb{R}_{>1}$,  then the nonstandard extension ${^*}f$ is $S$--integrable on $S^{N-1}(\sqrt{N})$ for all $N > \mathbb{N}$. As a consequence, the function $f$ is integrable on $(S^{n-1}(\sqrt{n}), \bar{\sigma}_n)$ for all large $n \in \mathbb{N}$, and
    $$\lim_{m \rightarrow \infty} \lim_{n \rightarrow \infty} \int_{S^{n-1}(\sqrt{n}) \cap \{\abs{f} \geq m\}} \abs{f} d{\bar{\sigma}}_n = 0.$$
    
Furthermore, the spherical integrals of $f$ satisfy the following limiting behavior:
$$\lim_{n \rightarrow \infty} \int_{S^{n-1}(\sqrt{n})} f d\bar{\sigma}_n = \int_{\mathbb{R}^k} f d\mu.$$

This limit of spherical integrals can be written as a single spherical integral (over an infinite sphere) $\int_{S^{N-1}(\sqrt{N})} \st({^*}f) dL\bar{\sigma}_N$ for any hyperfinite $N$.
\end{theorem}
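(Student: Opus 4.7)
The plan is to verify the double-limit hypothesis \eqref{double} of Theorem \ref{sufficient condition for integrability} for any $f \in L^p(\mathbb{R}^k, \mu)$, with the inequality from Corollary \ref{p>1 part 2} as the key tool. Once that tail condition is established, $S$--integrability of ${^*}f$ over $S^{N-1}(\sqrt{N})$ for every $N > \mathbb{N}$ is an immediate by-product of the proof of Theorem \ref{sufficient condition for integrability} (via Theorem \ref{S-integrable TFAE}\ref{S2}), and the limiting formula then follows by matching the Loeb integral with the Gaussian integral through Poincar\'e's theorem.

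To carry out the verification, I would apply Corollary \ref{p>1 part 2} to the function $|f| \mathbbm{1}_{\{|f| \geq m\}}$, which lies in $L^p(\mathbb{R}^k, \mu)$ since $|f|^p \mathbbm{1}_{\{|f| \geq m\}} \leq |f|^p \in L^1(\mu)$. Fixing any $\epsilon \in \mathbb{R}_{>0}$ and letting $n_p$ be as in that corollary, we obtain
\begin{align*}
\int_{S^{n-1}(\sqrt{n})} |f| \mathbbm{1}_{\{|f| \geq m\}} d\bar{\sigma}_n \leq (1+\epsilon)\left[\mathbb{E}_\mu\!\left(|f|^p \mathbbm{1}_{\{|f| \geq m\}}\right)\right]^{1/p}
\end{align*}
for every $m \in \mathbb{N}$ and every $n \geq n_p$. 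Taking $\limsup_{n \to \infty}$ and then $\lim_{m \to \infty}$ on the right-hand side, the Gaussian tail $\mathbb{E}_\mu(|f|^p \mathbbm{1}_{\{|f| \geq m\}})$ vanishes by the dominated convergence theorem applied on $(\mathbb{R}^k, \mu)$, so condition \eqref{double} of Theorem \ref{sufficient condition for integrability} holds.

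Next, I would apply Theorem \ref{sufficient condition for integrability} with $(E,\mathcal{E}) = (\mathbb{R}, \mathcal{B}(\mathbb{R}))$, $\Omega_n = S^{n-1}(\sqrt{n})$, and $\nu_n = \bar{\sigma}_n$. This delivers integrability of $f$ on $(S^{n-1}(\sqrt{n}), \bar{\sigma}_n)$ for all large $n$, the tail double-limit in the statement, $S$--integrability of ${^*}f$ on $S^{N-1}(\sqrt{N})$ for each $N > \mathbb{N}$, and the identity
\begin{align*}
\st\!\left(\starint_{S^{N-1}(\sqrt{N})} {^*}f \, d\bar{\sigma}_N\right) = \int_{S^{N-1}(\sqrt{N})} \st({^*}f) \, dL\bar{\sigma}_N.
\end{align*}
Separately, Theorem \ref{Poincare's theorem} combined with Proposition \ref{TFAE} (taking $E = \mathbb{R}$, noting that $\mu$ is a Radon measure on $\mathbb{R}^k$) yields $L\bar{\sigma}_N({^*}B) = \mu(B)$ for every $B \in \mathcal{B}(\mathbb{R}^k)$ and every $N > \mathbb{N}$. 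Hence Corollary \ref{consequences}\ref{integrable} applies and gives $\int_{S^{N-1}(\sqrt{N})} \st({^*}f) \, dL\bar{\sigma}_N = \int_{\mathbb{R}^k} f \, d\mu$.

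Chaining these two identities shows that $\st(\int_{S^{n-1}(\sqrt{n})} f \, d\bar{\sigma}_n)_{n = N} = \int_{\mathbb{R}^k} f \, d\mu$ for every $N > \mathbb{N}$, and the nonstandard characterization of limits (Proposition \ref{limits of sequences}) upgrades this to $\lim_{n \to \infty} \int_{S^{n-1}(\sqrt{n})} f \, d\bar{\sigma}_n = \int_{\mathbb{R}^k} f \, d\mu$, as well as the single-spherical-integral representation. The only real obstacle is the uniform-in-$n$ control of the spherical tails, which is exactly what Corollary \ref{p>1 part 2} supplies; without that inequality one has no obvious way to pass the Gaussian moment bound to arbitrarily high dimensions, and the rest of the argument is a transparent application of the Section \ref{section 3} framework.
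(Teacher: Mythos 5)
Your proof is correct and follows essentially the same route as the paper: verify condition \eqref{double} of Theorem \ref{sufficient condition for integrability} for $L^p(\mu)$ functions via the Section \ref{section 4} inequality (the paper cites Theorem \ref{p>1} directly, while you invoke its immediate Corollary \ref{p>1 part 2} -- the constant is irrelevant here since the Gaussian tail vanishes), and then feed the conclusion into the Section \ref{section 3} framework together with Poincar\'e's theorem. The paper offers only this single sentence of justification before the theorem statement, and your proposal supplies exactly the details it leaves implicit.
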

\end{section}

\appendix\section{The kinetic theory of gases and spherical surface measures}\label{appendix}
This appendix is devoted to the physical motivation behind viewing a high-dimensional spherical integral as a Gaussian mean. We will give an outline of the usual derivation of the Maxwell--Boltzmann distribution (originally discovered by Maxwell in \cite{Maxwell} and improved by Boltzmann in \cite{Boltzmann}), and explain its connection with the problem on limiting spherical integrals studied in this paper. We recommend Chapter 5 of Pauli and Enz \cite{pauli} (which we also roughly follow for our outline) for more details on the underlying physics. 

We work under the assumption that a statistically large number (which we shall denote by $N$) of particles of a monatomic gas are moving randomly in a container of a given volume. Each particle has a mass $m$. We further assume that the velocity of a given particle behaves like a random vector following an isotropic continuous probability density function $f \co \mathbb{R}^3 \to \mathbb{R}$, where the isotropicity of $f$ just means the following:

\begin{align}\label{isotropic}
    \exists ~ g\co \mathbb{R} \to \mathbb{R} \text{ such that } f(v_1, v_2, v_3) = g({v_1}^2 + {v_2}^2 + {v_3}^2) \text{ for all } v_1, v_2, v_3 \in \mathbb{R}.
\end{align}

Newtonian mechanics can be used to postulate that the pressure on any wall of the container is directly proportional to the mean squared speed of the gas particles. Combining this with the ideal gas law, it then follows that the average kinetic energy of the particles should be directly proportional to the temperature $T$ of the system. This is typically described by the following equation, where $\vec{v}_i$ is the velocity of the $i^{\text{th}}$ particle, and $k$ is a constant called the \textit{Boltzmann constant}. Note that the factor of $\frac{3}{2}$ appears in the following in order to make sure that our $k$ agrees with the traditional value of the Boltzmann constant. 

\begin{align}\label{average energy}
    \sum_{i = 1}^{N} \frac{1}{2}m \norm{\vec{v}_i}^2 = \frac{3}{2}kTN, \text{ that is}, ~ \frac{\sum_{i = 1}^{N} \norm{\vec{v}_i}^2}{N} = \frac{3kT}{m}.
\end{align}

We also assume that the three components of the velocity vector of a given particle are independent and identically distributed, with a continuous density function $h \co \mathbb{R} \to \mathbb{R}$ satisfying the following condition.
\begin{align}\label{independent}
     f(v_1, v_2, v_3) = h(v_1)h(v_2)h(v_3) \text{ for all } v_1, v_2, v_3 \in \mathbb{R}.
\end{align}

We define new functions $\psi \co \mathbb{R} \to \mathbb{R}$ and $\phi\co \mathbb{R}_{\geq 0} \to \mathbb{R}$ by the following formulae:

\begin{align*}
\psi(v_i) &\defeq \log(h(v_i)) \text{ for all } v_i \in \mathbb{R}, \text{ and }\\
\phi(v^2) &\defeq \log (g(v^2)) \text{ for all } v \in \mathbb{R}.
\end{align*}

Then $\phi$ and $\psi$ satisfy the following functional equation:
\begin{align}\label{functional equation}
    \phi({v_1}^2 + {v_2}^2 + {v_3}^2) = \psi(v_1) + \psi(v_2) + \psi(v_3).
\end{align}

Assuming that $\phi$ and $\psi$ are sufficiently differentiable, it can be shown that \eqref{functional equation} can be satisfied only if $\phi$ is linear. After some simplifications, we obtain the following.

\begin{align}\label{alpha and C}
    f(v_1, v_2, v_3) = g({v_1}^2 + {v_2}^2 + {v_3}^2) = Ce^{-\alpha ({v_1}^2 + {v_2}^2 + {v_3}^2)},
\end{align}
for some constants $C, \alpha \in \mathbb{R}_{>0}$. 

The constant $C$ is obtained to be $\left( \frac{\alpha}{\pi} \right)^{\frac{3}{2}}$ by integrating both sides of \eqref{alpha and C} and noting that the integral of $f$ is equal to $1$ as $f$ is a probability density function. We then compute the expected value of the square of the speed ${v_1}^2 + {v_2}^2 + {v_3}^2$, and equate it with $\frac{3kT}{m}$ (which comes \eqref{average energy}, using our underlying hypothesis of $N$ being statistically large so that the mean of the individual particles' squared speed should be very close to the theoretical expected value -- more precisely, one can let $N \rightarrow \infty$ and use the strong law of large numbers). From that, we find $\frac{3}{2\alpha} = \frac{3kT}{m}$, so that $\alpha = \frac{m}{2kT}$. We thus obtain the famous Maxwell--Boltzmann distribution for velocity:

\begin{align}\label{Maxwell-Boltzmann equation}
    f(v_1, v_2, v_3) = \left(\frac{m}{2\pi k T} \right)^{\frac{3}{2}}e^{-\frac{m}{2kT}({v_1}^2 + {v_2}^2 + {v_3}^2)} \text{ for all } v_1, v_2, v_3 \in \mathbb{R}.
\end{align}

From the above formula, Maxwell and Boltzmann proceeded to derive probability distributions of other important functions (such as speed) of velocity. These distributions are heavily used in statistical mechanics and thermodynamics.

The problem of statistically estimating the behavior of a function of the velocity of a random gas particle can be reinterpreted in a useful way with the notion of surface area measures on Euclidean spheres. For simplicity of terms we let $N_0 \defeq 3N$, and renormalize the constants in equation \eqref{average energy} (by assuming that ${kT} = {m}$). Writing $\vec{v}_i = (v_{i,x}, v_{i,y}, v_{i,z}) \in \mathbb{R}^3$, we then get:
\begin{align}\label{Gas equation 2}
    \sum_{i = 1}^{N} \left({v_{i,x}}^2 + {v_{i,y}}^2 + {v_{i,z}}^2\right) = N_0.
\end{align}

Hence $(\vec{v}_1, \ldots, \vec{v}_N)$ is a vector in $\mathbb{R}^{N_0}$ of norm $\sqrt{N_0}$. In other words, $(\vec{v}_1, \ldots, \vec{v}_N)$ is an element of $S^{N_0-1}(\sqrt{N_0})$. Since we do not have any information about the motion of these particles other than what is contained in equation \eqref{average energy}, it is reasonable to assume that the value of $(\vec{v}_1, \ldots, \vec{v}_N)$ at a given time is a ``random point'' of $S^{N_0-1}(\sqrt{N_0})$. The surface area measure $\bar{\sigma}_S$ for a sphere $S$ serves as a notion of a uniform probability measure on $S$. Thus we can make the observation regarding $(\vec{v}_1, \ldots, \vec{v}_N)$ being a random point of $S^{N_0-1}(\sqrt{N_0})$ more precise by postulating that the probability that $(\vec{v}_1, \ldots, \vec{v}_N)$ lies in a Borel set $B \subseteq S^{N_0-1}(\sqrt{N_0})$ is given by $\bar{\sigma}_{S^{N_0-1}(\sqrt{N_0})}(B)$. 

Since we are working under the assumption that the number of particles is very large, the probability that the first component of the velocity of the first particle, and hence of a random particle (by symmetry), is in a Borel set $B_1 \subseteq \mathbb{R}^1$, should be given by $\lim_{N_0 \rightarrow \infty}\bar{\sigma}((B_1 \times \mathbb{R}^{N_0-1}) \cap S^{N_0-1}(\sqrt{N_0}))$. Also, the expected or mean value of the first component of its velocity should be given by the following integral: $$\lim_{N_0 \rightarrow \infty} \int_{S^{N_0-1}(\sqrt{N_0})} v_{1,x} d\bar{\sigma}(v_{1,x}, v_{1,y}, v_{1,z}, \ldots, v_{N,x}, v_{N,y}, v_{N,z}).$$ 

Similarly, the expected value of speed would be given by the limit of the integrals of $\sqrt{v_{1,x}^2 + v_{1,y}^2 + v_{1,z}^2}$. In fact, the limit of integrals of any finite-dimensional function on these spheres can be interpreted as the expected value of some function of velocities of randomly chosen particles in our gaseous system. 

If there were a way to directly compute these limits, then we would be able to evaluate various probabilities associated with values taken by the velocity components, as well as recover the expected values of many functions of velocities of the particles. Furthermore, such a derivation would have the benefit of being less circular as we would not be making any assumptions on the nature (or even existence) of the density $f$ that was derived in \eqref{Maxwell-Boltzmann equation}. 

Thus the problem of generalizing Theorem \ref{Poincare's theorem} to the largest class of functions possible is intimately connected to, and has implications on our understanding of the kinetic theory of gases. Furthermore, the fact that there already exist distributions for functions of velocity such as speed (which, being equal to $\sqrt{{v_1}^2 + {v_2}^2 + {v_3}^2}$, is clearly not a bounded function) suggests that \eqref{Poincare's theorem} should, in principle, be generalizable to at least some unbounded functions, which in turn makes the problem of finding all such functions naturally appealing. 

Mathematically, \eqref{Poincare limit} tells us that the Gaussian measure $\mu$ is well-equipped to measure the limiting expected value of any bounded measurable function of a given collection of coordinates. In some sense, it retains all probabilistic information of the manner in which such functions behave over these spheres in the large-$N$ limit. From this point of view as well, it becomes a natural question to find out for which actual functions does it retain all such information. 

Nonstandard analysis gives access to hyperfinite natural numbers which provide a natural model for statistically large number of particles. The probability that the velocity of a random particle lies in some set could actually be thought of as the uniform surface area of the portion of a hyperfinite-dimensional sphere corresponding to this set.

\section*{Acknowledgments}
The author thanks Professor Ambar Sengupta for introducing the problem and participating in numerous mathematical discussions. The author is grateful to Professor Karl Mahlburg for feedback on the paper, and Professor Renling Jin for many fruitful discussions on nonstandard analysis. The author thanks Professor Arnab Ganguly for bringing to attention (while working on a different project) some properties of the density functions used in Theorem \ref{Riemann-Lebesgue}. The comments from an anonymous referee were helpful in fixing some errors and making the exposition more well-rounded.

\bibliography{References}
\bibliographystyle{amsplain}

\end{document}